\documentclass[reqno, 12pt]{amsart}
\usepackage{amsmath,amsthm,amsfonts,amssymb}

\date{}
\setlength{\textwidth}{15truecm}
\setlength{\textheight}{24truecm}
\setlength{\oddsidemargin}{0cm}
\setlength{\evensidemargin}{0cm}
\setlength{\topmargin}{-25pt}

\newtheorem{theorem}{Theorem}[section]
\newtheorem{lemma}[theorem]{Lemma}
\newtheorem{corollary}[theorem]{Corollary}
\newtheorem{remark}[theorem]{Remark}
\newtheorem{proposition}[theorem]{Proposition}

\numberwithin{equation}{section}

\begin{document}

\centerline{{\bf  Kantorovich problems and conditional measures depending on a parameter}}

\vskip .2in

\centerline{{\bf Vladimir I. Bogachev$^{a,b,}$\footnote{Corresponding author, vibogach@mail.ru.},
Ilya I. Malofeev$^{a}$}}

\vskip .2in

$^{a}$ Department of Mechanics and Mathematics,
Moscow State University,
119991 Moscow, Russia

$^{b}$  National Research University Higher School of Economics, Moscow, Russia

\vskip .2in

{\bf Abstract.}
We study measurable dependence of measures on a parameter in the following two
classical problems:   constructing conditional measures and the Kantorovich
optimal transportation. For parametric families of measures and mappings
we prove the existence of conditional measures measurably depending on the parameter.
A~particular emphasis is made
on the Borel measurability (which cannot be always achieved).
Our second main result  gives sufficient conditions for the Borel measurability
of optimal transports and transportation costs with respect to a parameter in the case where
marginal measures and cost functions depend on a parameter.
As a corollary we obtain the Borel measurability with respect to the
parameter for disintegrations of optimal plans.
Finally, we show that the Skorohod parametrization of measures by mappings can be also
made measurable with respect to a parameter.

\vskip .1in

Keywords: Kantorovich problem, conditional measure, weak convergence,
measurable dependence on a parameter, Skorohod representation

\vskip .1in

AMS MSC 2010: 28C15, 60G57, 46G12

\section{Introduction}

We recall that,  given two probability spaces
$(X,\mathcal{B}_X,\mu)$ and $(Y,\mathcal{B}_Y, \nu)$ and a nonnegative
$\mathcal{B}_X\otimes \mathcal{B}_Y$-measurable function $h$
on $X\times Y$ (called a cost function), the  associated Kantorovich problem is to find
the infimum of the integral
$$
I_h(\sigma):=\int h\, d\sigma
$$
over all probability measures  $\sigma$ on $\mathcal{B}_X\otimes \mathcal{B}_Y$ with projections
$\mu$ and $\nu$ on the factors.
This infimum is denoted by
$$
K_h(\mu,\nu)
$$
and called the transportation cost for $h,\mu, \nu$. If this infimum
is attained (is a minimum, which happens under broad assumptions),
then the minimizing measures are called optimal measures (and
also optimal plans or optimal transports). The measures $\mu$ and $\nu$ are
called marginal distributions. There is an extensive literature on this
subject, see, e.g.,
\cite{AG}, \cite{BK}, \cite{GM}, \cite{RR}, \cite{V03}, and~\cite{V}.
This paper was motivated by several questions posed by Sergey Kuksin about measurable
dependence of Kantorovich optimal transportation plans on a parameter in optimal transportation
problems depending on a parameter.

Suppose now that $(T,\mathcal{T})$ is a measurable space and for each $t$
we have marginal probability measures $\mu_t$ and $\nu_t$ (which depend on $t$
measurably in the sense that the functions $t\mapsto \mu_t(A)$
 are $\mathcal{T}$-measurable for all $A\in \mathcal{B}_X$ and similarly for~$\nu_t$)
 and that also the cost function depends on the parameter~$t$, i.e.,
 $$
 h\colon T\times X\times Y\to [0,+\infty)
 $$
is a $\mathcal{T}\otimes\mathcal{B}_X\otimes\mathcal{B}_Y$-measurable
function.
We set
$$
h_t(x,y):=h(t,x,y).
$$
Thus, we obtain a Kantorovich problem with a parameter. Dependence on a parameter
appears even for a single cost function if only marginal distributions  depend on~$t$.
The question is whether the infimum depends measurably on $t$ and there are optimal
plans $\sigma_t$ measurably depending on $t$.

Several results have already been obtained in this situation.
Villani \cite{V} considered the situation where only the marginal
distributions depend on a parameter (and are Borel measures on Polish spaces),
but the cost function does not.
  Dedecker, Prieur and Raynaud De Fitte  \cite{DePR} studied the case
  of metric-type cost functions (such that $h(x,y)=\sup |u(x)-u(y)|$,
  where
  $\sup$ is taken over bounded continuous functions $u$ with $|u(x)-u(y)|\le h(x,y)$)
  on rather general spaces (including completely regular
  Souslin spaces) and established
  the existence of a measurable selection of an optimal measure
   and the measurability of the Kantorovich
  minimum, however, this measurability is with respect
  to the $\sigma$-algebra of universally measurable sets, not with respect to the Borel
  $\sigma$-algebra. Similar results are also contained
  in  \cite[Sections~3.4 and~7.1]{CRFV}.
  Zhang~\cite{Zhang} gave a result for continuous
  cost functions on Polish spaces $X$ and $Y$ and an arbitrary measurable space~$T$,
  but the justification contains a gap and the really proved fact is this: if we consider
  the space $M=C(X\times Y)$ with the Borel $\sigma$-algebra corresponding to the metric
  $$
  d_M(f,g)=\sum_{n=1}^\infty 2^{-n} \min(1, \sup_{z\in B_n} |f(z)-g(z)|),
  $$
  where $\{B_n\}$ is a fixed sequence of increasing balls with the union $X\times Y$,
  and to every triple $(h,\mu,\nu)$ with a nonnegative function $h\in M$ we associate
  the set ${\rm Opt}(h,\mu,\nu)$ of all optimal measures, then
  there is a selection of an optimal measure measurable with respect to the $\sigma$-algebras
  $\mathcal{B}(M)\otimes \mathcal{B}(\mathcal{P}(X))\otimes \mathcal{B}(\mathcal{P}(Y))$ and
  $\mathcal{B}(\mathcal{P}(X\times Y))$. However, this does not imply the measurability
  claimed in~\cite{Zhang} (the measurability with respect to~$\mathcal{T}$
  for a general $\sigma$-algebra~$\mathcal{T}$), because the mapping $t\mapsto h(t,\cdot, \cdot)$ can fail
  to be measurable with respect to $\mathcal{T}$ and $\mathcal{B}(M)$ under the only assumption
  that $h$ is measurable on $T\times X\times Y$. The point is that for a noncompact space $Z$
  the Borel $\sigma$-algebra of the space $C_b(Z)$ with its sup-norm is not generated
  by evaluation functionals $z\mapsto f(z)$ (see Remark~\ref{rem-Zh} below). A~consequence of this in the situation
  of \cite{Zhang} is that the assumed measurability of the cost function is not sufficient
  for the applicability of the established selection result. However, it will be shown below
  in Theorem~\ref{tmain2}
  that the main result of \cite{Zhang} is valid. Moreover, we show that optimal transports
  can be made Borel measurable with respect to the parameter for lower semicontinuous cost
  functions in place of continuous ones, provided that $T$ is a Souslin space with its
  Borel $\sigma$-algebra.

  In the study of optimal plans one often deals with conditional measures. It is, of course, a
  question of independent interest to study conditional measures depending on a parameter
  (and this question was also suggested to us by Sergey Kuksin).
  The general framework for conditional measures is this: given a measure $\mu$
  on a space $X$ and a measurable mapping $f$ of $X$ onto another measurable space~$Y$, we are looking for
  measures $\mu^y$ concentrated on the level sets $f^{-1}(y)$ for $y\in Y$ such that $\mu$ has the form
  $$
\mu=  \int_Y \mu^y\, \nu(dy) ,
  $$
  where $\nu=\mu\circ f^{-1}(dy)$ is the image of $\mu$ under $f$ (or some other natural measure on~$Y$).
  Below we recall a precise definition.
  Again, once $\mu$ and $f$ depend on a parameter $t$, the question is whether one can pick
  conditional measures $\mu^y_t$ measurably depending on~$t$. A positive result was obtained
  in \cite{Malofeev} (where a sketch of the proof was given),
  but, as above, this result is in terms of measurability with respect to the extensions
  of Borel $\sigma$-algebras generated by Souslin sets. Below (see Theorem~\ref{t1})
   we provide all technical details for a more general result and complement this
  result by sufficient conditions for the Borel measurability (Theorem~\ref{t2}).
    Moreover, the existence of
  jointly (i.e., in both variables)
  Borel measurable conditional measures depending on a parameter is shown (see Proposition~\ref{p1})
  to be
  equivalent to the existence of jointly Borel measurable right inverse mappings, similarly
  to the result of Blackwell and Ryll-Nardzewski \cite{BR} in the
   case of measures and mappings without parameters.
   It is worth noting that although sets from
   the $\sigma$-algebra generated by Souslin sets remain measurable with respect to all Borel measures,
   their weak point   is that continuous images (say, projections)
    of such sets can fail to be measurable. This is one of motivations to desire
        the Borel measurability.

   Both problems (dependence on a parameter for optimal plans and conditional  measures)
   have some common features and are strongly connected with measurable choice theorems.
   It will be more convenient to start with conditional measures, which is done in Section~3.
In    Section~4 we discuss optimal plans and formulate our main results, which are proved
in Section~5 along with a number of auxiliary results.

Finally, in Section~6 we consider along the same lines the classical result going back
to Skorohod and giving a parametrization of Borel probability measures $\mu$ on a Polish
space $X$ by Borel mappings $\xi_\mu\colon [0,1]\to X$ such that $\mu$ is the image
of Lebesgue measure $\lambda$ under~$\xi_\mu$ and measures $\mu_n$ converge weakly
to $\mu$ if and only if the mappings $\xi_{\mu_n}$ converge to $\xi_\mu$ almost surely.
We show that there is a version of $\xi_\mu$ such that the function $(\mu,t)\mapsto \xi_\mu(t)$
is Borel measurable on $[0,1]\times \mathcal{P}(X)$. It follows that for any family
of measures $\mu_\omega$ measurably depending on a parameter~$\omega$, the function
$(\omega,t)=\xi_{\mu_\omega}(t)$ is jointly Borel measurable.

\section{Notation and terminology}

We shall consider Borel measures on complete separable metric spaces and in some
results on Souslin spaces. So we briefly recall these concepts and some related objects.

Let $X$ be a topological space. Its Borel $\sigma$-algebra, denoted by~$\mathcal{B}(X)$, is the smallest $\sigma$-algebra
containing all open sets. A~real function $f$ on $X$ is called
Borel measurable if the sets $\{x\colon f(x)<c\}$ are Borel for all~$c$.
A~mapping $f$ from $X$ to a topological space $Y$ is called Borel measurable if $f^{-1}(B)$ is a Borel set
for every Borel set $B\subset Y$. For $Y=\mathbb{R}$ this is equivalent to the aforementioned definition.

The space of bounded continuous functions on~$X$ with its sup-norm
is  denoted by $C_b(X)$.
The space of bounded Borel measurable functions with the same norm is denoted by~$B_b(X)$.

If $(T,\mathcal{T})$ is a measurable space (i.e., $\mathcal{T}$ is a $\sigma$-algebra),
then a mapping $f\colon T\to X$ is called $\mathcal{T}$-measurable (or
$(\mathcal{T},\mathcal{B}(X))$-measurable) if $f^{-1}(B)\in \mathcal{T}$
for all $B\in\mathcal{B}(X)$. The Borel measurability is a particular case of this definition.

A space homeomorphic to a complete separable
metric space is called Polish.
A~Hausdorff space that is the  image of a complete separable
metric space under a continuous mapping is called  Souslin or analytic
(see, e.g., \cite{B07},~\cite{Kech}). If such a mapping can be found one-to-one, then $X$ is called
a Luzin space.
A~Hausdorff space $X$
is completely regular if for every point $x\in X$ and every open set $U$ containing~$x$ there
is a continuous function $f\colon X\to [0,1]$ such that $f(x)=1$ and $f=0$ outside~$U$.

Borel sets in Polish spaces
are Souslin (and even Luzin)
spaces; Borel sets in  Souslin spaces are also Souslin. However, unlike the case of
Borel sets, the complement of a Souslin set $A$ in a Polish space is not always Borel, moreover,
it can be Borel only if $A$ itself is Borel. For this reason, the $\sigma$-algebra $\sigma(\mathcal{S}(X))$
generated by the class $\mathcal{S}(X)$ of all Souslin  sets in $X$ is much larger
than the Borel $\sigma$-algebra
(although its cardinality is the continuum for infinite spaces);
for example, in typical cases it is not countably generated (see \cite[Example~6.5.9]{B07}).

Souslin sets belong to the Lebesgue completion of the Borel $\sigma$-algebra
for every Borel measure on a Souslin space (i.e., they are universally measurable), hence
the same is true for the generated $\sigma$-algebra $\sigma(\mathcal{S}(X))$.
However, this $\sigma$-algebra is not stable under the Souslin operation, unlike
the completion of the Borel $\sigma$-algebra (see \cite[p.~66]{B07}) and unlike
the $\sigma$-algebra
 of universally measurable sets. The images and preimages
of Souslin sets under Borel mappings are Souslin. For Borel sets, only preimages are Borel:
it was shown by Souslin that
the projection of a Borel set in $\mathbb{R}^2$ can fail to be Borel.
Next, the preimages of sets in $\sigma(\mathcal{S}(X))$ under Borel mappings
 are also in~$\sigma(\mathcal{S}(X))$.
This is not true for their images even under continuous mappings: the projection of
the complement of a Souslin set need not belong to   $\sigma(\mathcal{S}(X))$
(for example, the projection of the complement of a Souslin set need not be Lebesgue
measurable).

Borel measures are finite (possibly, signed)
measures on $\mathcal{B}(X)$. A signed Borel measure $\mu$
can be written as $\mu=\mu^{+}-\mu^{-}$, where $\mu^{+}$ and $\mu^{-}$ are mutually
singular nonnegative Borel measures. The measure $|\mu|=\mu^{+}+\mu^{-}$ is called
the total variation of~$\mu$ and the number $\|\mu\|=|\mu|(X)$ is called the total
variation norm or the variation norm. We mostly deal with probability measures.

A Borel measure $\mu$ is called Radon if for every Borel set $B$ and every $\varepsilon>0$
there is a compact set $K_\varepsilon\subset B$ such that $|\mu|(B\backslash K_\varepsilon)<\varepsilon$.
On a Souslin space all Borel measures are Radon.

The image of a measure $\mu$ on $X$
under a measurable mapping $f\colon X\to Y$  is denoted by
$\mu\circ f^{-1}$ and defined by  the equality
$$
(\mu\circ f^{-1})(E)=\mu(f^{-1}(E)), \ E\in \mathcal{B}(Y).
$$

Let $\mathcal{P}(X)$ be the space of all Borel probability measures
on a completely regular space~$X$.
Recall that the weak topology on the whole space $\mathcal{M}(X)$
 of signed Borel measures is generated by duality with
$C_b(X)$, i.e.,  is defined by means of seminorms
$$
\mu\mapsto \biggl|\int_X f\, d\mu\biggr|,
$$
where $f\in C_b(X)$. Throughout the spaces of measures will be
considered with the weak topology and the corresponding Borel structure.

If $X$ is a completely regular Souslin space,
then $\mathcal{M}(X)$ and $\mathcal{P}(X)$ are also completely
regular Souslin spaces; if $X$ is a Polish space, then
$\mathcal{P}(X)$ is also Polish (but $\mathcal{M}(X)$ is not in nontrivial cases)
 and if $X$ is a Luzin space, then so is $\mathcal{P}(X)$. These facts can be found
in  \cite[Chapter~8]{B07} or in \cite[Chapter~5]{B18}.

We shall employ Prohorov's condition for compactness in~$\mathcal{M}(X)$: a~set
$M$ has compact closure in $\mathcal{M}(X)$ if it is bounded in variation
and uniformly tight, which means
that for every $\varepsilon>0$ there is a compact set $K\subset X$ such that
$|\mu|(X\backslash K)\le\varepsilon$ for all~$\mu\in M$. If $X$ is a Polish space, then
this condition is also necessary.

For a completely regular
Souslin space $X$,
a mapping $m\colon\, (\Omega,\mathcal{E}) \to \mathcal{P}(X)$
from a measurable space $(\Omega,\mathcal{E})$ is measurable if and only if
 all functions
$$
\omega\mapsto \int_X \varphi(x)\, m(\omega)(dx), \quad \varphi\in C_b(X)
$$
are  $\mathcal{E}$-measurable.
This is also equivalent to the $\mathcal{E}$-measurability of
all functions
$$
\omega\mapsto \int_X \varphi_n(x)\, m(\omega)(dx)
$$
for any countable family  of functions $\varphi_n\in C_b(X)$ of the form
$\varphi_n=p(f_1,\ldots, f_k)$, where $p$ is a
polynomial on $\mathbb{R}^k$ with rational coefficients and $\{f_j\}\subset C_b(X)$ is
a sequence separating
the points in~$X$ (such sequences exist for all completely regular Souslin spaces,
see \cite[Theorem~6.7.7]{B07}). Recall that any sequence of Borel functions separating
points of a Souslin space generates the Borel $\sigma$-algebra of this space
(see~\cite[Theorem~6.8.9]{B07}).
It is readily verified by the monotone class theorem (see \cite[Theorem~6.7.7]{B07} and \cite{Malofeev})
that this measurability is  equivalent to the
$\mathcal{E}$-measurability of all functions
$$
\omega\mapsto m(\omega)(B), \ B\in \mathcal{B}(X).
$$

Recall that a mapping $\Psi$ from a measurable space
$(T,\mathcal{T})$ to the set of nonempty subsets of a topogical space $X$ is
called measurable if for every open set $U\subset X$ the set
$\{t\colon \Psi(t)\cap U\not=\emptyset\}$ belongs to~$\mathcal{T}$.

The space $\mathcal{K}(X)$ of nonempty compact subsets
of a complete metric space $X$ is equipped with the Hausdorff distance
$$
d_H(A,B)=\inf\{r>0\colon {\rm dist}(a,B)<r, \
{\rm dist}(b,A)<r \ \forall\, a\in A, b\in B\}.
$$
It is known that this space is complete and separable (and is compact if $X$ is compact),
see~\cite{CV}.

\section{Conditional measures depending on a parameter}

We first address the problem of conditional measures. A general discussion can be found
in \cite[Chapter~10]{B07}; see also \cite{AY},
\cite{B10}, \cite{HT}, \cite{H}, \cite{H94}, \cite{Rao}, and~\cite{Tjur}.
Connections between conditional measures and surface measures are considered in~\cite{BM} and~\cite{B17}.

It is known that,  whenever $\mu$ is a  Borel probability measure  on
a Souslin space~$X$
and  $f$ is a Borel mapping from $X$ to a Souslin space $Y$,
the level sets  $f^{-1}(y)$ can be equipped with Borel
 probability measures $\mu^y$,
called conditional measures generated by~$f$, possessing
the following three properties:

1) the measure $\mu^y$ is concentrated on the set $f^{-1}(y)$ for each $y\in f(X)$, i.e.,
$$
\mu^y(f^{-1}(y))=1, \ y\in f(X),
$$

2) the functions
$$
y\mapsto \mu^y(B), \quad B\in\mathcal{B}(X),
$$
are measurable with respect to the $\sigma$-algebra $\sigma(\mathcal{S}(X))$
generated by the class of Souslin sets in~$X$,

3) if  $B\subset X$ and $E\subset Y$ are  Borel sets, then
$$
\mu(B\cap f^{-1}(E))=\int_E \mu^y(B)\, \mu\circ f^{-1}(dy).
$$

Conditional measures with properties 1)--3) are called regular proper conditional
probabilities, the term ``proper'' refers to condition~1).

It should be noted that due to condition 1)
the last equality for all $B$ is equivalent to its special
case with $E=Y$:
$$
\mu(B)=\int_Y \mu^y(B)\, \mu\circ f^{-1}(dy).
$$
Indeed, replacing $B$ by $B\cap f^{-1}(E)$ we have
$\mu^y(B\cap f^{-1}(E))=\mu^y(B)$ if $y\in E$, because $\mu^y$ is concentrated on~$f^{-1}(y)$.
If $y\not\in E$,  then for the same reason $\mu^y(B\cap f^{-1}(E))=0$, since
$f^{-1}(E)\cap f^{-1}(y)=\emptyset$. However, the equivalent formulation with $E$ is sometimes useful.

The equality in condition 3) is equivalent to the following:
for every bounded Borel function $\varphi$
on $X$ and every Borel set $E\subset Y$ we have
$$
\int_{f^{-1}(E)} \varphi\, d\mu=\int_E \int_X \varphi(x)\, \mu^y(dx)\, \mu\circ f^{-1}(dy)
=\int_{f^{-1}(E)} \int_X \varphi(x)\, \mu^{f(u)}(dx)\, \mu(du).
$$
As above, it suffices to have this identity for $E=Y$.

It is known (see \cite{B07}) that  conditional measures are unique in the following sense:
two such collections coincide for all points  $y$ outside a set of measure zero with respect to
the induced measure $\mu\circ f^{-1}$.

If $\mathcal{B}^f=\{f^{-1}(A)\colon A\in \mathcal{B}(Y)\}$ is the $\sigma$-algebra generated by~$f$, then
the function
$$
E(\varphi|\mathcal{B}^f)(u)=\int_X \varphi(x)\, \mu^{f(u)}(dx)
$$
serves as the conditional expectation of $\varphi$ with respect to $\mathcal{B}^f$.

It is possible to modify conditions 1) and 2) as follows:  the Borel measurability of all functions
in~2) can be achieved at the expense of weakening condition~1) by replacing it by the condition that
$\mu^y(f^{-1}(y))=1$ for $\mu\circ f^{-1}$-almost all~$y$. However, in the general case
it is impossible to guarantee  the Borel measurability of all functions $y\mapsto \mu^y(B)$
if the equality $\mu^y(f^{-1}(y))=1$ must hold for each~$y$.
There are counter-examples even in the case where $X$ is a Borel set in $[0,1]$ and $f$ is a smooth
function (see \cite{BR} or \cite[V.~2, p.~430]{B07}). According to~\cite{BR},
if $X$ is a Polish space, the existence of conditional measures $\mu^y$ such that 1) holds and
all functions $y\mapsto \mu^y(B)$ are Borel implies that $f(X)$ is a Borel set.
A~necessary and sufficient condition for the existence of such conditional measures
is this: there exists a mapping $F\colon\, X\to X$ measurable with respect to
$\mathcal{B}^f$ and $\mathcal{B}(X)$ such that $f(F(x))=f(x)$.
If $f$ is surjective, this is equivalent to the existence of a Borel mapping
$g\colon Y\to X$ that is right inverse to~$f$: $f(g(y))=y$. Indeed, since $F$ is
$\mathcal{B}^f$-measurable, it must be of the form $F(x)=g(f(x))$ for some Borel mapping
$g\colon Y\to X$, hence $f(g(f(x)))=f(x)$, whence $f(g(y))=y$ for all $y\in Y$. Conversely,
if such $g$ exists, we can take $F(x)=g(f(x))$.
Some measurability problems connected with conditional measures are discussed in~\cite{Ram}.

Suppose  now that $\mu$ and $f$   depend
measurably on a parameter $z$ belonging to some Souslin space~$Z$. Is it possible to pick
conditional measures $\mu_z^y$ depending measurably on~$(y,z)$?
This question arises naturally in applications,
in particular,  in optimal transportation and parametric statistics
(see, e.g., \cite{Pf}, \cite{BK}, \cite{Ev}, and~\cite{V}).
Some positive results have been recently given in \cite{Malofeev}. Here we reinforce these results
(and also give all details of proofs omitted in~\cite{Malofeev}.

Throughout that  $X, Y, Z$ are assumed to be completely regular Souslin spaces
(in some results certain stronger assumptions are used).

\begin{lemma}\label{lem2.1}
Suppose that $\psi\colon\, X\times Z\to \mathbb{R}$ is a bounded function
measurable with respect to the  $\sigma$-algebra
$\sigma(\mathcal{S}(X))\otimes \sigma(\mathcal{S}(Z))$.
Let $z\mapsto \mu_z$, $X\to  \mathcal{P}(X)$
be Borel measurable or, more generally,
$(\sigma(\mathcal{S}(Z)),\mathcal{B}(\mathcal{P}(X)))$-measurable.
Then the function
$$
h(z)=\int_X \psi(x,z)\, \mu_z(dx)
$$
is $\sigma(\mathcal{S}(Z))$-measurable  on $Z$.
If $\psi$ and $z\mapsto \mu_z$ are Borel measurable, then the function $h$ is also Borel measurable.
\end{lemma}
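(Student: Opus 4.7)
The plan is to reduce via a monotone class argument to indicator functions of measurable rectangles, and then further to the single question of whether $z\mapsto\mu_z(A)$ is $\sigma(\mathcal{S}(Z))$-measurable for each fixed $A\in\sigma(\mathcal{S}(X))$.

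First I would let $\mathcal{H}$ denote the collection of bounded $\sigma(\mathcal{S}(X))\otimes\sigma(\mathcal{S}(Z))$-measurable functions $\psi$ on $X\times Z$ for which $h_\psi(z)=\int\psi(x,z)\,\mu_z(dx)$ is $\sigma(\mathcal{S}(Z))$-measurable. Since $\mathcal{H}$ is a vector space, contains the constant functions, and is closed under bounded monotone sequential limits (by dominated convergence), the monotone class theorem reduces matters to showing $1_{A\times B}\in\mathcal{H}$ for every rectangle with $A\in\sigma(\mathcal{S}(X))$ and $B\in\sigma(\mathcal{S}(Z))$. For such a rectangle $h_\psi(z)=\mu_z(A)\cdot 1_B(z)$; since $1_B$ is trivially $\sigma(\mathcal{S}(Z))$-measurable, the task collapses to establishing measurability of the one-variable function $z\mapsto\mu_z(A)$.

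For this I would run a Dynkin-class argument in the variable $A$. The class
$\mathcal{A}=\{A\in\sigma(\mathcal{S}(X)):z\mapsto\mu_z(A)\text{ is }\sigma(\mathcal{S}(Z))\text{-measurable}\}$
contains $X$ and is closed under complements (via $\mu_z(A^c)=1-\mu_z(A)$) and under countable disjoint unions (by $\sigma$-additivity), hence is a Dynkin system. By the functional characterization of measurability into $\mathcal{P}(X)$ recalled at the end of Section~2, $\mathcal{A}$ already contains $\mathcal{B}(X)$. Since the Souslin sets form a $\pi$-system generating $\sigma(\mathcal{S}(X))$, Dynkin's theorem reduces the proof to verifying $\mathcal{S}(X)\subset\mathcal{A}$.

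The main obstacle is precisely this last inclusion. For Souslin $A\subset X$ I would invoke the classical descriptive-set-theoretic fact that $\mu\mapsto\mu(A)$ is upper semi-analytic on $\mathcal{P}(X)$, i.e.\ $\{\mu:\mu(A)>c\}$ is Souslin in $\mathcal{P}(X)$ for each $c$. In the Borel case for $z\mapsto\mu_z$ this closes the argument at once, because preimages of sets from $\sigma(\mathcal{S}(\mathcal{P}(X)))$ under Borel mappings lie in $\sigma(\mathcal{S}(Z))$, as recalled in Section~2; tracking the Borel hypotheses throughout the monotone class reduction yields the second (Borel-measurable) clause of the lemma. Under the weaker $(\sigma(\mathcal{S}(Z)),\mathcal{B}(\mathcal{P}(X)))$-measurability hypothesis pulling back Souslin sets is not automatic, so I would proceed through Choquet capacitability: fix a Souslin scheme $A=\bigcup_{\sigma}\bigcap_n F_{\sigma|n}$ of Borel sets and form, for each $\tau\in\mathbb{N}^{\mathbb{N}}$, the Borel approximants $A_\tau=\bigcup_{\sigma\leq\tau}\bigcap_n F_{\sigma|n}\subset A$, for which $\mu_z(A)=\sup_\tau\mu_z(A_\tau)$ pointwise in $z$; combined with a measurable selection of an approximating parameter $\tau(z)$ and the $\sigma(\mathcal{S}(Z))$-measurability of $z\mapsto\mu_z(A_\tau)$ for each fixed $\tau$ (already in $\mathcal{A}$), one realizes $z\mapsto\mu_z(A)$ as a countable measurable operation on already-measurable functions, completing the proof.
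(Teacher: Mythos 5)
Your overall strategy coincides with the paper's: a functional monotone class argument over the class $\mathcal{H}$ of good integrands, reduced to indicators of measurable rectangles (the paper uses the equivalent algebra of sums $\varphi_1(x)\psi_1(z)+\cdots+\varphi_n(x)\psi_n(z)$), and then to the single question of the measurability of $z\mapsto\mu_z(A)$ for $A\in\sigma(\mathcal{S}(X))$. Your Dynkin-system reduction to Souslin $A$ and your treatment of the sub-case where $z\mapsto\mu_z$ is Borel are correct and in fact more explicit than the paper, which simply asserts that the product functions with $\sigma(\mathcal{S}(X))$-measurable first factors belong to $\mathcal{H}$; you have correctly located the one assertion that carries the entire content of the lemma. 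The second clause (everything Borel) is also fine: there the rectangles have Borel sides and no Souslin sets enter at all.

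The gap is in your last step, the case where $A$ is Souslin but $z\mapsto\mu_z$ is only $(\sigma(\mathcal{S}(Z)),\mathcal{B}(\mathcal{P}(X)))$-measurable. Upper semi-analyticity gives that $\{\mu\colon\mu(A)>c\}$ is Souslin in $\mathcal{P}(X)$, but the hypothesis only lets you pull back \emph{Borel} subsets of $\mathcal{P}(X)$ into $\sigma(\mathcal{S}(Z))$; pulling back a Souslin set amounts to applying the Souslin operation to sets of $\sigma(\mathcal{S}(Z))$, and Section~2 of the paper explicitly recalls that $\sigma(\mathcal{S}(Z))$ is \emph{not} stable under the Souslin operation. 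Your proposed repair via capacitability does not close this: $\mu_z(A)=\sup_\tau\mu_z(A_\tau)$ is a supremum over uncountably many $\tau\in\mathbb{N}^{\mathbb{N}}$, which is not ``a countable measurable operation on already-measurable functions,'' and any measurable selection of an $\varepsilon$-approximating parameter $\tau(\cdot)$ produced by a selection theorem lives on $\mathcal{P}(X)$ and is at best $\sigma(\mathcal{S}(\mathcal{P}(X)))$-measurable, so composing it with $z\mapsto\mu_z$ requires exactly the pullback property you lack. To be fair, the paper's own proof is silent on this point as well --- it asserts the containment of the generating algebra in $\mathcal{H}$ without justification --- but as written your sketch does not supply the missing argument either; you would need to either strengthen the hypothesis on $z\mapsto\mu_z$ or give a genuinely different treatment of Souslin $A$.
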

\begin{proof}
In the case of the $(\sigma(\mathcal{S}(Z)),\mathcal{B}(\mathcal{P}(X)))$-measurability
the class $\mathcal{H}$ of all boun\-ded $\sigma(\mathcal{S}(X))\otimes \sigma(\mathcal{S}(Z))$-measurable functions $\psi$
for which $h$ is $\sigma(\mathcal{S}(Z))$-measurable is closed with respect to uniform limits and
limits of increasing uniformly bounded sequences.
Moreover, it contains all functions of the form
$$
\varphi_1(x)\psi_1(z)+\cdots +\varphi_n(x)\psi_n(z),
$$
where   $\varphi_i$ and $\psi_i$ are bounded functions
on $X$ and $Z$ measurable with respect to $\sigma(\mathcal{S}(X))$ and
$\sigma(\mathcal{S}(Z))$, respectively. Applying the monotone class theorem, we conclude that $\mathcal{H}$
is the space of all bounded $\sigma(\mathcal{S}(X))\otimes \sigma(\mathcal{S}(Z))$-measurable functions
(see \cite[Theorem~2.12.9]{B07}). In the  case of Borel measurability, the same reasoning applies if we take for $\mathcal{H}$
 the class  of all bounded Borel measurable functions for which the corresponding function $h$ is Borel measurable.
\end{proof}

\begin{remark}\label{rem3.2}
\rm
It follows from the lemma that if we have a family of Borel sets $B_z$ such that
the function $I_{B_z}(x)$ is $\sigma(\mathcal{S}(X))\otimes \sigma(\mathcal{S}(Z))$-measurable,
then the function
$z\mapsto \mu_z(B_z\cap B)$ is $\sigma(\mathcal{S}(Z))$-measurable
for all $B\in\sigma(\mathcal{S}(X))$ and similarly in the Borel case.
\end{remark}

\begin{lemma}\label{lem2.2}
Suppose that we  have a mapping $(y,z)\mapsto \nu_z^y$ from $Y\times Z$
to $\mathcal{P}(X)$ that is measurable with respect to $\sigma(\mathcal{S}(Y\times Z))$
 and $\mathcal{B}(\mathcal{P}(X))$
and a mapping
$$
(z,x)\mapsto f_z(x), \quad Z\times X\to Y
$$
 that is measurable with respect to $\sigma(\mathcal{S}(Y\times Z))$ and $\mathcal{B}(Y)$.
Then the set
$$
S=\{(y,z)\in Y\times Z\colon\, \nu_z^y (f_z^{-1}(y))=1\}
$$
belongs to $\sigma(\mathcal{S}(Y\times Z))$.
If both  mappings are Borel measurable, then  $S$ is also Borel.
\end{lemma}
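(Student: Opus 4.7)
The strategy is to realize the measure $\nu_z^y(f_z^{-1}(y))$ as an integral depending measurably on $(y,z)$, and then recognize $S$ as the preimage of the Borel singleton $\{1\}$ under this measurable function. The natural integrand is the indicator of the ``graph'' set
$$
\Gamma = \{(x,y,z) \in X\times Y\times Z \colon f_z(x) = y\},
$$
because $\nu_z^y(f_z^{-1}(y))$ is the integral of its $(y,z)$-section against $\nu_z^y$. Once I have $\Gamma$ in the appropriate product $\sigma$-algebra on $X\times (Y\times Z)$, I can directly apply Remark~\ref{rem3.2} (with parameter space $Y\times Z$ in place of $Z$) to conclude that the function $(y,z)\mapsto \nu_z^y(f_z^{-1}(y))$ is $\sigma(\mathcal{S}(Y\times Z))$-measurable, respectively Borel.

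To establish measurability of $\Gamma$, I would use a countable family $\{\varphi_n\}\subset C_b(Y)$ separating the points of $Y$, which exists since $Y$ is completely regular Souslin. Then
$$
\Gamma = \bigcap_{n} \bigl\{(x,y,z) \colon \varphi_n(f_z(x)) = \varphi_n(y)\bigr\},
$$
equivalently $\Gamma = F^{-1}(\Delta_Y)$ for $F(x,y,z)=(f_z(x),y)$ and $\Delta_Y$ the diagonal of $Y\times Y$, which is closed since $Y$ is Hausdorff. The function $(x,y,z)\mapsto \varphi_n(y)$ factors through the continuous projection to $Y$ and is Borel; the function $(x,y,z)\mapsto \varphi_n(f_z(x))$ factors through the continuous projection to $Z\times X$ and is measurable with respect to the Souslin $\sigma$-algebra of its domain. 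Since for completely regular Souslin spaces the Borel $\sigma$-algebra of a product equals the tensor product of the Borel $\sigma$-algebras, preimages of tensor-product rectangles under these projections lie in $\sigma(\mathcal{S}(X))\otimes \sigma(\mathcal{S}(Y\times Z))$ (respectively in $\mathcal{B}(X)\otimes \mathcal{B}(Y\times Z)$). Hence $\Gamma$ lies in the required product $\sigma$-algebra on $X\times (Y\times Z)$.

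With $\Gamma$ in hand, Remark~\ref{rem3.2} applied to $B_{(y,z)} = \Gamma_{(y,z)} = f_z^{-1}(y)$ yields that the map
$$
(y,z) \mapsto \nu_z^y\bigl(f_z^{-1}(y)\bigr) = \int_X I_\Gamma(x,y,z)\, \nu_z^y(dx)
$$
is $\sigma(\mathcal{S}(Y\times Z))$-measurable, respectively Borel measurable when the two given mappings are Borel. Therefore $S$, being the preimage of $\{1\}$ under this measurable real-valued function, belongs to the required $\sigma$-algebra. The main obstacle is the second step: confirming that $\Gamma$ lies not merely in $\sigma(\mathcal{S}(X\times Y\times Z))$ (which is easy from the diagonal representation) but in the product $\sigma$-algebra required by Remark~\ref{rem3.2}. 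Reducing $\Gamma$ to a countable intersection of zero sets of functions that are each either continuous in $y$ alone or a composition through the projection to $Z\times X$ is the device that keeps everything inside the tensor product.
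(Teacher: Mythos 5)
Your proof is correct, and it runs on the same engine as the paper's --- Lemma~\ref{lem2.1} (via Remark~\ref{rem3.2}) applied with $Y\times Z$ as the parameter space and a countable family of continuous test functions on $Y$ --- but the reduction to that lemma is organized differently. The paper never introduces the graph $\Gamma$: it observes that $\nu_z^y(f_z^{-1}(y))=1$ is equivalent to $\nu_z^y\circ f_z^{-1}=\delta_y$, tests this against a countable family $\{\psi_j\}\subset C_b(Y)$ separating Borel \emph{measures} on $Y$, and writes $S$ as the countable intersection of the sets $\bigl\{(y,z)\colon \int_X\psi_j(f_z(x))\,\nu_z^y(dx)=\psi_j(y)\bigr\}$, each measurable by one application of Lemma~\ref{lem2.1} to the integrand $\psi_j\circ f$, whose measurability comes straight from the hypothesis on $f$ composed with a continuous function. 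You instead integrate the indicator of $\Gamma$, which obliges you to place $\Gamma$ itself in the product $\sigma$-algebra; your device for that --- writing $\Gamma$ as a countable intersection of the zero sets of $\varphi_n(f_z(x))-\varphi_n(y)$ for a \emph{point}-separating family --- is sound, and in the Borel case the identity $\mathcal{B}(X\times Y\times Z)=\mathcal{B}(X)\otimes\mathcal{B}(Y\times Z)$ for Souslin spaces closes the gap you correctly single out as the main obstacle. The paper's route buys economy: the only objects whose joint measurability must be tracked are the scalar functions $\psi_j\circ f$, so the graph never has to be located in any $\sigma$-algebra. Your route buys a cleaner endgame: a single application of Lemma~\ref{lem2.1} and $S$ realized as the preimage of $\{1\}$ under one measurable function, rather than as an intersection of countably many level sets. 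Both arguments share the same fine point in the $\sigma(\mathcal{S}(\cdot))$ case, namely that the stated measurability of $(z,x)\mapsto f_z(x)$ (with respect to the Souslin $\sigma$-algebra of a product space, which by Remark~3.8(ii) can be strictly larger than the tensor product of the factor $\sigma$-algebras) must be converted into the tensor-product measurability on $X\times(Y\times Z)$ that Lemma~\ref{lem2.1} demands; your proof is neither more nor less explicit about this than the paper's, and in the Borel setting --- the one actually used in Theorem~\ref{t1} --- the issue disappears.
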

\begin{proof}
The pair $(y,z)$ belongs to this set precisely when
$$
\nu_z^y\circ f_z^{-1}=\delta_y .
$$
This is equivalent to the identity
$$
\int_X \psi_j(f_z(x))\, \nu_z^y (dx)=\psi_j(y)
$$
for a fixed countable family $\{\psi_j\}\subset C_b(Y)$ separating
Borel measures on~$Y$ (as recalled above, such  collections exist for all completely regular
Souslin spaces).
Since $\psi_j$  is Borel measurable, it remains to apply  Lemma~\ref{lem2.1} to the space $Y\times Z$.
\end{proof}

We now prove the existence of conditional measures measurably depending on a parameter.
Our proof is a modification of the reasoning used in \cite{Malofeev}, where a somewhat stronger 
assumption was used, but for the reader's convenience
we repeat some steps from \cite{Malofeev} instead of referring to that paper.
Another important reason for this repeating is that we also indicate some changes 
necessary for obtaining conditions for the Borel measurability of conditional measures, which will 
be the subject of the next theorem.

\begin{theorem}\label{t1}
Let
$$
f\colon\, (x,z)\mapsto f_z(x), \quad X\times Z\to Y
$$
be a  Borel mapping. Suppose  that for every $z\in Z$ there is a
Borel probability measure $\mu_z$ on~$X$ such that the mapping
$$
z\mapsto \mu_z, \ Z\to \mathcal{P}(X)
$$
is Borel measurable or, more generally,  $(\sigma(\mathcal{S}(Z)),\mathcal{B}(\mathcal{P}(X)))$-measurable.
Then, for all pairs $(\mu_z, f_z)$,
 there exist proper conditional probabilities $\{\mu^y_z\}_{y\in Y}$ on~$X$
 such that, for every Borel set $B$ in $X$, the function
$$
(y,z)\mapsto \mu_z^y(B)
$$
on $Y\times Z$ is $\sigma(\mathcal{S}(Y\times Z))$-measurable,
i.e., the mapping
$$
(y,z)\mapsto \mu_z^y, \quad Y\times Z\to \mathcal{P}(X)
$$
is measurable with respect to the $\sigma$-algebra $\sigma(\mathcal{S}(Y\times Z))$.
\end{theorem}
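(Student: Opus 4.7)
The plan is to adapt the classical construction of conditional measures on Souslin spaces---in which a countable family of test functions is used to build the $\mu^y$ from their conditional expectations against $f$---to the parametric setting, obtaining joint measurability by producing, for each test function, a single jointly measurable Radon--Nikodym derivative. As a preliminary step I would Borel-embed $X$ into a compact metric space $\widehat X$ and fix a countable, $\mathbb Q$-linear, point-separating subalgebra $\{\varphi_n\}_{n\ge 0}\subset C_b(X)$ containing the constant function~$1$; such a family exists by \cite[Theorem~6.7.7]{B07} and, by the monotone class theorem, uniquely determines a Borel probability measure through its integrals.

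Set $\nu_z:=\mu_z\circ f_z^{-1}$ and, for each $n$, $\lambda_{z,n}(B):=\int_{f_z^{-1}(B)}\varphi_n\,d\mu_z$ for $B\in\mathcal{B}(Y)$. Lemma~\ref{lem2.1} guarantees that $z\mapsto\nu_z(B)$ and $z\mapsto\lambda_{z,n}(B)$ are $\sigma(\mathcal{S}(Z))$-measurable (Borel in the Borel case) for each fixed $B$. I would then fix an increasing sequence of countable Borel partitions $\mathcal{P}_k=\{B_{k,i}\}_i$ of $Y$ whose union generates $\mathcal{B}(Y)$ (possible since $Y$ is Souslin and its Borel $\sigma$-algebra is countably generated) and form the approximations
\[
h_n^{(k)}(z,y):=\sum_{i} \frac{\lambda_{z,n}(B_{k,i})}{\nu_z(B_{k,i})}\,\mathbf 1_{B_{k,i}}(y),
\]
with the convention $0/0=0$. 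Each $h_n^{(k)}$ is $\sigma(\mathcal{S}(Z))\otimes\mathcal{B}(Y)$-measurable, hence in $\sigma(\mathcal{S}(Y\times Z))$. For fixed~$z$, L\'evy's martingale theorem gives $h_n^{(k)}(z,\cdot)\to d\lambda_{z,n}/d\nu_z$ both $\nu_z$-a.e.\ and in $L^1(\nu_z)$, so $g_n(z,y):=\limsup_{k\to\infty} h_n^{(k)}(z,y)$ is a $\sigma(\mathcal{S}(Y\times Z))$-measurable version of the conditional expectation $E_{\mu_z}(\varphi_n\mid\sigma(f_z))$ expressed as a function of $y=f_z(x)$.

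For each $(y,z)$ define $L_{y,z}\colon\mathrm{span}_{\mathbb Q}\{\varphi_n\}\to\mathbb R$ by $L_{y,z}(\varphi_n):=g_n(z,y)$, and let $G\subset Y\times Z$ be the set where $L_{y,z}$ is positive, unital, and sup-norm continuous, so that it extends uniquely to a Borel probability measure $\hat\mu_z^y$ on $\widehat X$; since $G$ is defined by countably many pointwise conditions on $\{g_n\}$, it lies in $\sigma(\mathcal{S}(Y\times Z))$. For each fixed~$z$ the slice $\{y:(y,z)\in G\}$ has full $\nu_z$-measure, and $\hat\mu_z^y$ is concentrated on~$X$ there (because $\mu_z(\widehat X\setminus X)=0$ forces $\hat\mu_z^y(\widehat X\setminus X)=0$ $\nu_z$-a.e.\ via the disintegration identity~3), which at this stage already holds for test functions and extends by monotone class). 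Setting $\mu_z^y:=\hat\mu_z^y$ on $G$ and $\mu_z^y:=\mu_z$ off $G$, Lemma~\ref{lem2.2} gives $S:=\{(y,z):\mu_z^y(f_z^{-1}(y))=1\}\in\sigma(\mathcal{S}(Y\times Z))$. To enforce properness~1) for every $(y,z)$, I would replace $\mu_z^y$ off $S$ by $\delta_{x(y,z)}$, where $x(y,z)\in f_z^{-1}(y)$ is supplied by a Jankov--von Neumann selector applied to the Souslin set $\{(x,y,z):f_z(x)=y\}\subset X\times Y\times Z$, choosing an arbitrary default on the $\sigma(\mathcal{S}(Y\times Z))$-set $\{y\notin f_z(X)\}$ where the fiber is empty (this set is $\nu_z$-null for every $z$, so the identity~3) is preserved).

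The main obstacle is precisely this concluding properness step: a measurable selector into the fibers $f_z^{-1}(y)$ of a Borel mapping depending on~$z$ is, in general, available only at the $\sigma(\mathcal{S}(Y\times Z))$ level rather than the Borel level, which is exactly the obstruction responsible for the weakened measurability in the conclusion. This is the parametric analogue of the Blackwell--Ryll-Nardzewski phenomenon discussed before the theorem, and is precisely the gap that the additional hypotheses of Theorem~\ref{t2} will close.
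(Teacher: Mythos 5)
Your proposal is correct and follows essentially the same route as the paper's proof: jointly measurable partition-based approximations to the conditional expectations of a countable point-separating family, a martingale-convergence plus compactification argument (your Riesz-functional formulation on $\widehat X$ is the paper's weak compactness of $\mathcal{P}([0,1]^\infty)$ in different clothing), restriction to the $\sigma(\mathcal{S}(Y\times Z))$-measurable set where the limit is concentrated on the fiber, and a Jankov--von Neumann-type selection into the Souslin graph $\{f_z(x)=y\}$ with a constant default on empty fibers. You also correctly locate the selection step as the sole obstruction to Borel measurability, which is exactly what the hypotheses of Theorem~\ref{t2} are designed to remove.
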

\begin{proof}
For every point $z\in Z$, we take the measure
$$
\sigma_z:=\mu_z\circ f_z^{-1}
$$
on $Y$ and an increasing sequence of finite algebras $\mathcal{B}_{z,n}$ the union of which
generates the $\sigma$-algebra
$$
\mathcal{B}_z:=f_z^{-1}(\mathcal{B}(Y)).
$$
Without loss of generality we can assume that $\mathcal{B}_{z,n}$
is generated by some finite partition of $X$ into disjoint sets of the form
$$
A_{z,n,1}=f_z^{-1}(B_{n,1}),\ldots,A_{z,n,m_n}=f_z^{-1}(B_{n,m_n}),
$$
where $B_{n,1},\ldots,B_{n,m_n}$ is a finite partition of $Y$ into disjoint Borel sets
such that the union of $B_{n,i}$ over all $n$ and $i$
generates~$\mathcal{B}(Y)$.
For the space $Y=[0,1)$ one can take $B_{n,i}=[i/n,(i+1)/n)$.
In the general case there is a  continuous injection $T$ of $Y$
into the countable power $[0,1]^\infty$ of $[0,1]$ equipped with the product topology.
Since this is a compact metric space, it can be  covered by finitely many
balls $K_{n,i}$ of radius~$1/n$. So we take  $B_{n,i}=T^{-1}(D_{n,i})$, where $D_{n,1}=K_{n,1}$, 
$D_{n,i+1}=K_{n,i+1}\backslash (K_{n,1}\cup\cdots\cup K_{n,i})$.

The conditional measures for $\mu_z$ and the $\sigma$-algebra $\mathcal{B}_{z,n}$ can be written explicitly:
$$
\mu_{z,n}^y(A)= \sum_{i=1}^{m_n}
\frac{\mu_z(A\cap A_{z,n,i})}{\mu_z(A_{z,n,i})} I_{B_{n,i}}(y),
$$
where $\mu_z(A\cap A_{z,n,i})/\mu_z(A_{z,n,i})=0$ if $\mu_z(A_{z,n,i})=0$.
By Lemma~\ref{lem2.1}
the functions $(y,z)\mapsto \mu_{z,n}^y(A)$ are
$\sigma(\mathcal{S}(Y))\otimes \sigma(\mathcal{S}(Z))$-measurable
(and Borel measurable if $z\mapsto \mu_z$ is Borel), because
$I_{A_{z,n,i}}(x)=I_{B_{n,i}}(f_z(x))$, $I_{A\cap A_{z,n,i}}=I_A I_{A_{z,n,i}}$.
It is obvious  that $\mu_{z,n}^y(A)$ coincides with the conditional expectation
of the function $I_A$ with respect to the $\sigma$-algebra $\mathcal{B}_{z,n}$ and the measure $\mu_z$.
Therefore, for any Borel function $\varphi$
on $X$ the conditional expectation $E_z(\varphi|\mathcal{B}_{z,n})$
of $\varphi$ with respect to $\mathcal{B}_{z,n}$ and $\mu_z$ equals
$$
\int_X \varphi(x)\, \mu_{z,n}^y(dx).
$$
According to the martingale convergence theorem (see \cite[Section~10.3]{B07}, for every fixed~$z$,  the constructed functions
$E_z(\varphi|\mathcal{B}_{z,n})$ converge $\sigma_z$-almost everywhere
and in the space $L^1(\sigma_z)$ to the conditional expectation $E_z(\varphi|\mathcal{B}_z)$
of $\varphi$ with respect to the $\sigma$-algebra
$\mathcal{B}_z$ and the measure $\mu_z$.

However, we need conditional measures, not conditional expectations. Of course,
it is known that some conditional measures $\mu_z^y$ exist and define
the same conditional expectations.
Unfortunately, not every choice of $\mu_z^y$ is suitable to guarantee the joint measurability in~$(y,z)$, because the relations defining
conditional expectations hold almost everywhere, not pointwise,
 and the corresponding measure zero sets depend on~$\varphi$ and~$z$. So a constructive method
of selecting conditional measures is needed.

In order to define our conditional probabilities, we  consider the set of points for which
the sequence of measures $\mu_{z,n}^y$ converges
and its limit is concentrated on the set $f_z^{-1}(y)$. Convergence is easier achieved on a compact space.
By using a countable family in $C_b(X)$
separating points, we can embed $X$ continuously into the cube $I:=[0,1]^\infty$ and
assume that $X$ is a Souslin set in $I$ (equipped with a stronger
topology than the one induced from $[0,1]^\infty$).
The countable family of polynomials in coordinate functions  of the form
$\sum c_{i_1,\ldots,i_m,k_1,\ldots,k_m}x_{i_1}^{k_1}\cdots x_{i_m}^{k_m}$, where $c_{i_1,\ldots,k_m}$ are
rational numbers, will be denoted by~$\{\varphi_j\}$.

We denote by $\Omega$ the set of all points $(y,z)\in Y\times Z$ for which,  for every $\varphi_j$,
the sequence of integrals
$$
\int_X \varphi_j(x)\, \mu_{z,n}^y(dx)
$$
has a finite limit as $n\to\infty$.
Every integral is a $\sigma(\mathcal{S}(Y\times Z))$-measurable function of $(y,z)$. Moreover, it is Borel if
the mapping $z\mapsto \mu_z$ is Borel measurable.
Hence $\Omega\in \sigma(\mathcal{S}(Y\times Z))$ and $\Omega\in\mathcal{B}(Y\times Z)$ if
$z\mapsto \mu_z$ is Borel measurable.

We now use the compactness of $I$, due to which for any $(y,z)\in \Omega$
the sequence of measures $\mu_{z,n}^y$ regarded on~$X$ converges weakly to a Borel probability measure
$\nu_z^y$ on~$I$ (but so far we do not claim that it is concentrated on~$X$).

According to Lemma~\ref{lem2.2}, the subset
$$
\Omega_0:=\{(y,z)\in \Omega\colon \nu_z^y(f_z^{-1}(y))=1\}
$$
belongs to $\sigma(\mathcal{S}(Y\times Z))$ (and is  Borel
if $z\mapsto \mu_z$ is Borel).
For each $(y,z)\in \Omega_0$, the measure $\nu_z^y$ is obviously concentrated
on~$X$ (recall that the Souslin set $X$ is measurable with respect to all Borel measures).
The set
$$
\Omega_1=\{(y,z)\in Y\times Z\colon\ y\in f_z(X)\}
$$
is the projection of the graph of~$f$ under the mapping
$$
X\times Z\times Y\to Y\times Z,\quad (x,z,y)\mapsto (y,z).
$$
This set is Souslin in~$Y\times Z$.
If each $f_z$ is a surjection, then $\Omega_1=Y\times Z$.

Now we are going to apply the measurable choice theorem (see \cite[Theorem~6.9.2]{B07})
to the multivalued mapping $\Psi\colon\, (y,z)\mapsto f_z^{-1}(y)$ on $\Omega_1$
with values in the class of non-empty subsets of~$X$.
Its graph is the set
$$
\{(y,z,u)\colon\, (y,z)\in \Omega_1, \ u\in f_z^{-1}(y)\}
=\{(y,z,u)\colon\, (y,z)\in \Omega_1, \ f_z(u)=y\} ,
$$
which is Souslin, because $(y,z,u)\mapsto f_z(u)$ and
$(y,z,u)\mapsto y$ are Borel mappings.
By the cited theorem there is a mapping
$$
g\colon\, (y,z)\mapsto g_z(y), \   \Omega_1\to X
$$
such that
$$
g_z(y)\in f_z^{-1}(y) \quad \forall\,  z\in Z,  y\in f_z(X)
$$
and $g$ is measurable with respect to the
restriction of $\sigma(\mathcal{S}(Y))\otimes\sigma(\mathcal{S}(Z))$ to~$\Omega_1$
and~$\mathcal{B}(X)$.

Finally, if $(y,z)\not\in \Omega_0$ and $y\in f_z(X)$, i.e., $(y,z)\in \Omega_1$, we set $\nu_z^y:=\delta_{g_z(y)}$,
and if  $y\not\in f_z(X)$, we set $\nu_z^y:=\delta_{x_0}$, where $x_0\in X$ is a fixed point
independent of $y$ and~$z$. The constructed family of measures $\nu_z^y$ is measurable
with respect to $\mathcal{S}(Y\times Z)$.
Indeed,  its restriction to $\Omega_0$ is measurable with respect to the trace of $\sigma(\mathcal{S}(Y))\otimes \sigma(\mathcal{S}(Z))$.
The restriction to $\Omega_1\backslash \Omega_0$ is measurable with respect to the trace of
$\sigma(\mathcal{S}(Y\times Z))$, because for every $f\in C_b(X)$  the function $f(g_z(y))$
is $\sigma(\mathcal{S}(Y\times Z))$-measurable by the measurability of~$g$.
The restriction to the complement of $\Omega_0\cup \Omega_1$ is constant, and both sets
$\Omega_0$ and $\Omega_1$ are in $\sigma(\mathcal{S}(Y\times Z))$.

It remains to verify that the  measures $\nu_z^y$ serve as
conditional probabilities with the required properties.
By definition $\nu_z^y(f_z^{-1}(y))=1$ for all $y\in Y$ and $z\in Z$. The function
$$
(y,z)\mapsto
\int_X \varphi_j(x)\, \nu_z^y(dx)
$$
is $\sigma(\mathcal{S}(Y\times Z))$-measurable for every $\varphi_j$, which gives the required measurability of~$\{\nu_z^y\}$.
In order to see that condition~3) from the definition of proper regular conditional measures holds, it is enough
to show that,
picking arbitrary regular conditional measures $\mu_z^y$ for $\mu_z$
(not necessarily measurable  in~$z$), for every fixed $z\in Z$, we have
$$
\nu_z^y=\mu_z^y \quad \hbox{for $\sigma_z$-almost every $y$.}
$$
By the definition of $\sigma_z$ this is equivalent to the relation
$$
\nu_z^{f(x)}=\mu_z^{f(x)} \quad \hbox{for $\mu_z$-almost every $x$.}
$$
This relation holds, since there is a countable family of bounded continuous
functions on $X$ separating Borel measures, and for
every function $\psi$ from this family its integrals
against $\nu_z^{f(x)}$ and $\mu_z^{f(x)}$ coincide $\mu_z$-almost everywhere, because, as explained above,  both expressions
$$
\int_X \psi(u) \, \mu_z^{f(x)}(du)\quad \hbox{and}\quad \int_X \psi(u) \, \nu_z^{f(x)}(du)
$$
serve as the conditional expectation of $\psi$ with respect to $\mathcal{B}_z$ and~$\mu_z$.
\end{proof}

\begin{theorem}\label{t2}
Suppose that in Theorem~{\rm\ref{t1}}
for each $z$ the mapping $f_z\colon X\to Y$
is a Borel surjection possessing a right inverse mapping~$g_z$ such that
$(y,z)\mapsto g_z(y)$ is Borel measurable
{\rm(}or, more generally, the set $\bigcup_z (f_z(X)\times \{z\})$ is Borel in $Y\times Z$
and the mapping $(y,z)\mapsto g_z(y)$ is Borel measurable{\rm)}; for example,
the mapping $f\colon X\to Y$ does not
depend on~$z$ and is a Borel surjection possessing a Borel right inverse mapping~$g$.
If also $z\mapsto\mu_z$ is Borel measurable,
then there exists a jointly Borel measurable version of conditional measures~$\mu^y_z$.

In particular, this is true if $X$ is the product of two Souslin
spaces~$X_1$ and~$X_2$, $f$~is the standard projection onto~$X_2$,
and $z\mapsto \mu^z$ is Borel measurable.
\end{theorem}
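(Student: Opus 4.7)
My plan is to revisit the construction in Theorem~\ref{t1} and verify that, under the additional hypothesis of Theorem~\ref{t2}, every object produced is Borel rather than merely $\sigma(\mathcal{S}(Y\times Z))$-measurable. Only two steps of the proof of Theorem~\ref{t1} produced sets or maps that were genuinely in $\sigma(\mathcal{S})$ and not in~$\mathcal{B}$: the set $\Omega_1=\{(y,z):y\in f_z(X)\}$, obtained as a projection of the graph of~$f$, and the right inverse $g_z$ supplied by the measurable selection theorem. The added hypotheses of Theorem~\ref{t2} supply a Borel~$\Omega_1$ (under the more general formulation, by assumption; under the stronger one, because $f_z$ is surjective so $\Omega_1=Y\times Z$) and a jointly Borel $g_z$ directly, so those are precisely the two bottlenecks that disappear.

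First I would note that, because $z\mapsto\mu_z$ is Borel, Lemma~\ref{lem2.1} implies that the finite approximating conditional measures $\mu^y_{z,n}$ depend jointly Borel measurably on~$(y,z)$. Consequently the set $\Omega$ of convergence of all integrals $\int\varphi_j\,d\mu^y_{z,n}$ is Borel (as noted in the proof of Theorem~\ref{t1}), and Lemma~\ref{lem2.2} makes $\Omega_0=\{(y,z)\in\Omega:\nu^y_z(f_z^{-1}(y))=1\}$ Borel as well. On $\Omega_0$ the limit measure $\nu^y_z$ is concentrated on~$X$, and Borel measurability of the integrals $(y,z)\mapsto\int\varphi_j\,d\nu^y_z$, combined with the characterization recalled at the end of Section~2, yields joint Borel measurability of $(y,z)\mapsto\nu^y_z$ as a map into~$\mathcal{P}(X)$ on~$\Omega_0$.

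Next I would define $\nu^y_z$ on the complement of $\Omega_0$. The set $\Omega_1$ is Borel by hypothesis, hence $\Omega_1\setminus\Omega_0$ is Borel; there I set $\nu^y_z:=\delta_{g_z(y)}$, and joint Borel measurability of $g$ together with continuity of $x\mapsto\delta_x$ from $X$ into $\mathcal{P}(X)$ makes this choice jointly Borel. On the remaining Borel set $(Y\times Z)\setminus(\Omega_0\cup\Omega_1)$ I set $\nu^y_z:=\delta_{x_0}$ for a fixed $x_0\in X$, exactly as in Theorem~\ref{t1}. Patching the three Borel pieces gives a jointly Borel map $(y,z)\mapsto\nu^y_z$ into~$\mathcal{P}(X)$. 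That these are proper regular conditional probabilities for each pair $(\mu_z,f_z)$ is proved verbatim as in Theorem~\ref{t1}, since the argument there did not depend on which $\sigma$-algebra was used.

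For the particular case, if $X=X_1\times X_2$ and $f$ is the projection onto $X_2=Y$, then $g(y):=(x_1^0,y)$ for any fixed $x_1^0\in X_1$ is a Borel right inverse (independent of~$z$) and $\bigcup_z(f(X)\times\{z\})=Y\times Z$ is trivially Borel, so the general statement applies. I do not anticipate any substantive obstacle: the entire content of Theorem~\ref{t2} is that the two Souslin pathologies encountered in Theorem~\ref{t1} are hypothesized away, and the bookkeeping sketched above is then the whole proof.
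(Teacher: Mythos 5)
Your proposal is correct and follows essentially the same route as the paper: the published proof of Theorem~\ref{t2} likewise just observes that the only two non-Borel ingredients in the proof of Theorem~\ref{t1} were the Souslin projection $\Omega_1$ and the selection-theorem right inverse, both of which the added hypotheses replace by Borel objects, with all other steps already noted to be Borel when $z\mapsto\mu_z$ is Borel. Your more explicit bookkeeping (including the explicit right inverse $g(y)=(x_1^0,y)$ for the product case) is exactly the intended argument.
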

\begin{proof}
This follows from our reasoning above (taking into account the notes about
Borel measurability), since under stronger assumptions of this theorem
we already have jointly
Borel measurable right inverse mappings $g_z$ without any measurable choice theorems.
The corresponding Dirac measures $\delta_{g_z(y)}$ defined
for all $y,z$ from the complement of the Borel set $\Omega_0$
are also jointly Borel measurable
and $\Omega_1=Y\times Z$ in the surjective case. Similarly we consider
the case where $f_z$ is not surjective
and $\bigcup_z (f_z(X)\times \{z\})$ is Borel.
\end{proof}

Note that in the case of the product-space $X=X_1\times X_2$ and the projection $\pi_{X_2}$ on~$X_2$
it is sometimes more convenient to consider conditional measures on the common space $X_1$
in place of the slices $X_1\times \{x_2\}=\pi_{X_2}^{-1}(x_2)\subset X_1\times X_2$.
Both representations are equivalent and it is easy to pass from one to the other.

The assertion with a single mapping $f$ not depending on the parameter admits an obvious
generalization.

\begin{corollary}
Let $X_1$ and $X_2$ be completely regular Souslin spaces, let $(T,\mathcal{T})$ be a measurable
space, and let $t\mapsto \mu_t$ be a mapping from $T$ to $\mathcal{P}(X_1\times X_2)$ that is
measurable with respect to $\mathcal{T}$ and $\mathcal{B}(\mathcal{P}(X_1\times X_2))$.
Then there is a mapping $$(t,x_2)\mapsto \mu_t^{x_2}\in \mathcal{P}(X_1),$$
measurable with respect to  $\mathcal{T}\otimes\mathcal{B}(X_2)$ and
$\mathcal{B}(\mathcal{P}(X_1))$, such that the measures $\mu_t^{x_2}$ serve
as conditional measures for $\mu_t$ and the projection on~$X_2$.
\end{corollary}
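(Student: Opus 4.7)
The plan is to reduce the general measurable-space setting to the Souslin-parameter case already covered by Theorem~\ref{t2}, using $\mathcal{P}(X_1\times X_2)$ itself as a \emph{universal parameter space} and then pulling back along the given measurable map $t\mapsto\mu_t$. Concretely, I set $Z:=\mathcal{P}(X_1\times X_2)$, which is a completely regular Souslin space, and consider the tautological family $\nu_z:=z$; the identity map $z\mapsto\nu_z$ is trivially Borel measurable from $Z$ into $\mathcal{P}(X_1\times X_2)$. Taking $f=\pi_{X_2}\colon X_1\times X_2\to X_2$ (independent of $z$, with Borel right inverse $x_2\mapsto(x_1^0,x_2)$ for any fixed $x_1^0\in X_1$), the final clause of Theorem~\ref{t2} yields conditional probabilities $\widetilde{\nu}_z^{x_2}\in\mathcal{P}(X_1\times X_2)$, concentrated on $X_1\times\{x_2\}$, such that $(z,x_2)\mapsto\widetilde{\nu}_z^{x_2}$ is jointly Borel measurable on $Z\times X_2$.

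As noted in the remark following Theorem~\ref{t2}, these slice measures may be transferred to $X_1$: the push-forward $\mu\mapsto\mu\circ\pi_{X_1}^{-1}$ is continuous from $\mathcal{P}(X_1\times X_2)$ to $\mathcal{P}(X_1)$, so $\nu_z^{x_2}:=\widetilde{\nu}_z^{x_2}\circ\pi_{X_1}^{-1}$ is jointly Borel measurable in $(z,x_2)$ with values in $\mathcal{P}(X_1)$ and inherits, for every fixed $z$, the disintegration identity for $z$ against the projection onto $X_2$. I then define $\mu_t^{x_2}:=\nu_{\mu_t}^{x_2}$. The required measurability with respect to $\mathcal{T}\otimes\mathcal{B}(X_2)$ and $\mathcal{B}(\mathcal{P}(X_1))$ follows by composing the $(\mathcal{T}\otimes\mathcal{B}(X_2))$-measurable map $(t,x_2)\mapsto(\mu_t,x_2)$ with the jointly Borel measurable map $(z,x_2)\mapsto\nu_z^{x_2}$. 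The disintegration identity for $\mu_t$ against the projection onto $X_2$ then holds for every $t$, since it is guaranteed by Theorem~\ref{t2} at $z=\mu_t$.

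The only mildly subtle step is the measurability of the composition in the previous paragraph, which uses the identity $\mathcal{B}(\mathcal{P}(X_1\times X_2))\otimes\mathcal{B}(X_2)=\mathcal{B}(\mathcal{P}(X_1\times X_2)\times X_2)$; this holds because the Borel $\sigma$-algebras of the two Souslin factors are countably generated. I do not expect a genuine obstacle here: the whole content of the corollary is that, once a jointly Borel measurable disintegration is available over the universal parameter space $\mathcal{P}(X_1\times X_2)$, it descends automatically to any measurable space $(T,\mathcal{T})$ that parametrizes such measures.
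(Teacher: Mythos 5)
Your proposal is correct and is essentially the paper's own argument: the paper likewise applies Theorem~\ref{t2} with the universal parameter space $Z=\mathcal{P}(X_1\times X_2)$ to get a jointly Borel measurable family $(\mu,x_2)\mapsto P_\mu^{x_2}$ and then sets $\mu_t^{x_2}:=P_{\mu_t}^{x_2}$. The extra details you supply (transferring slice measures to $X_1$, and the identification $\mathcal{B}(\mathcal{P}(X_1\times X_2))\otimes\mathcal{B}(X_2)=\mathcal{B}(\mathcal{P}(X_1\times X_2)\times X_2)$ for Souslin factors) are exactly the points the paper leaves implicit, and both are valid.
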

\begin{proof}
The previous theorem can be applied with the space $Z=\mathcal{P}(X_1\times X_2)$
as a parameter space, which gives a Borel mapping $(\mu,x_2)\mapsto P^{x_2}_\mu$
such that $P^{x_2}_\mu$ serve as conditional measures for~$\mu$. Then the mapping
$(t,x_2)\mapsto \mu_t^{x_2}:=P_{\mu_t}^{x_2}$ is
measurable with respect to  $\mathcal{T}\otimes\mathcal{B}(X_2)$.
\end{proof}

The following parametric version of the so-called gluing lemma
(see~\cite{V}) has
been noted in \cite[Theorem~7.3]{KNS} (for Polish spaces).

\begin{corollary}
Let $X_1, X_2, X_3$ be completely regular Souslin spaces,
let $(T,\mathcal{T})$ be a measurable
space, and let
$$
t\mapsto \mu_{1,2,t}, \
T\to \mathcal{P}(X_1\times X_2)
\quad\hbox{and}\quad
t\mapsto \mu_{2,3,t}, \ T\to \mathcal{P}(X_2\times X_3)
$$
be $\mathcal{T}$-measurable mappings such that,
for each~$t$, the projections of $\mu_{1,2,t}$ and $\mu_{2,3,t}$ on~$X_2$ coincide.
Then there is a $\mathcal{T}$-measurable mapping $t\mapsto \eta_t$ from $T$
to the space $\mathcal{P}(X_1\times X_2\times X_3)$ such that, for each~$t$, the projection
of $\eta_t$ on $X_1\times X_2$ is $\mu_{1,2,t}$ and the projection on~$X_2\times X_3$ is~$\mu_{2,3,t}$.
\end{corollary}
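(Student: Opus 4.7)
The plan is to reduce the parametric gluing to the classical non-parametric construction and to obtain $\mathcal{T}$-measurability via the monotone-class techniques of this section. I would first disintegrate the two families along the common $X_2$-marginal using the preceding corollary, then glue them pointwise by the standard formula, and finally verify joint measurability by reducing to Lemma~\ref{lem2.1}.

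Let $\nu_t\in\mathcal{P}(X_2)$ be the common projection of $\mu_{1,2,t}$ and $\mu_{2,3,t}$; then $t\mapsto\nu_t$ is $\mathcal{T}$-measurable as the image of a $\mathcal{T}$-measurable family under a continuous push-forward. Applying the preceding corollary to $t\mapsto\mu_{1,2,t}$ gives a $\mathcal{T}\otimes\mathcal{B}(X_2)$-measurable disintegration $(t,x_2)\mapsto\mu_{1,2,t}^{x_2}\in\mathcal{P}(X_1)$, and applying it to $t\mapsto\mu_{2,3,t}$ (after the trivial swap of factors that puts $X_2$ in the second slot) gives a $\mathcal{T}\otimes\mathcal{B}(X_2)$-measurable disintegration $(t,x_2)\mapsto\mu_{2,3,t}^{x_2}\in\mathcal{P}(X_3)$. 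Define $\eta_t\in\mathcal{P}(X_1\times X_2\times X_3)$ by the standard gluing formula
$$\int\varphi\,d\eta_t = \int_{X_2}\int_{X_1}\int_{X_3}\varphi(x_1,x_2,x_3)\,\mu_{2,3,t}^{x_2}(dx_3)\,\mu_{1,2,t}^{x_2}(dx_1)\,\nu_t(dx_2)$$
for bounded Borel $\varphi$. That $\eta_t$ is a probability measure with the prescribed projections onto $X_1\times X_2$ and $X_2\times X_3$ is the non-parametric gluing lemma: for $\varphi=\varphi(x_1,x_2)$ the innermost integral collapses to $1$ and the remaining two integrations reconstruct $\mu_{1,2,t}$ from its disintegration against $\nu_t$; the other projection is symmetric.

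For the $\mathcal{T}$-measurability of $t\mapsto\eta_t$, the characterization of measurability into $\mathcal{P}(X_1\times X_2\times X_3)$ recalled in Section~2 reduces the task to checking that $t\mapsto\int\varphi\,d\eta_t$ is $\mathcal{T}$-measurable for a countable point-separating family of $\varphi$, which I take to be polynomials in tensor products $f_1(x_1)f_2(x_2)f_3(x_3)$ with $f_i\in C_b(X_i)$. For such a $\varphi$,
$$\int\varphi\,d\eta_t = \int_{X_2}f_2(x_2)\bigg(\int f_1\,d\mu_{1,2,t}^{x_2}\bigg)\bigg(\int f_3\,d\mu_{2,3,t}^{x_2}\bigg)\nu_t(dx_2).$$
The two bracketed factors are $\mathcal{T}\otimes\mathcal{B}(X_2)$-measurable in $(t,x_2)$ as compositions of the measurable disintegrations with the continuous evaluations $m\mapsto\int f_i\,dm$ on $\mathcal{P}(X_1)$ and $\mathcal{P}(X_3)$, and multiplication by $f_2(x_2)$ preserves this measurability. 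The $\mathcal{T}$-measurability of the outer integral against $\nu_t$ then follows from the argument of Lemma~\ref{lem2.1}.

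The only mildly delicate point I expect is that Lemma~\ref{lem2.1} is formally stated with a Souslin parameter space, whereas here $T$ is an abstract measurable space. This is not a genuine obstacle: the monotone-class proof of that lemma goes through verbatim with $\sigma(\mathcal{S}(Z))$ replaced by $\mathcal{T}$ and $\sigma(\mathcal{S}(X_2))$ by $\mathcal{B}(X_2)$. The base case $H(t,x_2)=f(t)g(x_2)$ reduces to $t\mapsto f(t)\int g\,d\nu_t$, whose $\mathcal{T}$-measurability is exactly the hypothesis that $t\mapsto\nu_t$ is $\mathcal{T}$-measurable into $\mathcal{P}(X_2)$, and the rest is standard monotone-class bookkeeping, which completes the verification.
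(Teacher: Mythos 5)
Your proposal is correct and follows essentially the same route as the paper: disintegrate both families along the common $X_2$-marginal using the preceding corollary, glue by the standard formula $\eta_t(dx_1\,dx_2\,dx_3)=\mu_{1,2,t}^{x_2}(dx_1)\mu_{2,3,t}^{x_2}(dx_3)\nu_t(dx_2)$, and check measurability and the projections. The paper simply cites the non-parametric gluing lemma and the measurability of the disintegrations for these verifications, whereas you spell out the monotone-class reduction to tensor-product test functions; that extra detail is accurate and consistent with the paper's remark that Lemma~\ref{lem2.1} extends to an abstract parameter space.
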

\begin{proof}
It suffices to recall the usual construction of the measure on $X_1\times X_2\times X_3$
for every fixed~$t$
via conditional measures (see \cite[Lemma~3.3.1]{B18} or~\cite{V03}):
using disintegrations
$$
\mu_{1,2,t}(dx_1 dx_2)=\mu_{1,2,t}^{x_2}(dx_1)\pi_t(dx_2),
\quad
\mu_{2,3,t}(dx_2 dx_3)=\mu_{2,3,t}^{x_2}(dx_3)\pi_t(dx_2),
$$
where $\pi_t$ is the common projection of $\mu_{1,2,t}$ and $\mu_{2,3,t}$ on~$X_2$,
$\mu_{1,2,t}^{x_2}$ and $\mu_{2,3,t}^{x_2}$ are the
 corresponding conditional measures measurably depending on~$t$,
we set
$$
\eta_t(dx_1 dx_2 dx_3)=\mu_{1,2,t}^{x_2}(dx_1)\mu_{2,3,t}^{x_2}(dx_3)\pi_t(dx_2).
$$
This means that for each bounded Borel function $f$ on $X_1\times X_2\times X_3$
we have the  following equality:
$$
\int f\, d\eta_t=\int_{X_2}\!\int_{X_3}\!\int_{X_1} f(x_1,x_2,x_3)\,
\mu_{1,2,t}^{x_2}(dx_1)
\, \mu_{2,3,t}^{x_2}(dx_3)\, \pi_t(dx_2).
$$
The measurability of the mapping $t\mapsto \eta_t$ follows by the measurability of conditional
measures and the projection. The fact that $\eta_t$ has the prescribed projections
is verified directly (see
\cite[Lemma~3.3.1]{B18}).
\end{proof}

\begin{remark}
\rm
The assumption that the space of parameters $Z$ is Souslin is quite natural
in the situation of Theorem~\ref{t1}. However, in the situation of Theorem~\ref{t2}
for $Z$ we can take an arbitrary measurable space $(Z,\mathcal{Z})$
without any topology. The same reasoning shows that if $(z,x)\mapsto f_z(x)$ is
$\mathcal{Z}\otimes \mathcal{B}(X)$-measurable, each $f_z$ is a surjection that admits
a right inverse mapping $g_z$ for which $(z,y)\mapsto g_z(y)$ is
$\mathcal{Z}\otimes \mathcal{B}(Y)$-measurable, and $\mu_z$ is $\mathcal{Z}$-measurable,
then there are conditional measures $\mu_z^y$, measurable with respect to
$\mathcal{Z}\otimes \mathcal{B}(Y)$.
\end{remark}

\begin{remark}
\rm
(i)
It is known  that the $\sigma$-algebra $\sigma(\mathcal{S}(X))$
is not countably generated for  any uncountable Polish space~$X$
(see \cite[Example~6.5.9]{B07}), unlike the Borel $\sigma$-algebra.
This is one of the reasons why the Borel measurability can be preferable in applications.

(ii) One can show that if $Y$ and $Z$ are uncountable Souslin spaces, then
the product $\sigma$-algebra $\sigma(\mathcal{S}(Y)\otimes \mathcal{S}(Z))$ is strictly smaller
than the $\sigma$-algebra $\sigma(\mathcal{S}(Y\times Z))$ of the product-space.

(iii) The existence of conditional expectations
measurable with respect to a parameter can be obtained
under broader assumptions, because
in this case there is no problem with property 1) of conditional measures.
The continuity  of conditional expectations with respect to a parameter was
studied in~\cite{HP}.
\end{remark}

We now see that the existence of jointly Borel conditional measures depending on
the parameter $z$ implies some restrictions on the mappings~$f_z$, so that such
joint Borel measurability cannot be always guaranteed.
The next proposition is a parametrized version of the known result
of Blackwell and Ryll-Nardzewski \cite{BR} for single measures.

\begin{proposition}\label{p1}
Let $X,Y,Z$ be Polish spaces.
Suppose that there is a jointly Borel measurable version
of conditional measures $\mu^y_z$ concentrated on the sets $f_z^{-1}(y)$ for all $y\in Y$
and $z\in Z$. Then there is a Borel mapping $g\colon Z\times Y\to X$ such that
$f_z(g(z,y))=y$ for all $y\in Y$ and $z\in Z$.
\end{proposition}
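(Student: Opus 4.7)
My plan is to reduce to the Kuratowski--Ryll-Nardzewski measurable selection theorem applied on the graph of $f$ endowed with a suitably refined Polish topology. A direct attempt to select a point from $\operatorname{supp}(\mu^y_z)\subset X$ fails because the fibres $f_z^{-1}(y)$ are merely Borel in~$X$: the support of $\mu^y_z$ is only contained in the closure of $f_z^{-1}(y)$, and a selector from it could land outside the fibre itself.

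First I would form the Borel set
$$
\Gamma:=\{(y,z,x)\in Y\times Z\times X\colon f_z(x)=y\}
$$
in the Polish space $Y\times Z\times X$, and invoke the classical refinement theorem of descriptive set theory (see, e.g., \cite[Theorem~13.1]{Kech}): $\Gamma$ carries a Polish topology $\tau$, finer than the subspace topology and generating the same Borel $\sigma$-algebra. The projection $p\colon \Gamma\to Y\times Z$, $(y,z,x)\mapsto(y,z)$, is continuous in the subspace topology and hence in $\tau$, so each fibre $p^{-1}(y,z)$ is $\tau$-closed in $\Gamma$. Next, the continuous injection $\phi_{y,z}\colon X\to\Gamma$, $x\mapsto(y,z,x)$, pushes $\mu^y_z$ forward to $\widetilde\mu^{y,z}:=\mu^y_z\circ\phi_{y,z}^{-1}$, a Borel probability measure on $\Gamma$ concentrated on $p^{-1}(y,z)$. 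For any Borel $B\subset\Gamma$ one has $\widetilde\mu^{y,z}(B)=\mu^y_z(\{x\colon (y,z,x)\in B\})$, so Remark~\ref{rem3.2} in its Borel form, together with the characterization of Borel measurability into $\mathcal{P}$ recorded in Section~2, shows that $(y,z)\mapsto\widetilde\mu^{y,z}$ is Borel into $\mathcal{P}(\Gamma,\tau)$.

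Consider the multivalued map $F(y,z):=\operatorname{supp}_\tau(\widetilde\mu^{y,z})$, whose values are nonempty $\tau$-closed subsets of the $\tau$-closed set $p^{-1}(y,z)$. For every $\tau$-open $V\subset\Gamma$,
$$
\{(y,z)\colon F(y,z)\cap V\neq\emptyset\}=\{(y,z)\colon\widetilde\mu^{y,z}(V)>0\}
$$
is Borel. Kuratowski--Ryll-Nardzewski applied in the Polish space $(\Gamma,\tau)$ therefore yields a Borel selector $s\colon Y\times Z\to\Gamma$ with $s(y,z)\in p^{-1}(y,z)$, and setting $g(z,y):=\pi_X(s(y,z))$, where $\pi_X\colon Y\times Z\times X\to X$ denotes the coordinate projection, produces the required Borel map: by construction $s(y,z)=(y,z,g(z,y))\in\Gamma$, so $f_z(g(z,y))=y$ for every $(y,z)$.

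The only delicate ingredient is the topology refinement on~$\Gamma$: without it the support of $\widetilde\mu^{y,z}$ in the ambient topology of $Y\times Z\times X$ is only contained in $\{y\}\times\{z\}\times\overline{f_z^{-1}(y)}$, which can strictly exceed the fibre $p^{-1}(y,z)$, and a direct appeal to a selection theorem would at best deliver a map $g$ with $g(z,y)\in\overline{f_z^{-1}(y)}$ rather than in the fibre itself.
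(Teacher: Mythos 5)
Your argument is correct, but it takes a genuinely different route from the paper's. The paper reduces the statement to the Blackwell--Ryll-Nardzewski theorem from \cite{BR}: a Borel set $S\subset X\times U$ whose sections satisfy $\mu^u(S_u)>0$ for a family of measures measurable with respect to a countably generated sub-$\sigma$-algebra $\mathcal{A}\subset\mathcal{B}(U)$ contains the graph of an $\mathcal{A}$-measurable map. It applies this with $U=Z\times X$, with $\mathcal{A}$ generated by $(z,x)\mapsto(z,f_z(x))$, with $\mu^{(z,x)}=\mu_z^{f_z(x)}$ and $S=\{(z,x,v)\colon f_z(v)=f_z(x)\}$, and then obtains $g$ by factoring the resulting $\mathcal{A}$-measurable selection through $(z,f_z(x))$. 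You instead work directly on the graph $\Gamma$ of $f$, refine its topology to a Polish one generating the same Borel sets so that the fibres of the projection $p$ become closed, transfer the conditional measures to $\Gamma$, and select from their supports via Kuratowski--Ryll-Nardzewski; the weak measurability of the support multifunction comes from $\{\widetilde\mu^{y,z}(V)>0\}$ being Borel, which is exactly the Borel case of Remark~\ref{rem3.2} applied with parameter space $Y\times Z$. Your route avoids the Blackwell--Ryll-Nardzewski graph theorem as a black box and produces $g$ on all of $Y\times Z$ directly, without the factorization step, at the price of the topology-refinement device; the paper's route is shorter given the cited theorem and stays entirely within the conditional-measure formalism. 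Your closing remark correctly identifies the one genuinely delicate point: without the refinement, the support of $\widetilde\mu^{y,z}$ in the ambient topology only lands in the closure of the fibre, so the selector could miss $f_z^{-1}(y)$ itself.
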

\begin{proof}
We shall use the following result of
Blackwell and Ryll-Nardzewski \cite{BR}
(see also \cite[Corollary~18.7]{Kech} or
\cite[Exercise~10.10.47]{B07}, where the hint contains the proof).
For our convenience we change their notation of spaces.
Suppose that $U$ and $X$ are Borel sets in
Polish spaces, $\mathcal{A}$ is a countably generated sub-$\sigma$-algebra
in $\mathcal{B}(U)$ and for each $u\in U$ there is a measure $\mu^u\in \mathcal{P}(X)$ such that
the function $u\mapsto \mu^u(B)$ is $\mathcal{A}$-measurable for every set $B\in \mathcal{B}(X)$.
Let $S\subset X\times U$ be a set such that $\mu^u(S_u)>0$ for all $u\in U$,
where $S_u=\{x\in X\colon (x,u)\in S\}$. Then $S$ contains the graph
of an $(\mathcal{A},\mathcal{B}(X))$-measurable mapping $F\colon U\to X$.

We apply this result in the situation where $U=Z\times X$,
$\mathcal{A}$ is the sub-$\sigma$-algebra in $\mathcal{B}(Z\times X)$ generated by
the mapping
$$
h\colon Z\times X\to Z\times Y, \quad (z,x)\mapsto (z, f_z(x)),
$$
$$
\mu^u=\mu^{f_z(x)}_z, \quad u=(z,x),
$$
 and
$$
S=\{(z,x,v)\in Z\times X\times X\colon f_z(v)=f_z(x)\}.
$$
The section $S_u$ is defined by
$$
S_u=S_{z,x}=\{v\in X\colon f_z(v)=f_z(x)\}=f_z^{-1}(f_z(x)),
$$
hence $\mu^u(S_u)=\mu^{f_z(x)}_z(f_z^{-1}(f_z(x)))=1$. By the cited result there is
a mapping $F\colon Z\times X\to X$ with the graph in $S$ such that $F$ is $\mathcal{A}$-measurable.
The latter means that there is a Borel mapping $g\colon Z\times Y\to X$ such that
$F(z,x)=g(h(z,x))$. Since the graph of $F$ is contained in~$S$, by the definition of $h$ we obtain
$$
f_z(g(z,f_z(x)))=f_z(x) \quad \forall x\in X, z\in Z.
$$
It follows that $f_z(g(z,y))=y$ for all $z\in Z$ and $y\in Y$.
\end{proof}

It is known that in general there is no $g$ with the stated properties
(see, e.g., \cite[\S6.9]{B07}). A~sufficient condition
for the existence of $g$ is this: for each $y\in Y$ and $z\in Z$ the set $f_z^{-1}(y)$
is a countable union of compact sets. Indeed, we consider again the Borel mapping
$h\colon (z,x)\mapsto (z,f_z(x))$ and observe that the sets $h^{-1}(z,y)$ are countable unions
of compact sets. Hence by a classical result (see Theorem~C in the next section)
there is a Borel mapping $g\colon Z\times Y\to X$ with the graph
contained in the set $\{(z,y,x)\colon f_z(x)=y\}$.

\section{Kantorovich problems with a parameter}

We now turn to Kantorovich optimal plans depending on a parameter.

Let $X$ and $Y$ be completely regular Souslin spaces
(for example, Polish spaces). The corresponding spaces of Borel
probability  measures $\mathcal{P}(X)$ and $\mathcal{P}(Y)$
will be equipped with their
weak topologies (making them Souslin or Polish spaces, respectively).
By $\pi_X$ and $\pi_Y$ we denote the projections of $X\times Y$ on $X$ and~$Y$.

For any pair of measures $\mu\in \mathcal{P}(X)$ and
$\nu\in \mathcal{P}(Y)$, the set
$$
\Pi(\mu,\nu)=\{\sigma\in \mathcal{P}(X\times Y)\colon
\sigma\circ \pi_X^{-1}=\mu, \sigma\circ \pi_Y^{-1}=\nu\}
$$
is  convex and compact in the weak topology, which follows from Prohorov's theorem.
This set is not empty: it always contains the product of $\mu$ and $\nu$.

Recall that a function $f$ is lower semicontinuous if the sets $\{f\le c\}$ are closed.
It is known (see \cite[Corollary~4.3.4]{B18}) that
if $f$ is a bounded lower semicontinuous function on $X$ and Borel
probability measures $\mu_n$ on~$X$ converge weakly to~$\mu$, then
$$
\int_X f\, d\mu\le \liminf_{n\to\infty} \int_X f\, d\mu_n.
$$

Given a lower semicontinuous cost function $h\ge 0$ on $X\times Y$ and
a pair of measures $\mu\in \mathcal{P}(X)$ and
$\nu\in \mathcal{P}(Y)$,
in the aforementioned Kantorovich problem of finding
the infimum of $K_h(\mu,\nu)$ of the quantity
$I_h(\sigma)$
over all measures $h\in \Pi(\mu,\nu)$ the minimum
is attained if there is a measure
$\sigma$ with $I_h(\sigma)<\infty$ (which is always true if $h$ is bounded).

Let $(T,\mathcal{T})$ be a measurable space. In the case where $T$ is a topological space
we assume that $\mathcal{T}$ is its Borel $\sigma$-algebra $\mathcal{B}(T)$.

Assume also that
 $$
 h\colon T\times X\times Y\to [0,+\infty)
 $$
is a $\mathcal{T}\otimes\mathcal{B}(X)\otimes\mathcal{B}(Y)$-measurable
function such that $h_t\colon (x,y)\mapsto h(t,x,y)$ is lower
semicontinuous for  each~$t$.

Thus, we obtain a Kantorovich problem with a parameter. Dependence on a parameter
appears even for a single cost function if marginal distributions depend on~$t$.
We consider the case where both marginals and the cost function depend on $t$.

Let
$t\mapsto \mu_t$, $T\to \mathcal{P}(X)$  be a
$(\mathcal{T},\mathcal{B}(\mathcal{P}(X)))$-measurable mapping and let
$t\mapsto \nu_t$, $T\to \mathcal{P}(Y)$  be a
$(\mathcal{T},\mathcal{B}(\mathcal{P}(Y)))$-measurable mapping.

\begin{theorem}\label{tmain1}
Suppose that
the transportation costs $K(t):=K_{h_t}(\mu_t,\nu_t)$ are finite
and the cost functions $h_t\colon (x,y)\mapsto h(t,x,y)$ are continuous.
Then the function
$K$  is $(\mathcal{T}, \mathcal{B}(\mathcal{P}(X\times Y)))$-measurable.
In addition, one can choose optimal measures $\sigma_t$
such that the mapping $t\mapsto \sigma_t$ is measurable with respect
to $\sigma(\mathcal{S}(\mathcal{T}))$ and $\mathcal{B}(\mathcal{P}(X\times Y))$.
\end{theorem}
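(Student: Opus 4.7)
The plan is to fit the parametric Kantorovich problem into the framework of compact-valued measurable multifunctions. I would introduce $F(t,\sigma):=\int h_t\,d\sigma$ on $T\times\mathcal{P}(X\times Y)$ and the correspondence $\Gamma(t):=\Pi(\mu_t,\nu_t)$, then establish (i) joint measurability of $F$, (ii) Borel graph and compact values of $\Gamma$, (iii) $\mathcal{T}$-measurability of the value $K(t)=\inf_{\sigma\in\Gamma(t)}F(t,\sigma)$, and (iv) existence of a $\sigma(\mathcal{S}(\mathcal{T}))$-measurable optimal selection $t\mapsto\sigma_t$.

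For (i), I would run a monotone class argument in the spirit of Lemma~\ref{lem2.1}: the class of bounded $\mathcal{T}\otimes\mathcal{B}(X\times Y)$-measurable $h$ for which $(t,\sigma)\mapsto\int h_t\,d\sigma$ is $\mathcal{T}\otimes\mathcal{B}(\mathcal{P}(X\times Y))$-measurable contains all products $\varphi(t)\psi(x,y)$ (since the integral factors), is closed under uniform and bounded monotone limits, and hence exhausts all bounded jointly measurable $h$; truncation $h\wedge N$ followed by a monotone limit handles the unbounded case. For (ii), the graph $\{(t,\sigma):\sigma\circ\pi_X^{-1}=\mu_t,\ \sigma\circ\pi_Y^{-1}=\nu_t\}$ is Borel because the pushforward maps $\sigma\mapsto\sigma\circ\pi_X^{-1}$ and $\sigma\mapsto\sigma\circ\pi_Y^{-1}$ are continuous and equality of probability measures can be tested on a countable separating family from $C_b$; compactness of $\Gamma(t)$ is immediate from Prohorov's theorem, since the pair $\{\mu_t,\nu_t\}$ is automatically tight on a Souslin space.

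For (iii) I would truncate: set $h^N=h\wedge N$. Each $h^N_t$ is bounded continuous, so $F^N(t,\cdot)$ is \emph{continuous} on $\mathcal{P}(X\times Y)$, and $K^N(t):=\inf_{\sigma\in\Gamma(t)}F^N(t,\sigma)$ coincides with the infimum over any countable dense subset of the compact set $\Gamma(t)$. A Castaing-type representation of $\Gamma$ would then provide a countable family of measurable selections $\sigma_n\colon T\to\mathcal{P}(X\times Y)$ with $\sigma_n(t)\in\Gamma(t)$ and $\{\sigma_n(t)\}$ dense in $\Gamma(t)$ for every~$t$, yielding
\[
K^N(t)=\inf_n F^N(t,\sigma_n(t)),
\]
a countable infimum of $\mathcal{T}$-measurable functions. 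The identity $K^N\uparrow K$ would be established by extracting a weakly convergent subsequence of truncated minimizers $\sigma^{(N)}\in\Gamma(t)$ with limit $\sigma^\infty\in\Gamma(t)$; fixing $M$ and using the lower semicontinuity of $\sigma\mapsto\int h^M_t\,d\sigma$ one gets $\lim_N K^N(t)\ge\int h^M_t\,d\sigma^\infty$, and letting $M\to\infty$ yields $\lim_N K^N(t)\ge\int h_t\,d\sigma^\infty\ge K(t)$, while $K^N\le K$ is trivial. Hence $K$ is $\mathcal{T}$-measurable. For (iv), the argmin correspondence $t\mapsto\{\sigma\in\Gamma(t):F(t,\sigma)=K(t)\}$ has nonempty compact values and a graph in $\sigma(\mathcal{S}(\mathcal{T}))\otimes\mathcal{B}(\mathcal{P}(X\times Y))$ (the intersection of Graph$(\Gamma)$ with the level set $\{(t,\sigma):F(t,\sigma)=K(t)\}$), so a measurable choice theorem in the Souslin setting, of the type used in the proof of Theorem~\ref{t1}, produces the required $\sigma(\mathcal{S}(\mathcal{T}))$-measurable selection.

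The hard part will be the Castaing step in (iii) when $X,Y$ are only completely regular Souslin rather than Polish: for Polish spaces, $\mathcal{P}(X\times Y)$ is Polish and the classical Castaing theorem applies directly, but in the Souslin setting I would need to argue that the compact values of $\Gamma$ are metrizable for the weak topology, exploiting that this topology is generated by a countable separating family in $C_b(X\times Y)$ and is therefore metrizable on any uniformly tight compact set, and moreover that the dense selections can be taken $\mathcal{T}$-measurable rather than merely $\sigma(\mathcal{S}(\mathcal{T}))$-measurable (so that (iii) genuinely yields $\mathcal{T}$-measurability and not just Souslin-extended measurability). A secondary delicate point is the truncation limit $K^N\uparrow K$: it cannot be read off from monotone convergence applied to a fixed minimizer of the original problem, since the truncated minimizers may differ substantially, and the compactness/lsc argument indicated above is genuinely needed.
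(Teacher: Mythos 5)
Your overall architecture -- joint measurability of $(t,\sigma)\mapsto\int h_t\,d\sigma$ by a monotone class argument, truncation to bounded continuous costs, $K^N(t)=\inf_n F^N(t,\sigma_n(t))$ over countable dense selections of $\Pi(\mu_t,\nu_t)$, the monotone limit $K^N\uparrow K$ via compactness of $\Pi(\mu_t,\nu_t)$ and lower semicontinuity, and finally the argmin correspondence treated by projection plus a measurable choice theorem over $\sigma(\mathcal{S}(\mathcal{T}))$ -- is the same as the paper's, and your limit argument for $K^N\uparrow K$ is correct (the paper's Lemma~\ref{lem-lim} proves a more general statement via duality, but for continuous $h$ your direct argument suffices). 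The one step you leave genuinely unresolved, and which carries the whole first assertion, is the ``Castaing step'': producing $\mathcal{T}$-measurable maps $\sigma_n\colon T\to\mathcal{P}(X\times Y)$ with $\{\sigma_n(t)\}$ dense in $\Pi(\mu_t,\nu_t)$ for an \emph{arbitrary} measurable space $(T,\mathcal{T})$. A graph in $\mathcal{T}\otimes\mathcal{B}(\mathcal{P}(X\times Y))$ with compact values does not by itself license the Castaing theorem: that theorem (Theorem~B of the paper) requires $\{t\colon \Gamma(t)\cap U\neq\emptyset\}\in\mathcal{T}$ for open $U$, and the projection of a set in $\mathcal{T}\otimes\mathcal{B}$ a priori lands only in $\mathcal{S}(\mathcal{T})$, not in $\mathcal{T}$. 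Your proposed remedies (metrizability of the compact values, checking that the selections ``can be taken $\mathcal{T}$-measurable'') do not address this; metrizability is not the obstruction.

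The fix, which is the paper's key move, is to factor through the universal Polish parameter space: the set $B$ of triples $(\mu,\nu,\sigma)\in\mathcal{P}(X)\times\mathcal{P}(Y)\times\mathcal{P}(X\times Y)$ with $\sigma\circ\pi_X^{-1}=\mu$, $\sigma\circ\pi_Y^{-1}=\nu$ is Borel with compact sections, so the Novikoff--Kunugui uniformization (Theorem~A, which needs a Souslin/Polish base, not a general $(T,\mathcal{T})$) yields \emph{Borel} maps $\Phi_n\colon\mathcal{P}(X)\times\mathcal{P}(Y)\to\mathcal{P}(X\times Y)$ with $\{\Phi_n(\mu,\nu)\}$ dense in $\Pi(\mu,\nu)$; then $\sigma_n(t):=\Phi_n(\mu_t,\nu_t)$ is $\mathcal{T}$-measurable by composition. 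With this insertion your steps (iii) and (iv) go through as written. Two smaller remarks: the theorem as proved really lives over Polish (or Luzin) $X,Y$ -- for merely Souslin $X,Y$ the target $\mathcal{P}(X\times Y)$ is no longer standard Borel, Theorem~A is unavailable, and one only gets the weaker Theorem~\ref{tmain3}; and the paper additionally gives a second, more constructive route to $\mathcal{T}$-measurable \emph{approximate} optimal plans (reduction to compactly supported marginals and $1$-Lipschitz costs, a partition of $T$ by Hausdorff-close supports and sup-close costs, and a measurable extraction of convergent subsequences), which you do not need for the statement as given but which is what the paper uses to finish the selection part of Theorem~\ref{tmain1}.
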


In the next theorem we remove the  assumption of continuity of
 cost functions and reinforce the conclusion by the existence
 of Borel measurable selections, but $T$ is required to be a Souslin space.

\begin{theorem}\label{tmain2}
Suppose that $T$ is a Souslin space,
$t\mapsto \mu_t$ and $t\mapsto \nu_t$ are Borel mappings with values
in the spaces $\mathcal{P}(X)$ and $\mathcal{P}(Y)$, respectively,
and the corresponding transportation costs $K_{h_t}(\mu_t,\nu_t)$ are finite.
Then
the function $t\mapsto K_{h_t}(\mu_t,\nu_t)$ is Borel measurable and
there is a mapping $t\mapsto \sigma_t$, $T\to \mathcal{P}(X\times Y)$,
measurable with respect to
$\mathcal{B}(T)$ and $\mathcal{B}(\mathcal{P}(X\times Y))$, such that
for all $t\in T$ we have
$$
\sigma_t\in \Pi(\mu_t,\nu_t), \quad \int h(t,x,y)\, \sigma_t(dx dy)=K_{h_t}(\mu_t,\nu_t).
$$
\end{theorem}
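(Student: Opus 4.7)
The plan is to realize both $K$ and the selection of optimizers as a projection and a uniformization, respectively, of a Borel set in $T\times \mathcal{P}(X\times Y)$ whose $t$-sections are compact, and then invoke an Arsenin--Kunugui style projection/selection theorem to pass from Souslin to Borel measurability.

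I would first check that the cost functional
$$
\Phi(t,\sigma):=\int_{X\times Y} h(t,x,y)\,\sigma(dx\,dy)
$$
is $\mathcal{B}(T)\otimes \mathcal{B}(\mathcal{P}(X\times Y))$-measurable. A monotone class argument starting from indicators $h(t,x,y)=I_A(t)I_B(x,y)$, using that $\mathcal{B}(\mathcal{P}(X\times Y))$ is generated by the evaluations $\sigma\mapsto \sigma(B)$, handles bounded $h$; monotone convergence extends the conclusion to the given nonnegative $h$. Next, the graph of admissible couplings
$$
G:=\{(t,\sigma)\in T\times \mathcal{P}(X\times Y):\sigma\circ \pi_X^{-1}=\mu_t,\ \sigma\circ \pi_Y^{-1}=\nu_t\}
$$
is Borel, since the marginal maps are continuous, $t\mapsto \mu_t$ and $t\mapsto \nu_t$ are Borel by hypothesis, and the diagonals in $\mathcal{P}(X)\times \mathcal{P}(X)$ and $\mathcal{P}(Y)\times \mathcal{P}(Y)$ are Borel because on completely regular Souslin spaces there exist countable families in $C_b$ separating probability measures. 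By Prohorov's theorem each section $\Pi(\mu_t,\nu_t)$ is compact.

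For the Borel measurability of $K$ I consider, for each $c\in \mathbb{R}$,
$$
A_c:=\{(t,\sigma)\in G:\Phi(t,\sigma)\le c\}.
$$
This set is Borel, and its $t$-section is a closed subset of the compact set $\Pi(\mu_t,\nu_t)$ by lower semicontinuity of $\Phi(t,\cdot)$ (a consequence of lower semicontinuity of $h_t$ via the portmanteau-type inequality recalled just before the theorem). The $t$-sections of $A_c$ are thus compact, so the Arsenin--Kunugui projection theorem yields that $\pi_T(A_c)=\{t:K(t)\le c\}$ is Borel in $T$. The set $\mathrm{Opt}:=\{(t,\sigma)\in G:\Phi(t,\sigma)=K(t)\}$ is now Borel with nonempty compact $t$-sections (the infimum is attained by lower semicontinuity and compactness), and the Borel uniformization form of Arsenin--Kunugui produces a Borel map $t\mapsto \sigma_t$ with $\sigma_t\in \Pi(\mu_t,\nu_t)$ and $\Phi(t,\sigma_t)=K(t)$.

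The main obstacle is precisely the upgrade from Souslin to Borel measurability of $K$: a naive projection of a Borel set yields only a Souslin set in $T$, and it is exactly the compactness of the sections of $A_c$---supplied by Prohorov together with the lower semicontinuity of $h_t$ (which is the point where that hypothesis is genuinely used)---that allows Arsenin--Kunugui to improve the conclusion to Borel. A secondary technical point is that $T$ is only assumed Souslin rather than Polish, so one must either invoke an extension of Arsenin--Kunugui to analytic fibered spaces, or embed $T$ as a Souslin subset of a Polish ambient space and verify that the argument survives the restriction; in either case standard descriptive set theory suffices.
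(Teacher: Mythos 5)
Your proposal is correct, and its final step --- applying the uniformization theorem for Borel sets with $\sigma$-compact sections (the paper's Theorem~A) to the set of optimizers --- is exactly the paper's Lemma~\ref{lastlem}. Where you genuinely diverge is in proving the Borel measurability of $K$: you apply Theorem~A a second time, to the sublevel sets $A_c$, whose $t$-sections are compact by Prohorov's theorem together with the lower semicontinuity of $\sigma\mapsto\int h_t\,d\sigma$, and you read off $\{t\colon K(t)\le c\}$ as the (Borel) projection of $A_c$. Two points should be made explicit there: the identification $\pi_T(A_c)=\{t\colon K(t)\le c\}$ uses attainment of the infimum (guaranteed by $K(t)<\infty$, compactness of $\Pi(\mu_t,\nu_t)$ and lower semicontinuity), and the lower semicontinuity of $\sigma\mapsto\int h_t\,d\sigma$ for unbounded nonnegative lower semicontinuous $h_t$ follows from the bounded case by truncation and monotone convergence. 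The paper instead obtains the measurability of $K$ by a chain of reductions --- truncation to bounded costs and a limit lemma (Lemma~\ref{lem-lim}), passage to compactly supported marginals via a parametrized Ulam argument (Lemmas~\ref{lem1} and~\ref{lem2}), redefinition of the cost outside the compact supports, Lipschitz regularization with Borel approximating selections (Lemmas~\ref{lem3} and~\ref{lem-appr}, which themselves invoke Theorem~A) --- ending in the continuous case of Theorem~\ref{tmain1}, where $K(t)$ is a countable infimum of integrals over a dense measurable family in $\Pi(\mu_t,\nu_t)$. Your route is shorter and self-contained under the stated hypotheses ($T$ Souslin and $X,Y$ Polish, so that $\mathcal{P}(X\times Y)$ is Polish as Theorem~A requires); the paper's longer route buys machinery that also works for arbitrary measurable parameter spaces in the continuous case (Theorem~\ref{tmain1}) and is reused for Theorem~\ref{tmain3} and for the discussion of Souslin marginal spaces, settings in which no Borel projection theorem is available.
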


\begin{corollary}
In the previous theorem, there is a sequence of Borel measurable mappings
$\Phi_n\colon T\to \mathcal{P}(X\times Y)$ such that, for every $t\in T$,
 the sequence $\{\Phi_n(t)\}$ is dense in the convex compact set $M_t$ of $h_t$-optimal measures
 in $\Pi(\mu_t,\nu_t)$.
\end{corollary}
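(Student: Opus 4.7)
The plan is to realize $t\mapsto M_t$ as a Borel measurable closed-valued multifunction and invoke a Castaing-type representation to extract a countable dense family of Borel selections.

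First I would verify that the graph
\[
\Gamma:=\{(t,\sigma)\in T\times \mathcal{P}(X\times Y)\colon \sigma\in M_t\}
\]
is Borel. Writing $\Gamma=A\cap B$ with $A=\{(t,\sigma)\colon \sigma\in\Pi(\mu_t,\nu_t)\}$ and $B=\{(t,\sigma)\colon I_{h_t}(\sigma)=K_{h_t}(\mu_t,\nu_t)\}$, the set $A$ is Borel because the marginal conditions reduce to countably many identities $\int \varphi_j\circ\pi_X\, d\sigma=\int\varphi_j\, d\mu_t$ and $\int\psi_k\circ\pi_Y\, d\sigma=\int\psi_k\, d\nu_t$ for fixed countable separating families $\{\varphi_j\}\subset C_b(X)$ and $\{\psi_k\}\subset C_b(Y)$, and the Borel dependence of $\mu_t,\nu_t$ on~$t$ makes each such equation Borel in $(t,\sigma)$. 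The set $B$ is Borel because $t\mapsto K_{h_t}(\mu_t,\nu_t)$ is Borel by Theorem~\ref{tmain2}, and $(t,\sigma)\mapsto I_{h_t}(\sigma)$ is jointly Borel: one approximates the nonnegative lower semicontinuous $h_t$ from below by an increasing sequence of bounded functions jointly measurable in $(t,x,y)$ and continuous in $(x,y)$, and passes to the limit by monotone convergence.

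Next, each $M_t$ is a nonempty compact convex subset of $\mathcal{P}(X\times Y)$, being the intersection of the compact set $\Pi(\mu_t,\nu_t)$ with the closed level set where the lower semicontinuous functional $I_{h_t}$ attains its minimum. Since the multifunction $t\mapsto M_t$ has Borel graph and closed values in the Souslin space $\mathcal{P}(X\times Y)$, a Castaing representation theorem yields a sequence $\Phi_n\colon T\to\mathcal{P}(X\times Y)$ of Borel measurable selections with $\{\Phi_n(t)\}_n$ dense in $M_t$ for every $t\in T$. To apply the standard Polish-target formulation of Castaing's theorem one first Borel-isomorphically embeds $\mathcal{P}(X\times Y)$ into a Polish space, performs the selection there, and transports the selections back; explicitly, one fixes a countable base $\{V_m\}$ of $\mathcal{P}(X\times Y)$, checks that $\{t\colon M_t\cap V_m\ne\emptyset\}$ (the projection of the Borel set $\Gamma\cap(T\times V_m)$) is Borel in this embedding, and chooses a Borel selection into $M_t\cap V_m$ on that set, extending it by an arbitrary Borel selection from $M_t$ provided by Theorem~\ref{tmain2} elsewhere.

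The main technical obstacle is the joint Borel measurability of $(t,\sigma)\mapsto I_{h_t}(\sigma)$ under the sole hypothesis that $h$ is jointly Borel with $h_t$ merely lower semicontinuous in $(x,y)$; once this is secured via the approximation indicated above, the monotone class theorem and the Kantorovich compactness results of Section~4 make Castaing's representation applicable and the corollary follows.
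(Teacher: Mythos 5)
Your overall strategy --- show that the graph $\Gamma=\{(t,\sigma)\colon\sigma\in M_t\}$ is Borel with compact sections and then extract a countable dense family of Borel selections --- is exactly the paper's route: Lemma~\ref{lastlem} establishes that this set is Borel (citing a known result for the joint Borel measurability of $(t,\sigma)\mapsto\int h_t\,d\sigma$), and the corollary then follows from the observation made after Theorem~B that Theorem~A (Novikoff--Kunugui) supplies not just one uniformizing Borel map but a whole sequence $f_n$ with $\{f_n(t)\}$ dense in each section. Your decomposition $\Gamma=A\cap B$ and the treatment of $A$ via countably many marginal identities against separating families are fine.

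The one step that, as written, would fail is your justification that $\{t\colon M_t\cap V_m\neq\emptyset\}$ is Borel ``as the projection of the Borel set $\Gamma\cap(T\times V_m)$.'' Projections of Borel sets are in general only Souslin (analytic), not Borel --- this is precisely the phenomenon the paper is at pains to avoid, and it is why Theorem~\ref{tmain3} only achieves $\sigma(\mathcal{S}(T))$-measurability. The projection here \emph{is} Borel, but the reason is the $\sigma$-compactness of the sections: $M_t\cap V_m$ is the intersection of the compact set $M_t$ with an open subset of a Polish space, hence $\sigma$-compact, so Theorem~A applies. Once you invoke Theorem~A you no longer need the Castaing machinery at all: applied directly to $\Gamma$, whose sections $M_t$ are nonempty and compact, it already yields a sequence of Borel mappings $\Phi_n\colon T\to\mathcal{P}(X\times Y)$ with $\{\Phi_n(t)\}$ dense in $M_t$, which is the paper's (implicit) one-line proof. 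A smaller remark: your approximation argument for the joint Borel measurability of $(t,\sigma)\mapsto I_{h_t}(\sigma)$ is workable but hides the same difficulty --- the joint measurability of the inf-convolution approximations is exactly the content of Lemma~\ref{lem3}, which again rests on Theorem~A --- whereas the paper simply cites a known result for this point.
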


\begin{corollary}
In the previous theorem, the optimal plans $\sigma_t$ admit disintegrations
$$
\sigma_t=\int_Y \sigma_t^y \, \nu_t(dy)
$$
 with Borel probability measures $\sigma_t^y$ on $X$ that are Borel measurable in~$(t,y)$.
\end{corollary}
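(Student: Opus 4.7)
The plan is to combine Theorem~\ref{tmain2} with the Corollary immediately following Theorem~\ref{t2}. Theorem~\ref{tmain2} already produces a Borel measurable selection $t\mapsto \sigma_t$ from $T$ to $\mathcal{P}(X\times Y)$ with $\sigma_t\in \Pi(\mu_t,\nu_t)$ optimal for $h_t$. Thus we have a one-parameter family of Borel probability measures on the product $X\times Y$, and the task reduces to disintegrating these measures along the projection $\pi_Y$ in a way that is jointly measurable in~$(t,y)$.

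For this second step, I would apply the Corollary following Theorem~\ref{t2} with $X_1=X$, $X_2=Y$, parameter space $(T,\mathcal{B}(T))$, and the measure family $\{\sigma_t\}_{t\in T}$. That corollary delivers a mapping $(t,y)\mapsto \sigma_t^y\in \mathcal{P}(X)$ which is $\mathcal{B}(T)\otimes \mathcal{B}(Y)$-measurable into $\mathcal{B}(\mathcal{P}(X))$, with the $\sigma_t^y$ serving as regular conditional probabilities for $\sigma_t$ along~$\pi_Y$. Since $\sigma_t\circ \pi_Y^{-1}=\nu_t$ by the marginal condition, the defining disintegration identity for conditional measures immediately rewrites as
$$
\sigma_t=\int_Y \sigma_t^y \, \nu_t(dy),
$$
which is the claimed formula.

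The last wrinkle is to upgrade product measurability to joint Borel measurability on $T\times Y$. Since both $T$ and~$Y$ are Souslin and each carries a countable point-separating family in~$C_b$, the product Borel $\sigma$-algebra $\mathcal{B}(T)\otimes \mathcal{B}(Y)$ coincides with $\mathcal{B}(T\times Y)$ (a standard fact recorded in Chapter~6 of \cite{B07}). Hence the constructed map $(t,y)\mapsto \sigma_t^y$ is Borel measurable on~$T\times Y$, completing the argument.

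I do not anticipate any genuine obstacle here: the nontrivial work is already packaged in Theorem~\ref{tmain2} (existence of a Borel selection of optimal plans) and in Theorem~\ref{t2} together with its Corollary (joint Borel disintegration on a product space with a measurable parameter). The present corollary is essentially the concatenation of these two results, plus the routine identification of the product and Borel $\sigma$-algebras on~$T\times Y$. If any technical point warrants extra care, it is merely the verification that one may take the parameter space in the conditional-measure corollary to be the given Souslin space $T$ equipped with its Borel $\sigma$-algebra, which is explicit in the statement of that corollary.
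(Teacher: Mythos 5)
Your proposal is correct and is exactly the argument the paper intends (the corollary is left without an explicit proof precisely because it is the concatenation of Theorem~\ref{tmain2} with the conditional-measure corollary following Theorem~\ref{t2}, applied to the family $t\mapsto\sigma_t$ with $X_1=X$, $X_2=Y$, using that $\sigma_t\circ\pi_Y^{-1}=\nu_t$). The final identification $\mathcal{B}(T)\otimes\mathcal{B}(Y)=\mathcal{B}(T\times Y)$ for Souslin spaces is indeed the standard fact needed to read product measurability as joint Borel measurability.
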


For Souslin spaces $X$ and $Y$ we have the following result.

\begin{theorem}\label{tmain3}
Let $X$ and $Y$ be completely regular Souslin spaces and let $T$ be a Souslin space.
Let $(x,y)\mapsto h(t,x,y)$ be continuous for every~$t$ and
let $t\mapsto\mu_t$ and $t\mapsto\nu_t$ be Borel measurable. Then the function $t\mapsto K(t)$
is measurable with respect to~$\sigma(\mathcal{S}(T))$.
\end{theorem}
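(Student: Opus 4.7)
The plan is to exhibit each sublevel set $\{t\in T : K(t)<c\}$ as the projection to $T$ of a Borel subset of the Souslin product space $W:=T\times \mathcal{P}(X\times Y)$, so that it automatically lies in $\mathcal{S}(T)\subset\sigma(\mathcal{S}(T))$. The first step is to show that the constraint graph
$$
G:=\{(t,\sigma)\in W : \sigma\circ\pi_X^{-1}=\mu_t,\ \sigma\circ\pi_Y^{-1}=\nu_t\}
$$
is Borel in $W$. Indeed, the marginal mappings $\sigma\mapsto\sigma\circ\pi_X^{-1}$ and $\sigma\mapsto\sigma\circ\pi_Y^{-1}$ are continuous from $\mathcal{P}(X\times Y)$ into $\mathcal{P}(X)$ and $\mathcal{P}(Y)$ respectively; the maps $t\mapsto\mu_t$ and $t\mapsto\nu_t$ are Borel by assumption; and the diagonals in $\mathcal{P}(X)\times\mathcal{P}(X)$ and $\mathcal{P}(Y)\times\mathcal{P}(Y)$ are closed (Hausdorff). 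Taking preimages of these closed diagonals under the Borel maps $(t,\sigma)\mapsto(\mu_t,\sigma\circ\pi_X^{-1})$ and $(t,\sigma)\mapsto(\nu_t,\sigma\circ\pi_Y^{-1})$ and intersecting yields the Borel set $G$.

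Next I would establish joint Borel measurability of the transportation functional
$$
F(t,\sigma):=\int_{X\times Y} h(t,x,y)\,\sigma(dx\,dy)
$$
on $W$. A monotone class argument is natural: consider the class $\mathcal{H}$ of bounded $\mathcal{B}(T)\otimes\mathcal{B}(X)\otimes\mathcal{B}(Y)$-measurable functions $h$ for which the corresponding $F$ is Borel on $W$. For an elementary product $h(t,x,y)=\varphi(t)I_{B}(x)I_{C}(y)$ with Borel $\varphi$, $B$, $C$ one has $F(t,\sigma)=\varphi(t)\sigma(B\times C)$, and by the fact recalled in Section~2 the map $\sigma\mapsto \sigma(D)$ is Borel on $\mathcal{P}(X\times Y)$ for every $D\in\mathcal{B}(X\times Y)$; hence such products lie in $\mathcal{H}$. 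Since $\mathcal{H}$ is closed under uniformly bounded monotone pointwise limits, the monotone class theorem identifies $\mathcal{H}$ with all bounded $\mathcal{B}(T)\otimes\mathcal{B}(X)\otimes\mathcal{B}(Y)$-measurable functions; truncation $h\wedge n$ and monotone convergence then deliver the conclusion for the unbounded nonnegative cost.

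Finally, for each $c\in\mathbb{R}$ set $E_c:=\{(t,\sigma)\in G : F(t,\sigma)<c\}$, which is Borel in $W$. By the definition of $K$ one has $\{t\in T : K(t)<c\}=\pi_T(E_c)$, and continuous images of Borel sets in Souslin spaces are Souslin, so $\{K<c\}\in \mathcal{S}(T)\subset\sigma(\mathcal{S}(T))$ for every $c$; hence $K$ is $\sigma(\mathcal{S}(T))$-measurable. The main obstacle is the joint measurability in the second step, which depends critically on the completely regular Souslin structure via the Borel measurability of $\sigma\mapsto\sigma(B)$ on $\mathcal{P}(X\times Y)$ and on having $T$, $X$, $Y$ Souslin so that $W$ itself is Souslin (making projections Souslin). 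The continuity of $h_t$ is not actually needed for the measurability conclusion (only the product-measurability of $h$ enters); its genuine role, as in Theorem~\ref{tmain1}, is to guarantee attainment of the infimum on the weakly compact set $\Pi(\mu_t,\nu_t)$ via Prohorov's theorem.
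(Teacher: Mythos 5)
Your proposal is correct, and it takes a genuinely different route from the paper. The paper's proof is a one-line reduction to its Theorem~C: the graph $\{(t,\sigma)\colon \sigma\in\Pi(\mu_t,\nu_t)\}$ is Borel (hence Souslin) in $T\times\mathcal{P}(X\times Y)$, so Theorem~C yields a sequence of $\sigma(\mathcal{S}(T))$-measurable selections $\Psi_n$ with $\{\Psi_n(t)\}$ dense in the compact set $\Pi(\mu_t,\nu_t)$, and then $K(t)=\inf_n I_{h_t}(\Psi_n(t))$, where passing from the dense sequence to the full infimum uses the continuity of $h_t$ (together with the truncation to bounded costs handled elsewhere in Section~5). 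You instead exhibit each strict sublevel set $\{K<c\}$ as the projection of the Borel set $E_c\subset G$ and invoke the stability of Souslin sets under continuous images; the joint Borel measurability of $(t,\sigma)\mapsto\int h_t\,d\sigma$ that you establish by the monotone class theorem is exactly the fact the paper cites from \cite{B18} in Lemma~\ref{lastlem}, and the Borel character of $G$ matches the paper's own claim (you could equally obtain it by testing the marginal equalities against a countable separating family in $C_b$, as in Lemma~\ref{lem2.2}, which sidesteps any worry about product Borel $\sigma$-algebras). The two arguments rest on the same descriptive-set-theoretic engine --- projections of Borel subsets of Souslin products are Souslin, which is also what drives Theorem~C --- but yours is leaner and, as you correctly observe, never uses continuity or even lower semicontinuity of $h_t$, so it proves the $\sigma(\mathcal{S}(T))$-measurability of $K$ for an arbitrary nonnegative jointly Borel cost. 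What the paper's route buys in exchange is the dense family of measurable selections $\Psi_n$ itself, which is reused immediately after Theorem~\ref{tmain3} to select (near-)optimal plans once $K$ is known to be measurable; your argument yields only the measurability of $K$ and a selection theorem would still be needed for that further step.
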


Note that if in the last theorem
the function $t\mapsto K(t)$ is Borel measurable, then there is a sequence
of mappings $\Phi_n\colon T\to \mathcal{P}(X\times Y)$,
measurable with respect to $(\sigma(\mathcal{S}(T)),\mathcal{B}(\mathcal{P}(X\times Y)))$,
 such that, for every $t\in T$,
 the sequence $\{\Phi_n(t)\}$ is dense in the convex compact set $M_t$ of $h_t$-optimal measures
 in $\Pi(\mu_t,\nu_t)$.

\section{Auxiliary results and proofs}

The following general version of the Kantorovich duality for finite nonnegative
lower semicontinuous cost functions holds:
\begin{multline}\label{gen-dual}
K_h(\mu,\nu)=\sup \biggl\{\int \varphi\, d\mu + \int \psi\, d\nu
\colon\\
 \varphi\in C_b(X), \psi\in C_b(Y), \varphi(x)+\psi(y)\le h(x,y)\biggr\}.
\end{multline}
See   \cite{BeigLS14}, \cite{BeigS}, \cite{Kellerer84}, \cite{RR}, and~\cite{V}
for a discussion of this duality; a short derivation of the general case from the case
of bounded continuous cost functions is given in~\cite{BeigS}.
Hence for each $\varepsilon>0$ there are
functions
$\varphi\in C_b(X)$ and $\psi\in C_b(Y)$ such that
$$
\varphi(x)+\psi(y)\le h(x,y) \quad \hbox{for all $x$ and $y$}
$$
 and
$$
K_h(\mu,\nu)\le \int\varphi\, d\mu+\int\psi \, d\nu+\varepsilon.
$$
Moreover, for bounded~$h$, in the right-hand side of (\ref{gen-dual}) one can take the supremum over
 $\varphi$ and $\psi$ such that $|\varphi|\le \|h\|_\infty$,
$|\psi|\le \|h\|_\infty$. This is explained in \cite[Remark~1.13]{V}, but for the
reader's convenience we give a straightforward justification.
We can assume that $\|h\|_\infty=1$.
If a pair $\varphi, \psi$ satisfies the indicated bound,
then, for any number~$t$, the pair $\varphi +t, \psi -t$ also satisfies
this bound and the sum of the corresponding
integrals is the same. Hence we can assume that $\sup_x \varphi(x)=1$.
Hence $\psi(y)\le 0$.
Next, we replace $\varphi$ by $\varphi_1=\max(\varphi, 0)$
and obtain a pair with $\varphi_1(x)+\psi(y)\le h(x,y)$
and $0\le \varphi_1\le 1$ for which
the integral of $\varphi_1$ is not less than that of~$\varphi$.
Finally,  we replace $\psi$ by $\psi_1=\max(\psi, -1)$,
which keeps the upper bound by $h$ and increases the integral.
Hence we obtain a pair with $0\le\varphi_1\le 1$, $-1\le \psi_1\le 0$
and $\varphi_1(x)+\psi_1(y)\le h(x,y)$ for which the sum of the respective
 integrals dominates the original sum.
The next lemma is an immediate corollary of this bound.

\begin{lemma}
Let $h\le 1$. Then for all $\mu_1,\mu_2\in \mathcal{P}(X)$
and $\nu_1,\nu_2\in \mathcal{P}(Y)$ we have
$$
|K_h(\mu_1,\nu_1)-K_h(\mu_2,\nu_2)|\le \|\mu_1-\mu_2\|+\|\nu_1-\nu_2\|.
$$
\end{lemma}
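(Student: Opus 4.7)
The plan is to exploit the dual representation displayed just above the lemma, together with the refinement that for $\|h\|_\infty \le 1$ the supremum in (\ref{gen-dual}) may be taken over pairs $(\varphi,\psi)$ with $0\le\varphi\le 1$ and $-1\le\psi\le 0$, so in particular $|\varphi|\le 1$ and $|\psi|\le 1$. This is exactly the bounded-potentials form of Kantorovich duality derived in the paragraph preceding the lemma, so I may invoke it directly.

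First I would fix $\varepsilon>0$ and, by that duality, choose $\varphi\in C_b(X)$ and $\psi\in C_b(Y)$ with $\varphi(x)+\psi(y)\le h(x,y)$ for all $x,y$, with $|\varphi|\le 1$ and $|\psi|\le 1$, and with
$$
K_h(\mu_1,\nu_1)\le \int\varphi\,d\mu_1+\int\psi\,d\nu_1+\varepsilon.
$$
The key observation is that the admissibility constraint $\varphi(x)+\psi(y)\le h(x,y)$ does not involve the marginals, so the same pair $(\varphi,\psi)$ is admissible for $(\mu_2,\nu_2)$ as well, whence
$$
K_h(\mu_2,\nu_2)\ge \int\varphi\,d\mu_2+\int\psi\,d\nu_2.
$$
Subtracting these two inequalities gives
$$
K_h(\mu_1,\nu_1)-K_h(\mu_2,\nu_2)\le \int\varphi\,d(\mu_1-\mu_2)+\int\psi\,d(\nu_1-\nu_2)+\varepsilon.
$$

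Next, since $|\varphi|\le 1$ and $|\psi|\le 1$, the elementary bound $\bigl|\int f\,d\eta\bigr|\le \|f\|_\infty\,\|\eta\|$ for a signed measure $\eta$ yields
$$
\int\varphi\,d(\mu_1-\mu_2)\le \|\mu_1-\mu_2\|,\qquad \int\psi\,d(\nu_1-\nu_2)\le \|\nu_1-\nu_2\|.
$$
Combining, $K_h(\mu_1,\nu_1)-K_h(\mu_2,\nu_2)\le \|\mu_1-\mu_2\|+\|\nu_1-\nu_2\|+\varepsilon$, and letting $\varepsilon\to 0$ removes the $\varepsilon$. Swapping the roles of $(\mu_1,\nu_1)$ and $(\mu_2,\nu_2)$ gives the reverse inequality, hence the absolute value bound.

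There is no real obstacle here: the only conceptual point is the need for the uniform bound $|\varphi|,|\psi|\le 1$ on the potentials, and this has already been established explicitly in the paragraph just above the lemma. Without that reduction the argument would merely give $|K_h(\mu_1,\nu_1)-K_h(\mu_2,\nu_2)|\le \|\varphi\|_\infty\|\mu_1-\mu_2\|+\|\psi\|_\infty\|\nu_1-\nu_2\|$ with a priori unbounded potentials, so the truncation step is exactly what powers the clean constant~$1$ in the stated Lipschitz estimate.
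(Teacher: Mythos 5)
Your proof is correct and follows essentially the same route as the paper's: invoke the bounded-potentials form of the duality established just before the lemma, use that the same admissible pair $(\varphi,\psi)$ gives a lower bound for $K_h(\mu_2,\nu_2)$, subtract, bound the integrals against $\mu_1-\mu_2$ and $\nu_1-\nu_2$ by the variation norms, and let $\varepsilon\to 0$. The only cosmetic difference is that the paper reduces to the case $K_h(\mu_1,\nu_1)>K_h(\mu_2,\nu_2)$ at the outset while you symmetrize at the end; this is immaterial.
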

\begin{proof}
We can assume that $K_h(\mu_1,\nu_1)>K_h(\mu_2,\nu_2)$.
Let $\varepsilon>0$. There are functions $\varphi\in C_b(X)$, $\psi\in C_b(Y)$  with
$\varphi(x)+\psi(y)\le h(x,y)$,
$|\varphi|\le 1$, $|\psi|\le 1$ such that
$$
K_h(\mu_1,\nu_1)<\int\varphi\, d\mu_1+\int\psi \, d\nu_1+\varepsilon.
$$
Since
$$
K_h(\mu_2,\nu_2)\ge \int\varphi\, d\mu_2+\int\psi \, d\nu_2,
$$
we have
$$
K_h(\mu_1,\nu_1)-K_h(\mu_2,\nu_2)\le \varepsilon +
\int\varphi\, d(\mu_1-\mu_2)+\int\psi \, d(\nu_1-\nu_2),
$$
whence our claim follows with the extra term $\varepsilon$ on the right, so it remains
to let $\varepsilon\to 0$.
\end{proof}

\begin{lemma}\label{lem1}
Let $(T,\mathcal{T})$ be a measurable space, $Z$ a Polish space,
 and let $t\mapsto \mu_t$ be a mapping from $T$ to $\mathcal{P}(Z)$ measurable with respect
 to $\mathcal{T}$ and $\mathcal{B}(\mathcal{P}(Z))$. Then there is a sequence of
 increasing compact sets~$Z_n(t)\subset Z$ such that the sets $\bigcup_t (\{t\}\times Z_n(t))$
 are in $\mathcal{T}\otimes\mathcal{B}(Z)$, the set-valued mapping $t\mapsto Z_n(t)$ is $\mathcal{T}$-measurable,
 the normalized
 restrictions $\mu_t^n$ of $\mu_t$ to $Z_n(t)$ define
   mappings $t\mapsto \mu_t^n$ from $T$ to $\mathcal{P}(Z)$
 measurable in the same sense and $\|\mu_t^n-\mu_t\|\to 0$.

 The same is true if $Z$ is a completely regular Luzin space, hence this is true
 if $Z$ is a Borel set in a Polish space.
 \end{lemma}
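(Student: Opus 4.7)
My plan is to adapt the classical Ulam tightness argument in a way that keeps track of measurability in~$t$. Fix a countable dense set $\{z_i\}_{i\ge 1}$ and a complete metric $d$ on the Polish space~$Z$. For integers $m,k\ge 1$ set $F_{m,k}:=\bigcup_{i\le k}\overline{B(z_i,1/m)}$. Since $F_{m,k}$ is closed and $\mu\mapsto \mu(F)$ is upper semicontinuous on $\mathcal{P}(Z)$ for closed~$F$, the function $t\mapsto\mu_t(F_{m,k})$ is $\mathcal{T}$-measurable. Define
$$
N_{n,m}(t):=\min\{k\ge 1\colon \mu_t(F_{m,k})\ge 1-2^{-n-m-1}\},
$$
a finite $\mathcal{T}$-measurable integer-valued function (finite since $F_{m,k}\uparrow Z$). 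Put $K_n(t):=\bigcap_{m\ge 1}F_{m,N_{n,m}(t)}$; this set is closed and totally bounded, hence compact by completeness of~$Z$, and by countable subadditivity $\mu_t(Z\setminus K_n(t))\le 2^{-n}$. Finally set $Z_n(t):=K_1(t)\cup\cdots\cup K_n(t)$ to enforce monotonicity, keeping $\mu_t(Z_n(t))\ge 1-2^{-n}$.

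Next I verify the three measurability claims. The graph lies in $\mathcal{T}\otimes\mathcal{B}(Z)$ since
$$
\bigcup_{t\in T}(\{t\}\times K_n(t))=\bigcap_{m\ge 1}\bigcup_{j\ge 1}\bigl(\{t\colon N_{n,m}(t)=j\}\times F_{m,j}\bigr),
$$
a countable intersection of countable unions of measurable rectangles. For the open-set measurability of $t\mapsto Z_n(t)$, the density of $\{z_i\}$ reduces the problem to checking $\mathcal{T}$-measurability of the distance functions $t\mapsto d(z_i,K_n(t))$: writing $G_m(t):=\bigcap_{l\le m}F_{l,N_{n,l}(t)}$, a decreasing sequence of compact sets with intersection $K_n(t)$, one has $d(z_i,K_n(t))=\lim_m d(z_i,G_m(t))$ by compactness, and each $d(z_i,G_m(t))$ is $\mathcal{T}$-measurable because it is a Borel function of the finitely many measurable integers $N_{n,1}(t),\ldots,N_{n,m}(t)$. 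For the measurability of $t\mapsto\mu_t^n:=\mu_t(\,\cdot\,\cap Z_n(t))/\mu_t(Z_n(t))$ into $\mathcal{P}(Z)$, a monotone class argument of the same type as Lemma~\ref{lem2.1} shows that $t\mapsto\int\psi(t,z)\,\mu_t(dz)$ is $\mathcal{T}$-measurable for every bounded $\mathcal{T}\otimes\mathcal{B}(Z)$-measurable~$\psi$; applying this to $\psi(t,z)=f(z)\mathbf{1}_{Z_n(t)}(z)$ with $f\in C_b(Z)$ yields the measurability of $t\mapsto\int f\,d\mu_t^n$, hence of $t\mapsto\mu_t^n$. The total-variation estimate follows from the Jordan decomposition with pieces supported on $Z_n(t)$ and its complement:
$$
\|\mu_t^n-\mu_t\|=2\bigl(1-\mu_t(Z_n(t))\bigr)\le 2^{1-n}\to 0.
$$

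For the extension to a completely regular Luzin space $Z$, let $\phi\colon P\to Z$ be a continuous bijection from a Polish space~$P$; by the Luzin--Souslin theorem $\phi$ is a Borel isomorphism. The pullbacks $\tilde\mu_t:=\mu_t\circ\phi$ form a $\mathcal{T}$-measurable family of Borel probability measures on~$P$; applying the Polish case yields compact $\tilde K_n(t)\subset P$ with all required properties, and $Z_n(t):=\phi(\tilde K_n(t))\subset Z$ is then compact (as a continuous image of a compact set), with measurability transferred through the Borel isomorphism~$\phi$. A Borel set in a Polish space admits a finer Polish topology with the same Borel $\sigma$-algebra (in particular it is Luzin), so the final assertion follows. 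The delicate point I anticipate is precisely the open-set measurability of $t\mapsto Z_n(t)$: graph-measurability is immediate, but in the absence of any projection theorem or completeness assumption on $\mathcal{T}$ one must read the distance functions off the construction, as sketched above, rather than invoke abstract equivalences between different notions of set-valued measurability.
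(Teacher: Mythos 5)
Your proposal is correct and follows essentially the same route as the paper's proof: a parametrized Ulam construction with measurable indices $N_{n,m}(t)$, graph measurability via countable unions of rectangles, set-valued measurability via distance functions approximated through the finite intersections, and reduction of the Luzin case to the Polish case by transporting along a continuous Borel-isomorphic bijection (the paper retopologizes instead, which amounts to the same thing). The minor deviations --- enforcing monotonicity by taking finite unions rather than observing $N_{n,k}(t)\le N_{n+1,k}(t)$, and deriving the measurability of $t\mapsto\mu_t^n$ from a monotone class argument rather than from the countably-valued restrictions --- are cosmetic.
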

\begin{proof}
It suffices to introduce the parameter $t$ in the standard proof of Ulam's theorem.
We consider $Z$ with a complete separable metric and take a dense countable set $\{z_j\}\subset Z$.
Let $n\in\mathbb{N}$.
For each $k$ and $m$ in $\mathbb{N}$
 let $A_{k,m}$ be the union of $m$ closed balls of radius $2^{-k}$ centered
at $z_1,\ldots,z_m$. Then $\mu_t(A_{k,m})\to 1$ as $m\to\infty$.
Let
$$
N_{n,k}(t)=\min \{m\colon \mu_t(A_{k,m})>1-2^{-n-k}\},
$$
$$
Z_{n}(t)=\bigcap_{k\ge 1} A_{k, N_{n,k}}(t).
$$
The sets $A_{k, N_{n,k}}(t)$ are closed. Hence the sets $Z_{n}(t)$ are also closed.
In addition, each $Z_{n}(t)$ is contained in finitely many balls of radius $2^{-k}$ for each~$k$.
Hence $Z_{n}(t)$ is compact. By construction,
$$
\mu_t(X\backslash Z_{n}(t))< \sum_{k=1}^\infty 2^{-n-k}=2^{-n}.
$$
Let $\mu_t^n$ be the normalized restriction of $\mu_t$ to $Z_{n}(t)$.
Then $\|\mu_t-\mu_t^n\|< 2^{-n}(1-2^{-n})^{-1}$.
We have $Z_n(t)\subset Z_{n+1}(t)$, since $N_{n,k}(t)\le N_{n,k+1}(t)$, so
$A_{k, N_{n,k}}(t)\subset A_{k, N_{n,k+1}}(t)$.

The functions $t\mapsto N_{n,k}(t)$ are $\mathcal{T}$-measurable,
since the set $N_{n,k}^{-1}(q)$ is the intersection
of the sets $\{t\colon \mu_t(A_{k,j})\le 1-2^{-n-k}\}$ with $j<q$
and $\{t\colon \mu_t(A_{k,q})> 1-2^{-n-k}\}$ that are $\mathcal{T}$-measurable, which
readily follows from the measurability of $t\mapsto \mu_t$.
In order to show the measurability of $\mu_t^n$ it suffices to show the measurability
of the mapping $t\mapsto \mu|_{N_{n}(t)}$. This mapping is the limit of restrictions
of $\mu_t$ to the decreasing sets
$\bigcap_{k=1}^m  A_{k, N_{n,k}}(t)$. Such restrictions
$\mu_t^{n,m}$ are $\mathcal{T}$-measurable. Indeed,
the sets $N_{n,k}^{-1}(q)$ are $\mathcal{T}$-measurable, hence so are their finite
intersections, but $\mu_t^{n,m}$ has countably many values assumed  on such intersections.

Every set $\bigcup_t (\{t\}\times Z_n(t))$
 belongs to $\mathcal{T}\otimes \mathcal{B}(Z)$, because it is the intersection of the sets
 $\bigcup_t \Bigl(\{t\}\times \bigcap_{k=1}^m  A_{k, N_{n,k}}(t)\Bigr)$, which are
 in $\mathcal{T}\otimes \mathcal{B}(Z)$,
 since they are countable unions of sets of the form $T_{k,n,m}\times A_{k,m}$ with
 $T_{k,n,m}=\{t\colon N_{k,n}(t)=m\}$.
Let us show that the set-valued mapping $t\mapsto Z_n(t)$ is $\mathcal{T}$-measurable.
It suffices to show that for every $x\in X$ the real function $t\mapsto {\rm dist}(x,Z_n(t))$
is $\mathcal{T}$-measurable, see \cite[Theorem~III.9]{CV} or \cite[Chapter~8]{AubinF}.
Let $D_{n,m}(t)=\bigcap_{k=1}^m A_{k, N_{n,k}}(t)$.
We observe that
$$
d_H(Z_n(t), D_{n,m}(t))\to 0 \quad  \hbox{and}\quad
{\rm dist}(x,D_{n,m}(t))\to {\rm dist}(x,Z_n(t)) \quad \hbox{as $m\to\infty$.}
$$
Indeed, for every fixed $\varepsilon>0$ there is $m$ such that $D_{n,m}(t)$ is contained
in the $\varepsilon$-neighborhood of~$Z_n(t)$, because otherwise there is a sequence
of points $x_m\in D_{n,m}(t)$ with ${\rm dist}(x_m,Z_n(t))\ge \varepsilon$. Each $D_{n,m}(t)$
is a union of finitely many balls of radius $2^{-k}$, hence $\{x_m\}$ is precompact and has a limit
point~$x_0$. This point must belong to all $D_{n,m}(t)$, hence to~$Z_n(t)$, which is impossible,
since ${\rm dist}(x_0,Z_n(t))\ge \varepsilon$. This proves the first relation, the second is its corollary.

The case of Luzin spaces follows from the considered case, because  $Z$ admits a stronger
Polish topology that generates a stronger Polish topology
on $\mathcal{P}(Z)$ with the same Borel sets as in the original topology,
so the measurability of $\mathcal{P}(Z)$-valued mappings remains the same.
Finally, we recall that any Borel set in a Polish space is the image
of a Polish space under a continuous injective mapping (see \cite[Corollary~6.8.5]{B07}).
\end{proof}

\begin{remark}
\rm
Under a stronger condition that $t\mapsto \mu_t(A)$ is $\mathcal{T}$-measurable
for every Souslin set~$A$ (which does not follow automatically) the previous
assertion extends to the case of a Souslin subspace $Z$ in a Polish space~$E$
and gives increasing compact sets $Z_n(t)$ such that the functions $(t,x)\mapsto I_{Z_n(t)}(x)$
are $\mathcal{T}\otimes\sigma(\mathcal{S}(Z))$-measurable and
$\mu_t(Z_n(t))> 1-2^{-n}$. To this end, we first take such compact
sets $Z_n^1(t)$ in~$E$ and then consider a parametric version of the standard proof
of measurability of sets obtained by means of the Souslin operation
(see \cite[Theorem~1.10.5]{B07}). Recall that $Z$ can be written as
$$
Z=\bigcup_{(n_i)} \bigcap_{k=1}^\infty E_{n_1,\ldots,n_k},
$$
where $\{E_{n_1,\ldots,n_k}\}$ is a certain monotone table of
 closed balls of rational radii centered at points
of a fixed countable dense set and the union is taken over all natural sequences~$(n_i)$.
For every
collection $m_1,\ldots,m_k$
of natural numbers, denote by $D_{m_1,\ldots,m_k}$
the union of the sets $E_{n_1,\ldots,n_k}$ over all $n_1\le m_1,\ldots,n_k\le m_k$.
This is a closed set. It is clear from the proof of the cited theorem
(taking into account Remark~\ref{rem3.2})
that
one can find numbers $m_k(t)$ measurably depending on~$t$ such that
$$
\mu_t(D_{m_1(t),\ldots,m_k(t)}\cap Z_n^1(t))> 1-2^{-n}.
$$
 Then
$$
\mu_t\Bigl(\bigcap_{k=1}^\infty D_{m_1(t),\ldots,m_k(t)}\cap Z_n^1(t)\Bigr)\ge 1-2^{-n}.
$$
It is verified in that proof that
$\bigcap_{k=1}^\infty D_{m_1(t),\ldots,m_k(t)}$ is contained in~$Z$. It is clear that
this set is closed, so its intersection with $Z_n^1(t)$ is compact.
\end{remark}

\begin{lemma}\label{lem2}
Suppose that
$(T,\mathcal{T})$ is a measurable space,
$X$ and $Y$ are Polish {\rm(}or Luzin{\rm)} spaces,
$t\mapsto \mu_t$ and $t\mapsto \nu_t$ are $\mathcal{T}$-measurable
mappings with values in $\mathcal{P}(X)$ and $\mathcal{P}(Y)$, correspondingly.
Let $(x,y)\mapsto h(t,x,y)$ be lower
semicontinuous and $K_{h_t}(\mu_t,\nu_t)<\infty$ for each~$t$.
Then for the measures $\mu_t^n$ and $\nu_t^n$ from the previous lemma applied to
$\mu_t$ and $\nu_t$ we have
$$
K_{h_t}(\mu_t,\nu_t)=\lim\limits_{n\to\infty} K_{h_t}(\mu_t^n,\nu_t^n)
\quad \forall\, t\in T.
$$
\end{lemma}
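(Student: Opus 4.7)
The proof splits into the two inequalities $\liminf_n K_{h_t}(\mu_t^n,\nu_t^n)\ge K_{h_t}(\mu_t,\nu_t)$ and $\limsup_n K_{h_t}(\mu_t^n,\nu_t^n)\le K_{h_t}(\mu_t,\nu_t)$. For the \emph{lower bound} I would invoke weak lower semicontinuity of $K_{h_t}$. By Lemma~\ref{lem1}, $\|\mu_t^n-\mu_t\|\to 0$ and $\|\nu_t^n-\nu_t\|\to 0$, hence $\mu_t^n\to\mu_t$ and $\nu_t^n\to\nu_t$ weakly. Choose $h_t$-optimal couplings $\sigma_n\in\Pi(\mu_t^n,\nu_t^n)$ (existence follows from the lower semicontinuity of $\sigma\mapsto\int h_t\,d\sigma$ and the weak compactness of $\Pi(\mu_t^n,\nu_t^n)$; we may assume the liminf is finite). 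Tightness of $\{\mu_t^n\}\cup\{\mu_t\}$ and $\{\nu_t^n\}\cup\{\nu_t\}$ yields tightness of $\{\sigma_n\}\subset\mathcal{P}(X\times Y)$, so Prohorov's theorem produces a subsequence $\sigma_{n_k}$ weakly convergent to some $\sigma^*$, and the continuity of the projections forces $\sigma^*\in\Pi(\mu_t,\nu_t)$. The lower-semicontinuity statement recorded just before the lemma (Corollary~4.3.4 in~\cite{B18}) then yields $K_{h_t}(\mu_t,\nu_t)\le \int h_t\,d\sigma^*\le \liminf_k\int h_t\,d\sigma_{n_k}=\liminf_n K_{h_t}(\mu_t^n,\nu_t^n)$.

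For the \emph{upper bound} I would build explicit couplings $\sigma_n\in\Pi(\mu_t^n,\nu_t^n)$ starting from an $h_t$-optimal plan $\sigma^*\in\Pi(\mu_t,\nu_t)$ (which exists by the same LSC--compactness reasoning combined with $K_{h_t}(\mu_t,\nu_t)<\infty$). Write $a_n=\mu_t(X_n(t))$, $b_n=\nu_t(Y_n(t))$, and $\eta_n=\sigma^*|_{X_n(t)\times Y_n(t)}$, with marginals $\alpha_n,\beta_n$ and total mass $p_n\ge a_n+b_n-1\to 1$. Since $\alpha_n$ is concentrated on $X_n(t)$ and satisfies $\alpha_n\le \mu_t|_{X_n(t)}\le \mu_t^n$ (because $1/a_n\ge 1$), and symmetrically $\beta_n\le \nu_t^n$, the residuals $\mu_t^n-\alpha_n$ and $\nu_t^n-\beta_n$ are nonnegative and share the common mass $1-p_n\to 0$. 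Setting
$$
\sigma_n:=\eta_n+(1-p_n)^{-1}\bigl(\mu_t^n-\alpha_n\bigr)\otimes\bigl(\nu_t^n-\beta_n\bigr)\in\Pi(\mu_t^n,\nu_t^n)
$$
(the product piece being zero when $p_n=1$), the first summand contributes $\int_{X_n(t)\times Y_n(t)}h_t\,d\sigma^*$, which converges to $\int h_t\,d\sigma^*=K_{h_t}(\mu_t,\nu_t)$ by dominated convergence, using $h_t\in L^1(\sigma^*)$.

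The \emph{main obstacle} is showing that the product correction $\omega_n:=(1-p_n)^{-1}(\mu_t^n-\alpha_n)\otimes(\nu_t^n-\beta_n)$ satisfies $\int h_t\,d\omega_n\to 0$. Since $h_t$ is only lower semicontinuous, it may well be unbounded on the compact set $X_n(t)\times Y_n(t)$, so the vanishing total mass $1-p_n$ of $\omega_n$ is not in itself sufficient. My plan is to exploit the explicit structure of the residuals: the density $d(\mu_t^n-\alpha_n)/d\mu_t=I_{X_n(t)}(a_n^{-1}-\sigma^{*,x}(Y_n(t)))$, with a symmetric formula on the $Y$-side, shows that the residuals concentrate precisely where $\sigma^*$ leaks out of the compact rectangle. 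If the blind product correction fails, I would replace it by a coupling derived from $\sigma^*|_{(X_n(t)\times Y_n(t))^c}$ by rerouting the leakage back into $X_n(t)\times Y_n(t)$ through the conditional measures of $\sigma^*$, giving the bound $\int h_t\,d\omega_n\le \int_{(X_n(t)\times Y_n(t))^c}h_t\,d\sigma^*\to 0$. A truncation $h_t=(h_t\wedge k)+(h_t-h_t\wedge k)$ and a diagonal argument (first $n\to\infty$ for fixed $k$, then $k\to\infty$) take care of any remaining subtleties, and together the two bounds yield the claim.
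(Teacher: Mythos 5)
Your lower-bound half is correct and is essentially the paper's own argument: optimal plans $\sigma_n\in\Pi(\mu_t^n,\nu_t^n)$ exist, the family is uniformly tight, any weak limit point lies in $\Pi(\mu_t,\nu_t)$, and lower semicontinuity of $\sigma\mapsto\int h_t\,d\sigma$ (applied to the truncations $\min(h_t,N)$ and then letting $N\to\infty$, since the cited corollary concerns bounded lower semicontinuous integrands) gives $K_{h_t}(\mu_t,\nu_t)\le\liminf_n K_{h_t}(\mu_t^n,\nu_t^n)$.

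The upper-bound half has a genuine gap, and you have correctly located it yourself: nothing controls $\int h_t\,d\omega_n$. The vanishing mass $1-p_n\to 0$ is not enough because a finite lower semicontinuous $h_t$ may be unbounded on the compact rectangle $X_n(t)\times Y_n(t)$, and the residuals can charge precisely the expensive region; note in particular that $\mu_t^n-\alpha_n$ contains the renormalization piece $(a_n^{-1}-1)\mu_t|_{X_n(t)}$, which is spread over all of $X_n(t)$ and not only over the part where $\sigma^*$ leaks out of the rectangle. Your proposed repairs do not close this. The inequality $\int h_t\,d\omega_n\le\int_{(X_n(t)\times Y_n(t))^c}h_t\,d\sigma^*$ cannot be expected: any coupling of the residuals is concentrated inside the rectangle, where the values of $h_t$ bear no relation to its values on the complement (take $h_t$ vanishing off the rectangle and huge at points of it carrying residual mass). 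The truncation-plus-diagonal step would require $\lim_k\limsup_n\int(h_t-h_t\wedge k)\,d\omega_n=0$, i.e., uniform integrability of $h_t$ with respect to $\{\omega_n\}$, which is exactly what is missing. The paper takes a different and shorter route for this half: it deduces the existence of the limit and the inequality $\lim_n K_{h_t}(\mu_t^n,\nu_t^n)\le K_{h_t}(\mu_t,\nu_t)$ from the domination relations $\mu_t^n\le q_n(t)\mu_t$ and $\mu_t^n\le p_n(t)\mu_t^{n+1}$ with $p_n(t),q_n(t)\to 1$, which hold because $\mu_t^n$ is the normalized restriction of $\mu_t$ to an increasing sequence of sets; no explicit coupling of residual masses is attempted. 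Note also that in the paper's applications the lemma is invoked only after the cost has been reduced to a bounded one, and for $h_t\le 1$ the whole statement follows at once from the first lemma of Section~5 together with $\|\mu_t^n-\mu_t\|\to 0$; for unbounded $h_t$ you need an argument exploiting the domination structure rather than a bare mass estimate on the correction term.
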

\begin{proof}
Let $t$ be fixed. We have
$$\mu_t^n\le p_n(t)\mu_t^{n+1}
\quad \hbox{and}\quad
\mu_t^n\le q_n(t)\mu_t,
$$
where $q_n(t)>1$ and $p_n(t)>1$ are numbers converging to~$1$.
Hence there is a finite limit $\lim\limits_{n\to\infty} K_{h_t}(\mu_t^n,\nu_t^n)\le
K_{h_t}(\mu_t,\nu_t)$. We now prove the opposite inequality.
Let $\sigma_t^n\in\Pi(\mu_t^n,\nu_t^n)$
be optimal measures for~$h_t$. Both sequences $\{\mu_t^n\}$ and $\{\nu_t^n\}$ are uniformly tight,
hence  $\{\sigma_t^n\}$ is also uniformly tight and
contains a weakly convergent subsequence, which we denote by the same
indices. Let $\sigma_t$ be its limit. Clearly,
$\sigma_t\in \Pi(\mu_t,\nu_t)$. The integral of $h_t$ against $\sigma_t$
does not exceed the liminf of the integrals of $h_t$ against the measures~$\sigma_t^n$
(see \cite[Corollary~4.3.4]{B18}), which is
exactly the limit of $K_{h_t}(\mu_t^n,\nu_t^n)$.
\end{proof}

Let us recall the following classical result
going back to Novikoff and Kunugui,
see \cite[p.~224, 225]{Del} (or \cite[Theorem~18.18]{Kech}, where $X$ is a standard Borel space).

\vskip .1in
{\bf Theorem A.}
{\it Let $X$ be a Souslin space, $Y$ a Polish space, and $B\subset X\times Y$ a~Borel set
such that for all $x\in X$ the sections
$B_x$  are $\sigma$-compact {\rm(}countable unions of compact sets{\rm)}.
Then $B$ admits a Borel uniformization, which means that the projection $\pi_X(B)$
of $B$ on $X$ is a Borel set and there is a Borel mapping $$f\colon \pi_X(B)\to Y$$ whose graph
is contained in~$B$.}
\vskip .1in

There is also another classical result with somewhat different assumptions
(see, e.g., \cite[Theorem~6.9.3 and Corollary~6.9.4]{B07}).

\vskip .1in
{\bf Theorem B.}
{\it Let $(T,\mathcal{T})$ be a general measurable space, let $E$ be a Polish space, and let
$\Psi$ be a mapping on $T$ with values in the set of nonempty closed subsets of $E$ that is
measurable in the following sense: for every open set $U\subset E$, the projection
of the set $\{(t,x)\colon x\in \Psi(t)\cap U\}$ on $T$ belongs to~$\mathcal{T}$.
Then there is a $(\mathcal{T},\mathcal{B}(E))$-measurable mapping
$\zeta\colon T\to E$ with $\zeta(t)\in \Psi(t)$ for all~$t$, i.e., a
$(\mathcal{T},\mathcal{B}(E))$-measurable selection. Moreover, there is a sequence of
$(\mathcal{T},\mathcal{B}(E))$-measurable mappings
$\zeta_n\colon T\to X$ such that the sequence $\{\zeta_n(t)\}$ is dense in $\Psi(t)$ for each~$t$.}
\vskip .1in

The difference between the two theorems is that in the latter the space $T$ is more general,
but the hypotheses include the measurability of the aforementioned projections, while
in the former this measurability follows from other assumptions
(here we consider $\Psi(x)=B_x$ in order to compare the settings). Indeed, to see this we observe that
it suffices to verify the required measurability for closed sets~$U$ (since any open set in a Polish
space is some countable union of closed sets). But then the sections of $B\cap (X\times U)$ remain
$\sigma$-compact, so the projection   remains Borel.
Note that in Theorem~A there is also a sequence of Borel mappings
$f_n\colon \pi_X(B)\to Y$ such that $\{f_n(x)\}$ is dense in $B_x$ for each $x\in \pi_X(B)$.

Thus, Theorem~B is formally more general (but to see this we need Theorem~A),
however, practically the most difficult part
is to verify the measurability of projections (and the proof of Theorem~A is more difficult).
So our main tool will be Theorem~A. It should be noted that Theorem~A is not valid for arbitrary
measurable spaces in place of Souslin spaces (it  fails even for co-analytic sets in $[0,1]$ and
single-valued sections).

Finally, let us mention yet another known result (see \cite[Theorem~6.9.5]{B07})
in which the assumptions are weaker,
but also the guaranteed measurability of selections is weaker.

\vskip .1in
{\bf Theorem C.}
{\it Suppose that $T$ and $E$ are Souslin spaces.
Let $\Psi$ be a   multivalued mapping  from
$T$ to the set of nonempty subsets of $E$ such that its graph
$$
\Gamma_\Psi=\{(t,u)\colon t\in T\, u\in \Psi(t)\}
$$
is a Souslin set in~$T\times E$.
Then, there exists a sequence of selections~$\zeta_n$ that are
measurable as mappings from  $(T,\sigma(\mathcal{S}(T)))$ to
$(E,\mathcal{B}(E))$ and, for every~$t\in T$, the sequence
$\{\zeta_n(t)\}$ is dense in the set~$\Psi(t)$.}

\vskip .1in

In our situation, a typical application of these results is this.

The set-valued mapping
$$
(\mu,\nu)\mapsto \Pi(\mu,\nu)
$$
from $\mathcal{P}(X)\times \mathcal{P}(Y)$ to the set of nonempty compact
subsets of $\mathcal{P}(X\times Y)$ is measurable in the aforementioned sense.
Alternatively, we can apply Theorem~A by using the easy fact that the set $B$ of triples
$(\mu,\nu,\sigma$) in  $\mathcal{P}(X)\times\mathcal{P}(Y)\times \mathcal{P}(X\times Y)$
such that $\sigma\circ\pi_X^{-1}=\mu$ and $\sigma\circ\pi_Y^{-1}=\nu$
is Borel and its sections
$B_{\mu,\nu}$ are compact. Hence there is a sequence
of Borel mappings
$$
\Phi_n\colon \mathcal{P}(X)\times \mathcal{P}(Y)\to \mathcal{P}(X\times Y)
$$
such that the sequence $\{\Phi_n(\mu,\nu)\}$ is dense in $\Pi(\mu,\nu)$
for all $\mu$ and $\nu$.

Let $t\mapsto \mu_t$ and  $t\mapsto \nu_t$ be measurable mappings from $(T,\mathcal{T})$
to the spaces $\mathcal{P}(X)$ and $\mathcal{P}(Y)$
of Borel probability measures on Polish spaces~$X$ and~$Y$.
Then  there is a sequence of measurable mappings $\Psi_n\colon T\to \mathcal{P}(X\times Y)$
such that $\Psi_n(t)\in \Pi(\mu_t,\nu_t)$ and the sequence $\{\Psi_n(t)\}$
is dense in $\Pi(\mu_t,\nu_t)$ for each~$t$. To this end, we set
$\Psi_n(t):=\Phi_n(\mu_t,\nu_t)$.

Suppose now that
$(x,y)\mapsto h(t,x,y)$ is continuous for each fixed~$t\in T$ (the case of Theorem~\ref{tmain1}).
Then the function
$$
K(t)=K_{h_t}(\mu_t,\nu_t)
$$
is measurable on~$T$, which proves the first assertion of Theorem~\ref{tmain1}. Indeed,
$$
K(t)=\inf_n \int_{X\times Y} h(t,x,y)\, \Psi_n(t)(dx dy).
$$

Let now
$$
M_t:=\biggl\{\sigma\in \Pi(\mu_t,\nu_t)\colon
\int h(t,x,y)\, \sigma(dx dy)=K(t)\biggr\}.
$$
Each set $M_t$ is compact in $\Pi(\mu_t,\nu_t)\subset \mathcal{P}(X\times Y)$.
Once we have the measurability of the set-valued mapping $t\mapsto M_t$ we can use selection theorems.
However, the problem is to verify this measurability. This will be done below
for Souslin spaces~$T$ in order to have the Borel measurability.
However, if we are satisfied with the measurability with respect to
the  $\sigma$-algebra $\sigma(\mathcal{S}(\mathcal{T}))$ on~$T$, then
we can apply Theorem~B to this larger $\sigma$-algebra. The hypothesis of Theorem~B is fulfilled.
Indeed, let $U$ be an open set in~$\mathcal{P}(X\times Y)$. The set of pairs
$(t,\sigma)$ in $T\times \mathcal{P}(X\times Y)$, where
$\sigma\in \Pi(\mu_t,\nu_t)$ and the integral of $h_t$ against $\sigma$ is $\mathcal{T}$-measurable,
is contained in $\mathcal{T}\otimes \mathcal{B}(\mathcal{P}(X\times Y))$ by the $\mathcal{T}$-measurability
of~$K$. Hence the intersection of this set with $T\times U$
is also in~$\mathcal{T}\otimes \mathcal{B}(\mathcal{P}(X\times Y))$.
Therefore, the projection of this intersection belongs to $\mathcal{S}(\mathcal{T})$
by a known result (see \cite[Corollary~6.10.10]{B07}).

Finally, the proof of Theorem~\ref{tmain3} is completely analogous,
the only difference is that now we apply Theorem~C: the set of pairs $(t,\sigma)$
in $T\times \mathcal{P}(X\times Y)$ such that $\sigma\in \Pi_t(\mu_t,\nu_t)$ is
Borel as  above. Hence there is a sequence of $\mathcal{S}(T)$-measurable
mappings $\Psi_n\colon T\to \mathcal{P}(X\times Y)$ such that the sequence $\{\Psi_n(t)\}$
is dense in $\Pi_t(\mu_t,\nu_t)$, so $K(t)$ equals the infimum of the sequence of
integrals of $h_t$ against~$\Psi_n(t)$. Once we know that $K(t)$ is Borel measurable,
the same reasoning applies to the set of pairs  $(t,\sigma)$
with the additional restriction that the integral of $h_t$ against $\sigma$ equals~$K(t)$,
but this restriction determines a Borel set.

\begin{lemma}\label{lem3}
Suppose that $Z$ is a Borel set in a complete separable metric space
with a metric~$d$, $T$ is a Souslin space, and
$h\colon T\times Z\to [0,2]$ is a Borel function that is lower semicontinuous
in the second variable and has the following property{\rm:}
 for every $t$ there is a compact set $Z_t\subset Z$
such that $h(t,z)\in [0,1)$ for all $z\in Z_t$ and
$h(t,z)=2$ for all $z\in Z\backslash Z_t$.
Then there is a sequence of Borel mappings
$\psi_j\colon T\to Z$ such that
\begin{equation}\label{appr1}
\inf\{h(t,z)+d(x,z)\colon z\in Z\}=\inf_j [h(t,\psi_j(t))+d(x,\psi_j(t))]
\quad \forall\, x\in Z, t\in T.
\end{equation}
\end{lemma}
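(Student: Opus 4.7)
The plan is to trade the lower semicontinuous cost $h(t,\cdot)$ for a genuinely continuous cost by passing to its epigraph. Let $E$ denote the ambient complete separable metric space in which $Z$ sits as a Borel set. Consider
\[
B := \bigl\{(t,z,r)\in T\times E\times[0,2]\colon\ z\in Z,\ h(t,z)\le 1,\ h(t,z)\le r\bigr\},
\]
which is Borel in $T\times E\times[0,2]$ because $Z$ is Borel in $E$ and $h$ is Borel on $T\times Z$. For each $t$ the section
\[
B_t=\bigl\{(z,r)\colon z\in Z_t,\ h(t,z)\le r\le 2\bigr\}
\]
is contained in the compact set $Z_t\times[0,2]$ and is closed there (lower semicontinuity of $h(t,\cdot)$ makes $\{(z,r)\colon r\ge h(t,z)\}$ closed), hence $B_t$ is compact. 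On $B_t$ the surrogate cost $(z,r)\mapsto r+d(x,z)$ is continuous and
\[
\inf_{(z,r)\in B_t}\bigl[r+d(x,z)\bigr]=\inf_{z\in Z_t}\bigl[h(t,z)+d(x,z)\bigr].
\]

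Applying Theorem~A (in the stronger form recalled immediately after its statement) to $B\subset T\times (E\times[0,2])$ yields that $\pi_T(B)=\{t\colon Z_t\ne\emptyset\}$ is Borel and that there is a sequence of Borel selections $(z_j,r_j)\colon \pi_T(B)\to E\times[0,2]$ such that $\{(z_j(t),r_j(t))\}_j$ is dense in $B_t$ for every $t\in\pi_T(B)$; in particular $z_j(t)\in Z_t\subset Z$. I extend each $z_j$ to a Borel map $T\to Z$ by setting $z_j(t):=x_0$ off $\pi_T(B)$ for some fixed $x_0\in Z$. To cover the contribution of $Z\setminus Z_t$, where $h\equiv 2$, I adjoin a countable family of constant selections $\eta_k(t)\equiv x_k$, where $\{x_k\}$ is a countable dense subset of the separable space $Z$. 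The desired family $\{\psi_j\}$ is any enumeration of $\{z_j\}\cup\{\eta_k\}$, all of whose members are Borel mappings $T\to Z$.

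To check (\ref{appr1}) the reverse inequality is obvious; for the direct one, fix $t\in T$, $x\in Z$, $\varepsilon>0$, write $f(z):=h(t,z)+d(x,z)$, and pick $z^*\in Z$ with $f(z^*)<\inf_{z\in Z}f+\varepsilon/2$. If $z^*\in Z_t$, then $(z^*,h(t,z^*))\in B_t$, and density produces $j$ with $d(z^*,z_j(t))<\varepsilon/4$ and $|r_j(t)-h(t,z^*)|<\varepsilon/4$; since $r_j(t)\ge h(t,z_j(t))$, this gives
\[
f(z_j(t))\le r_j(t)+d(x,z_j(t))<f(z^*)+\varepsilon/2<\inf_{z\in Z} f+\varepsilon.
\]
If instead $z^*\in Z\setminus Z_t$, then $h(t,z^*)=2$ and $f(z^*)=2+d(x,z^*)$, so choosing $k$ with $d(z^*,x_k)<\varepsilon/4$ and using $h(t,x_k)\le 2$ yields $f(\eta_k(t))\le 2+d(x,z^*)+\varepsilon/4<\inf_{z\in Z}f+\varepsilon$. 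The main obstacle is precisely the lower (and not full) semicontinuity of $h(t,\cdot)$: a sequence of Borel selections merely dense in $Z_t$ cannot recover $\inf_{z\in Z_t} f$, because values $h(t,\psi_j(t))$ need not approach $h(t,z^*)$ from above along a convergent subsequence. Lifting to the epigraph trades $h$ for a continuous function of $(z,r)$ on a compact set, which is why the surrogate coordinate $r$ is indispensable; the residual case $Z_t=\emptyset$ and the possibility that the infimum over $Z$ is realized in $Z\setminus Z_t$ are handled by the auxiliary constant selections $\eta_k$.
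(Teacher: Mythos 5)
Your proof is correct, and it reaches the conclusion by a genuinely different mechanism from the paper's, even though both ultimately rest on Theorem~A applied to a Borel set with compact (hence $\sigma$-compact) sections plus a countable family of constant selections for the region where $h\equiv 2$. The paper controls the value of $h$ at the selected points by \emph{stratifying the graph}: it covers $\bigl\{(t,z)\colon z\in Z_t\bigr\}$ by countably many Borel pieces $S_{k,m}=\{(t,z)\in T\times B_m\colon h(t,z)\in U_k\}$, where $U_k$ runs over small rational intervals of values and $B_m$ over small balls in $Z$, and extracts a single Borel selection $\psi_{k,m}$ from each piece; membership in $S_{k,m}^t$ then forces $h(t,\psi_{k,m}(t))$ to lie in the same short interval as $h(t,z^*)$, which is exactly what a selection merely dense in $Z_t$ would fail to give. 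You instead \emph{lift to the epigraph} $B_t=\{(z,r)\in Z_t\times[0,2]\colon r\ge h(t,z)\}$, which lower semicontinuity makes compact, and invoke the dense-sequence form of Theorem~A once; the surrogate coordinate $r_j(t)$ dominates $h(t,z_j(t))$ from above while tracking $h(t,z^*)$, which plays the same role as the interval $U_k$. Your route is conceptually cleaner (one application of the uniformization theorem, no double indexing, and a direct verification of \eqref{appr1} for all $x$ without the paper's reduction to a countable dense set via Lipschitz continuity in~$x$), at the price of needing the dense-sequence refinement of Theorem~A, whereas the paper only uses the single-selection form on each stratum. Both arguments correctly identify and resolve the central obstacle you name at the end, and your treatment of the degenerate cases ($Z_t=\emptyset$, near-minimizers lying in $Z\setminus Z_t$) is sound.
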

\begin{proof}
We  consider the sets
$$
S_{k,m}=\{(t,z)\in T\times B_m\colon h(t,z)\in U_k\},
$$
where $\{U_k\}$ is the sequence of all rational semiclosed intervals $(a,b]$ in $[-1,1]$
and $\{B_m\}$ is the sequence of all closed balls with positive rational radii centered at the
points of a fixed countable dense set $\{z_l\}$ in~$Z$.
The sets $S_{k,m}$ are Borel. We take into account only nonempty sets~$S_{k,m}$.
Note that if $(t,z)\in S_{k,m}$, then $z$ must belong to~$Z_t$, since $U_k\subset [-1,1]$ and
$h(t,\cdot)=2$ outside~$Z_t$.
For each~$t\in T$, the section
$$
S_{k,m}^t=\{x\colon (t,x)\in S_{k,m}\}
$$
 is the difference of two compact sets by the lower
semicontinuity of $h$ in the second argument and the inclusion $S_{k,m}^t\subset Z_t$.
Hence this section is $\sigma$-compact. Therefore,
by Theorem~A stated above,
the projection of $S_{k,m}$ onto~$T$, denoted by~$T_{k,m}$, is a Borel set and there
is a Borel mapping $\psi_{k,m}\colon T_{k,m}\to Z$
such that $\psi_{k,m}(t)\in S_{k,m}^t$ for each~$t\in T_{k,m}$.
Outside $T_{k,m}$ we set $\psi_{k,m}(t)=z_1$.
Let us add to this sequence the countable family of constant mappings with values in $\{z_l\}$.
Finally, we renumber the obtained collection
by using a single index~$j$.

We now verify (\ref{appr1}). Since both sides of (\ref{appr1})
are Lipschitz in~$x$, it suffices to show that they
coincide for all $x\in \{z_l\}$.
Fix $t\in T$, $x=z_l$ and $\varepsilon>0$.
Take $z\in Z$ for which $h(t,z)+d(x,z)-\varepsilon/2$
is less than the left-hand side of  (\ref{appr1}).
If $x\not\in Z_t$, then either the left-hand side equals~$2$ and the minimum is attained at
$z=z_l$, so the corresponding constant function works,
or $z\in Z_t$, because $h(t,z)=2$ outside~$Z_t$. If $x\in Z_t$, then we also have $z\in Z_t$.
We show that there are numbers $k$ and $m$ such that
the left-hand side of  (\ref{appr1}) is larger than
$$
h(t,\psi_{k,m}(t))+d(x,\psi_{k,m}(t))-\varepsilon.
$$
To this end,
we find $k$ and $m$ for which $h(t,z)\in U_k$ and $z\in B_m$, moreover, we pick $k$ and $m$ such that
the length of $U_k$ and the diameter of $B_m$ are less than $\varepsilon/8$.
Then for the corresponding $\psi_{k,m}(t)$ we have
$\psi_{k,m}(t)\in B_m$, $h(t,\psi_{k,m}(t))\in U_k$, so that
$$
h(t,\psi_{k,m}(t))+d(x, \psi_{k,m}(t))< h(t,z)+d(x,z)+\varepsilon/4<
\inf\{h(t,z)+d(x,z)\colon z\in Z\}+\varepsilon,
$$
which completes the proof.
\end{proof}

\begin{lemma}\label{lem-appr}
Under the hypotheses of the previous lemma, there is a sequence of Borel
functions $h_n\colon T\times Z\to [0,2]$ such that
$h_n\le h_{n+1}$, $h(t,z)=\lim\limits_{n\to \infty} h_n(t,z)$, and
the functions $z\mapsto h_n(t,z)$ are bounded Lipschitz for each $t$.
\end{lemma}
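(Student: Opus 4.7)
My plan is to take as the approximating sequence the Moreau--Yosida inf-convolution
\[
h_n(t,z) := \inf_{w \in Z}\bigl\{h(t,w) + n\, d(z,w)\bigr\},
\]
and to verify the four required properties in turn. Taking $w=z$ gives $h_n \le h \le 2$, and $h_n \ge 0$ is immediate, so $h_n$ maps into $[0,2]$; the monotonicity $h_n \le h_{n+1}$ follows from $n\,d \le (n+1)\,d$, and a routine triangle-inequality argument shows that $z \mapsto h_n(t,z)$ is $n$-Lipschitz with respect to~$d$, hence bounded Lipschitz.

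For the pointwise convergence $h_n(t,z) \to h(t,z)$ I would split into cases. If $z \in Z_t$, any near-minimizing sequence $w_n$ satisfies $n\, d(z,w_n) \le h(t,z) + o(1) \le 2 + o(1)$, so $w_n \to z$; the lower semicontinuity of $h(t,\cdot)$ then gives $\liminf_n h(t,w_n) \ge h(t,z)$, which combined with $h_n \le h$ yields $h_n(t,z) \to h(t,z)$. If $z \notin Z_t$, then $h(t,z) = 2$, and the compactness of $Z_t$ gives $d(z, Z_t) > 0$; the summands with $w \in Z_t$ blow up like $n\, d(z, Z_t)$, while those with $w \notin Z_t$ are bounded below by $h(t,w) = 2$, so $h_n(t,z) \to 2 = h(t,z)$.

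The main obstacle is the joint Borel measurability of $h_n$ on $T\times Z$, since an infimum over $w \in Z$ of a Borel function is a projection-type operation that generally destroys Borel measurability. My plan here is to invoke Lemma~\ref{lem3} with the rescaled complete separable metric $d' := n\, d$, which generates the same topology on $Z$, so the Borel structure, the compactness of each $Z_t$, and the value profile of $h$ all transfer unchanged. The lemma then supplies Borel mappings $\psi_j^{(n)} \colon T \to Z$ with
\[
h_n(t,z) \;=\; \inf_j \bigl[h(t, \psi_j^{(n)}(t)) + n\, d(z, \psi_j^{(n)}(t))\bigr].
\]
Each summand is Borel in $(t,z)$: the map $t \mapsto h(t, \psi_j^{(n)}(t))$ is the composition of the Borel mapping $t \mapsto (t, \psi_j^{(n)}(t))$ with the Borel function $h$, and $(t,z) \mapsto d(z, \psi_j^{(n)}(t))$ is Borel in $t$ and continuous in $z$. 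A countable infimum of Borel functions is Borel, so $h_n$ is Borel on $T \times Z$. The only conceptually delicate point is recognizing that Lemma~\ref{lem3}, applied with a scaled metric, is exactly the tool that converts the uncountable infimum defining the Moreau--Yosida envelope into a countable one.
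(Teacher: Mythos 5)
Your proposal is correct and coincides with the paper's own proof: the paper defines $h_n(t,z)=\inf\{h(t,y)+n\,d(z,y)\colon y\in Z\}$ and obtains joint Borel measurability by applying Lemma~\ref{lem3} to the metric $nd$, exactly as you do. Your verification of the pointwise convergence (splitting on $z\in Z_t$ versus $z\notin Z_t$) is more detailed than the paper's, but the argument is the same.
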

\begin{proof}
There is a classical construction for approximations:
$$
h_n(t,z)=\inf\{ h(t,y)+n d(z,y), \ y\in Z\}.
$$
The function $h_n$ is Lipschitz in $z$ and $h_n\le h$. Its Borel measurability in $t$ follows by the
previous lemma applied to the metric~$nd$, so $h_n$ is jointly Borel measurable.
\end{proof}

\begin{remark}\label{rem-appr}
\rm
The assumption that $T$ is a Souslin space has been used
in the previous two lemmas to cover the case of lower semicontinuous
functions. If the functions $h_t$ are continuous for each~$t$ and $h$ is
measurable on~$T\times Z$ (not necessarily bounded), then both lemmas
are valid for arbitrary measurable spaces~$(T,\mathcal{T})$, since the approximations
$$
h_n(t,z)=\inf_k [h(t,y_k)+n d(z,y_k)],
$$
where $\{y_k\}$ is a fixed sequence dense in~$Z$, coincide with the functions
defined above by the infimum over the whole space and are Lipschitz. Replacing them
by $\min(h_n,n)$ we obtain bounded Lipschitz functions increasing to~$h$ and measurable
on~$T\times Z$.
\end{remark}

\begin{lemma}\label{lem-lim}
Suppose that lower semicontinuous cost functions $h_n\ge 0$
increase pointwise to a function $h$
for which $K_h(\mu,\nu)<\infty$. Let $\pi_n\in \Pi(\mu,\nu)$ be optimal measures  for $h_n$
converging weakly to a Radon measure $\pi$. Then
$\pi$ is an optimal measure for the triple $h,\mu,\nu$.
In addition, $K_h(\mu,\nu)=\lim\limits_{n\to\infty}K_{h_n}(\mu,\nu)=\lim\limits_{n\to\infty} I_{h_n}(\pi_n)$.
\end{lemma}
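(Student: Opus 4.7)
The plan is to compare $\pi$ to an arbitrary competitor in $\Pi(\mu,\nu)$ through the monotone approximation $h_n \uparrow h$, using semicontinuity on one side and monotone convergence on the other.

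First I would observe that $\pi\in\Pi(\mu,\nu)$: indeed, the projections $\pi_X,\pi_Y$ are continuous, so weak convergence $\pi_n\to\pi$ gives $\pi\circ\pi_X^{-1}=\lim \pi_n\circ\pi_X^{-1}=\mu$ and similarly $\pi\circ\pi_Y^{-1}=\nu$ (in particular, $\pi$ is a probability). Consequently $K_h(\mu,\nu)\le I_h(\pi)$. Also, since $h_n\le h$, we have $K_{h_n}(\mu,\nu)\le K_h(\mu,\nu)<\infty$, and the numerical sequence $K_{h_n}(\mu,\nu)=I_{h_n}(\pi_n)$ is nondecreasing in $n$ (any plan $\sigma\in\Pi(\mu,\nu)$ satisfies $I_{h_n}(\sigma)\le I_{h_{n+1}}(\sigma)$, so taking infima preserves the inequality), hence it has a finite limit $L\le K_h(\mu,\nu)$.

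Next, I would push the reverse inequality through the following two-parameter limit. Fix $m$; then for every $n\ge m$,
\[
\int h_m\, d\pi_n\le \int h_n\, d\pi_n=K_{h_n}(\mu,\nu)\le L.
\]
Since $h_m\ge 0$ is bounded from below and lower semicontinuous, the lower semicontinuity property of the integral under weak convergence (Corollary~4.3.4 of~\cite{B18}, quoted before Theorem~\ref{tmain1}) yields
\[
\int h_m\, d\pi\le \liminf_{n\to\infty}\int h_m\, d\pi_n\le L.
\]
(If $h_m$ is unbounded I would truncate by $\min(h_m,N)$, apply the lower semicontinuity to the bounded l.s.c.\ function, and let $N\to\infty$ by monotone convergence — this is routine and the only mild technicality.) Now let $m\to\infty$: by monotone convergence applied to $h_m\uparrow h$ on the left,
\[
\int h\, d\pi=\lim_{m\to\infty}\int h_m\, d\pi\le L.
\]

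Combining, we obtain
\[
K_h(\mu,\nu)\le I_h(\pi)\le L=\lim_{n\to\infty}K_{h_n}(\mu,\nu)\le K_h(\mu,\nu),
\]
so all inequalities are equalities. This simultaneously shows $I_h(\pi)=K_h(\mu,\nu)$ (hence $\pi$ is optimal for $h,\mu,\nu$) and $K_h(\mu,\nu)=\lim_n K_{h_n}(\mu,\nu)$; the identity $\lim_n I_{h_n}(\pi_n)=K_h(\mu,\nu)$ is immediate because $I_{h_n}(\pi_n)=K_{h_n}(\mu,\nu)$ by optimality of $\pi_n$.

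The only genuinely delicate point is the lower semicontinuity inequality when $h_m$ is unbounded: on a Polish or Souslin target space one handles it either by the truncation argument above or by writing $h_m$ as an increasing limit of bounded continuous functions (via~(\ref{gen-dual}) or directly). Everything else is bookkeeping around the chain of inequalities.
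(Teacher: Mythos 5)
Your proof is correct, and it takes a genuinely different and more economical route than the paper's. The paper first reduces to continuous cost functions (using Dini's theorem on a compact set supplied by uniform tightness of $\{\pi_n\}$), then invokes the Kantorovich duality (\ref{gen-dual}) to approximate $K_h(\mu,\nu)$ from below by $K_w(\mu,\nu)$ with $w$ bounded continuous and $w\le h$, sandwiches $K_{h_n}$ between $K_{\min(w,h_n)}$ and $K_h$ to obtain $K_{h_n}(\mu,\nu)\to K_h(\mu,\nu)$, and finally rules out $I_h(\pi)>K_h(\mu,\nu)$ by a separate contradiction argument that again uses Dini's theorem and tightness. You bypass all of this: the single chain $K_h(\mu,\nu)\le I_h(\pi)\le L\le K_h(\mu,\nu)$, where $L=\lim_n K_{h_n}(\mu,\nu)$ and the middle inequality comes from applying the lower semicontinuity of $\sigma\mapsto\int h_m\,d\sigma$ along the tail $n\ge m$ (where $\int h_m\,d\pi_n\le I_{h_n}(\pi_n)=K_{h_n}(\mu,\nu)\le L$) and then letting $m\to\infty$ by monotone convergence, delivers both conclusions at once. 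Your truncation $\min(h_m,N)$, which is still lower semicontinuous, is the right way to reconcile the unboundedness of $h_m$ with the bounded version of the Portmanteau inequality quoted from \cite{B18}, and the observation that $K_{h_n}(\mu,\nu)$ is nondecreasing with limit at most $K_h(\mu,\nu)$ is immediate. What the paper's longer route buys is the intermediate fact that $K_h(\mu,\nu)$ equals the supremum of $K_w(\mu,\nu)$ over bounded continuous $0\le w\le h$, which has some independent interest but is not needed elsewhere; your argument makes no use of duality at all, so it works verbatim in any setting where the lower semicontinuity of the integral of a nonnegative lower semicontinuous function under weak convergence of Radon measures is available.
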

\begin{proof}
 For continuous cost functions this assertion is simple.
 For the reader's convenience, we include the proof. Clearly, $\pi\in \Pi(\mu,\nu)$.
The sequence $\{\pi_n\}$ is uniformly tight,
so, given $\varepsilon>0$, there is a compact set $K$ with $\pi(K)>1-\varepsilon$,
$\pi_n(K)>1-\varepsilon$ for all $n$.
Enlarging $K$ we can assume that the integral of $h$ over the complement of $K$ with respect
to $\pi$ is less than $\varepsilon$.
On $K$ convergence is uniform by Dini's theorem. Then
$$
|I_h(\pi)-K_{h_n}(\mu,\nu)|\le 2\varepsilon
$$
for large $n$. Hence the numbers $K_{h_n}(\mu,\nu)$
increase  to $I_h(\pi)$.
Since
$$
K_{h_n}(\mu,\nu)\le K_{h}(\mu,\nu)\le I_h(\pi),
$$
 we have
$I_h(\pi)=K_{h}(\mu,\nu)$. This reasoning also
applies to the case where only the function $h$ is continuous, but all $h_n$ are lower semicontinuous
(to apply Dini's theorem, we need the upper semicontinuity of the functions $h-h_n$).

Our next step is to observe that  for lower semicontinuous $h$
the quantity $K_h(\mu,\nu)$ coincides with the supremum of
$K_w(\mu,\nu)$ over bounded continuous cost functions $w\ge 0$ such that
$w(x,y)\le h(x,y)$ for all $x$ and $y$. This follows by
the Kantorovich duality: for each $\varepsilon>0$ there are functions
$\varphi\in C_b(X)$ and $\psi\in C_b(Y)$ such that
$$
\varphi(x)+\psi(y)\le h(x,y)
$$
for all $x$ and $y$ and
$$
\int\varphi\, d\mu+\int\psi \, d\nu \ge
K_h(\mu,\nu)-\varepsilon.
$$
We now take $w(x,y)=\max(\varphi(x)+\psi(y),0)$. Since
$w(x,y)\ge \varphi(x)+\psi(y)$,
the integral of $h$ against any measure in $\Pi(\mu,\nu)$ is at least $K_h(\mu,\nu)-\varepsilon$.
Hence we have $K_w(\mu,\nu)\ge K_h(\mu,\nu)-\varepsilon$.

It follows that there is a pointwise increasing sequence of nonnegative functions $w_n\in C_b(X\times Y)$
 such that $w_n(x,y)\le h(x,y)$ and $K_{w_n}(\mu,\nu)\to K_h(\mu,\nu)$.
 Such functions can be found converging to $h$, since
 there is a sequence of bounded continuous functions $u_n\ge 0$ increasing to $h$,
 so we can take $\max(w_n,u_n)$ and observe that
 $K_{w_n}(\mu,\nu)\le K_{\max(w_n,u_n)}(\mu,\nu)  \le K_h(\mu,\nu)$.

Let us show that there is no gap between $K_h(\mu,\nu)$ and the limit of $K_{h_n}(\mu,\nu)$
in the general case.
Let $\varepsilon>0$. Take a function $w\in C_b(X\times Y)$ with $0\le w\le h$ and
$K_w(\mu,\nu)\ge K_h(\mu,\nu)-\varepsilon$.

The sequence of bounded lower semicontinuous
functions $v_n=\min(w, h_{n})$ increases pointwise to the bounded continuous function $w$.
Hence by the previous step
$$
K_{v_n}(\mu,\nu)\to K_{w}(\mu,\nu)\ge K_h(\mu,\nu)-\varepsilon.
$$
Since $K_{h_{n}}(\mu,\nu)\ge K_{v_n}(\mu,\nu)$, we conclude that
$K_{h_n}(\mu,\nu)\to K_h(\mu,\nu)$.

It remains to show that $K_h(\mu,\nu)$ coincides with $I_h(\pi)$. Otherwise
 for some $\delta>0$ we have
$I_h(\pi)>K_h(\mu,\nu)+\delta$. Using the functions $w_n$ constructed above,
we obtain a number $N$ such that
$$
\int w_N\, d\pi> K_h(\mu,\nu)+\delta/2.
$$
Hence
$$
\int w_N\, d\pi_n> K_h(\mu,\nu)+\delta/2
$$
for all $n$ large enough. Since $w_N$ is bounded and $\{\pi_n\}$ is uniformly tight,
there is a compact set $K$ such that
$$
\int_K w_N\, d\pi_n> K_h(\mu,\nu)+\delta/4
$$
for all $n$ large enough.
The functions $\min(h_n,w_N)$ are lower semicontinuous and increase to
the continuous function~$w_N$. Hence convergence
is uniform on~$K$. Therefore,
$$
\int_K \min(h_n,w_N)\, d\pi_n> K_h(\mu,\nu)+\delta/8
$$
for all $n$ large enough.  This yields the bound
$$
K_{h_n}(\mu,\nu)=\int_{X\times Y} h_n\, d\pi_n
\ge
\int_K h_n\, d\pi_n
\ge
\int_K \min(h_n,w_N)\, d\pi_n
> K_h(\mu,\nu)+\delta/8,
$$
which is a contradiction.
\end{proof}

\begin{lemma}\label{lastlem}
Suppose that in the situation of Theorem~{\rm\ref{tmain2}}
the measurability of $t\mapsto K_{h_t}(\mu_t,\nu_t)$ is given in advance.
Then the assertion about the existence of a Borel version of $\sigma_t$ is true.
\end{lemma}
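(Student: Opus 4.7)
The plan is to realize the graph of the set-valued map $t\mapsto M_t$ of optimal plans as a Borel subset $G\subset T\times\mathcal{P}(X\times Y)$ with nonempty compact sections, and then extract a Borel selection via Theorem~A.

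The first step is to establish joint Borel measurability of the cost functional
$$
\Phi(t,\sigma):=\int h(t,x,y)\,\sigma(dx\,dy)
$$
on $T\times\mathcal{P}(X\times Y)$. For each $n\in\mathbb{N}$ the truncation $h\wedge n$ is bounded and jointly Borel on $T\times X\times Y$. Applying Lemma~\ref{lem2.1} with the base space $X\times Y$ (in place of $X$ there), parameter space $Z:=T\times\mathcal{P}(X\times Y)$ (a completely regular Souslin space), the continuous (hence Borel) measure-valued map $(t,\sigma)\mapsto\sigma$, and the integrand $h\wedge n$, we obtain joint Borel measurability of $\Phi_n(t,\sigma):=\int(h\wedge n)\,d\sigma$ on $T\times\mathcal{P}(X\times Y)$. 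Since $h\wedge n\nearrow h$, monotone convergence gives $\Phi=\sup_n\Phi_n$, so $\Phi$ is Borel.

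The constraint set $B:=\{(t,\sigma)\colon \sigma\in\Pi(\mu_t,\nu_t)\}$ is Borel: the marginal maps $\sigma\mapsto\sigma\circ\pi_X^{-1}$ and $\sigma\mapsto\sigma\circ\pi_Y^{-1}$ are continuous, and $t\mapsto\mu_t$, $t\mapsto\nu_t$ are Borel, so the equations $\sigma\circ\pi_X^{-1}=\mu_t$ and $\sigma\circ\pi_Y^{-1}=\nu_t$ carve out a Borel subset; the sections $B_t=\Pi(\mu_t,\nu_t)$ are compact by Prohorov's theorem. Combined with the Borel measurability of $K$ assumed in the hypothesis, the set
$$
G:=B\cap\{(t,\sigma)\colon \Phi(t,\sigma)\le K(t)\}
$$
is Borel in $T\times\mathcal{P}(X\times Y)$. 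Its sections $G_t=M_t$ are nonempty (optimal plans exist by compactness of $B_t$ and weak lower semicontinuity of $I_{h_t}$) and closed in $B_t$, hence compact, and in particular $\sigma$-compact.

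Theorem~A applied to $G$ then delivers a Borel measurable mapping $t\mapsto\sigma_t$ with $\sigma_t\in M_t$ for every $t$, yielding $\sigma_t\in\Pi(\mu_t,\nu_t)$ and $I_{h_t}(\sigma_t)=K(t)$, as required. The main technical step is the joint Borel measurability of $\Phi$; once this is reduced to Lemma~\ref{lem2.1} via truncation, the construction of $G$ and the invocation of the selection theorem proceed along exactly the lines of the preceding discussion, the only new feature being the additional Borel restriction $\Phi\le K$ used to isolate the optimizers.
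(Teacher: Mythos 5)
Your proof is correct and takes essentially the same route as the paper: both realize the collection of optimizers as the Borel set $\{(t,\sigma)\colon \sigma\in\Pi(\mu_t,\nu_t),\ \int h_t\,d\sigma=K(t)\}$ in $T\times\mathcal{P}(X\times Y)$, note that its sections $M_t$ are nonempty and compact, and apply Theorem~A. The only cosmetic difference is that the paper obtains the joint Borel measurability of $(t,\sigma)\mapsto\int h_t\,d\sigma$ by citing a reference, whereas you derive it from Lemma~\ref{lem2.1} via the truncations $h\wedge n$.
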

\begin{proof}
Now by assumption the function
$$
K(t)=K_{h_t}(\mu_t,\nu_t)
$$
is measurable on~$T$. Let
$$
M_t:=\biggl\{\sigma\in \Pi(\mu_t,\nu_t)\colon
\int h(t,x,y)\, \sigma(dx dy)=K(t)\biggr\}.
$$
Each set $M_t$ is compact in $\Pi(\mu_t,\nu_t)\subset \mathcal{P}(X\times Y)$,
because if measures $\sigma_n\in M_t$ converge weakly to a measure~$\sigma$,
then $\sigma\in \Pi(\mu_t,\nu_t)$ and the integral of $h_t$ against $\sigma$ cannot be larger
than~$K(t)$ by the lower semicontinuity of~$h_t$, but obviously it cannot be smaller than $K(t)$
by the definition of~$K(t)$.

By the Borel measurability of the function
$t\mapsto K(t)$ and the Borel measurability of the function
$$
(t,\sigma)\mapsto \int_{X\times Y} h(t,x,y)\, \sigma(dx dy)
$$
on $T\times \mathcal{P}(X\times Y)$, which follows by the joint measurability of~$h$
(see \cite[Theorem~5.8.4]{B18}),
the set
$$
B=\biggl\{(t,\sigma)\colon \sigma\in \mathcal{P}(X\times Y), \ \sigma\in \Pi(\mu_t,\nu_t),
\
\int h(t,x,y)\, \sigma(dx dy)=K(t)\biggr\}
$$
is Borel in $T\times \mathcal{P}(X\times Y)$ and $M_t$ is its section at~$t$.
Hence again Theorem~A applies.
\end{proof}

\begin{lemma}\label{select1}
Let $(T,\mathcal{T})$ be a measurable space, let $E$ be a
completely regular Souslin space, and
let $u_{n}\colon T\to E$ be a sequence of $\mathcal{T}$-measurable
mappings such that the sequence  $\{u_{n}(t)\}$
has compact closure
for every fixed~$t\in T$. Then there is a sequence of $\mathcal{T}$-measurable
functions $t\mapsto \eta_k(t)$ with values in $\mathbb{N}$ such that, for every~$t$,
the numbers  $\eta_k(t)$ increase to infinity and the sequence $\{u_{\eta_k(t)}(t)\}$ converges
to some point $u(t)$ such that the mapping $t\mapsto u(t)$ is $\mathcal{T}$-measurable.
\end{lemma}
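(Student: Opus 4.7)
The plan is to reduce to a compact metric setting via a countable separating family of bounded continuous functions, and then extract a convergent subsequence measurably by successive refinement of finite covers.

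First, since $E$ is a completely regular Souslin space, fix a sequence $\{f_j\}\subset C_b(E)$ separating the points of $E$ with $|f_j|\le 1$, and define the continuous injection
$$
F\colon E\to M:=[-1,1]^{\mathbb{N}},\qquad F(x)=(f_j(x))_{j\ge 1},
$$
where $M$ is equipped with its product metric, making it a compact metric space. For every $t\in T$ the closure $K_t$ of $\{u_n(t)\colon n\in\mathbb{N}\}$ is compact in $E$, so $F|_{K_t}$ is a continuous injection from a compact Hausdorff space into a Hausdorff space, hence a homeomorphism onto the compact (hence closed) subset $F(K_t)\subset M$. Set $v_n(t):=F(u_n(t))$; each $v_n\colon T\to M$ is $\mathcal{T}$-measurable.

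Second, I extract a convergent subsequence measurably inside $M$. For each $k\ge 1$, cover $M$ by finitely many closed balls $B^k_1,\dots,B^k_{N_k}$ of radius $2^{-k}$. Set $S_0(t):=\mathbb{N}$ and inductively define $i_k(t)$ to be the least index $i$ for which the set $S_{k-1}(t)\cap\{n\colon v_n(t)\in B^k_i\}$ is infinite, and then $S_k(t):=S_{k-1}(t)\cap\{n\colon v_n(t)\in B^k_{i_k(t)}\}$. The index $i_k(t)$ exists because $S_{k-1}(t)$ is infinite and the cover is finite; its measurability follows by induction, since $\{t\colon n\in S_{k-1}(t)\}$ and $\{t\colon v_n(t)\in B^k_i\}$ are $\mathcal{T}$-measurable and the condition ``infinitely many such $n$'' amounts to a countable intersection and union of these. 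Setting $\eta_1(t):=\min S_1(t)$ and $\eta_{k+1}(t):=\min\{n\in S_{k+1}(t)\colon n>\eta_k(t)\}$ yields $\mathcal{T}$-measurable functions with $\eta_k(t)$ strictly increasing to infinity. Since for every $K\ge 1$ the tail $\{v_{\eta_k(t)}(t)\colon k\ge K\}$ lies in the ball $B^K_{i_K(t)}$ of radius $2^{-K}$, the sequence $\{v_{\eta_k(t)}(t)\}_k$ is Cauchy in $M$ and converges to a limit $v(t)\in M$, which is $\mathcal{T}$-measurable as a pointwise limit of $\mathcal{T}$-measurable mappings.

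Third, I lift the result back to $E$. Since every $v_{\eta_k(t)}(t)\in F(K_t)$ and $F(K_t)$ is closed in $M$, one has $v(t)\in F(K_t)$; define
$$
u(t):=\bigl(F|_{K_t}\bigr)^{-1}(v(t))\in K_t\subset E.
$$
Because $F|_{K_t}$ is a homeomorphism onto $F(K_t)$, the convergence $v_{\eta_k(t)}(t)\to v(t)$ in $M$ lifts to $u_{\eta_k(t)}(t)\to u(t)$ in $E$. Finally, $f_j(u(t))=v(t)_j$ is $\mathcal{T}$-measurable for every $j$, and since $\{f_j\}$ is a countable family of Borel functions separating the points of the Souslin space $E$, it generates $\mathcal{B}(E)$ (as recalled from \cite[Theorem~6.8.9]{B07} in the excerpt), so $u\colon T\to E$ is $\mathcal{T}$-measurable.

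The main obstacle is organizing the subsequence extraction so that every choice is made canonically from $\mathcal{T}$-measurable data; complete regularity together with the Souslin property is precisely what permits the injective reduction to a compact metric space, after which the finite-cover refinement provides a fully explicit, measurable Bolzano--Weierstrass procedure.
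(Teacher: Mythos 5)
Your proof is correct. The overall strategy coincides with the paper's: both reduce to a compact metrizable cube via a countable separating family of bounded continuous functions, extract a convergent subsequence by a measurable Bolzano--Weierstrass procedure, and then use the compactness of the closure of $\{u_n(t)\}$ in $E$ (on which the cube topology and the original topology agree) to return to~$E$. The difference lies in the extraction mechanism. The paper works coordinatewise: it forms $L_j(t)=\limsup_n u_n^j(t)$, inductively selects nested subsequences along which the $j$th coordinate converges to $L_j(t)$, and finishes with a diagonal sequence, arguing that coordinatewise convergence forbids distinct limit points while compactness guarantees at least one. You instead run a nested finite-cover refinement in the cube, at each stage choosing the least-indexed ball of radius $2^{-k}$ meeting the surviving index set in an infinite set; this produces a sequence that is outright Cauchy in the product metric, so no diagonal argument or limit-point dichotomy is needed, and the limit $v(t)$ is measurable simply as a pointwise limit. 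Your route is marginally more self-contained at the final step (the explicit inverse of the homeomorphism $F|_{K_t}$ and the identity $f_j(u(t))=v(t)_j$ make the measurability of $u$ transparent, a point the paper leaves implicit), while the paper's limsup-chasing has the mild advantage of identifying the limit coordinates explicitly as the measurable functions $L_j(t)$. Both verifications of the measurability of the selection indices are routine and sound.
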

\begin{proof}
There is a continuous injection of $E$ into $[0,1]^\infty$,
so we can consider $E$ as a set in $[0,1]^\infty$ with a stronger Souslin topology.
Points of $[0,1]^\infty$ will be written as $x=(x^1,x^2,\ldots)$.
It suffices to pick increasing numbers $\eta_k(t)$ measurably
in~$t$ in such a way that, for each~$j$ and~$t$,
 the sequence of numbers $u_{\eta_k(t)}^j(t)$ will converge.
 Indeed, this convergence implies that the sequence $\{u_{k(t)}(t)\}$ cannot have
 different limit points, but by the compactness of the closure this sequence must
 have limit points, so it follows that the whole
 sequence converges.

 We construct $\eta_k(t)$ inductively. By the measurability of $u_{n}$
 the functions
 $$
 L_j(t)=\limsup_{n\to\infty} u_{n}^j(t)
 $$
 are $\mathcal{T}$-measurable.
  Let $\eta_1^1(t)$ be the minimal number $n$ such that
 $$
|u_{n}^1(t)- L_1(t)|<1.
 $$
 This number measurably depends on $t$, because
  \begin{align*}
 &\biggl\{t\in T\colon \eta_1^1(t)=m\biggr\}
 \\
 &=\biggl\{t\colon
 |u_{n}^1(t)- L_1(t)|\ge 1, n=1,\ldots, m-1,
 |u_{m}^1(t)- L_1(t)|<1\biggr\}.
 \end{align*}
 Assuming that $\eta_k^1(t)$ is already defined and $\mathcal{T}$-measurable, we take for
 $\eta_{k+1}^1(t)$ the minimal number $n$ such that $n>\eta_k^1(t)$ and
 $$
|u_{n}^1(t)- L_1(t)|<\frac{1}{k+1}.
 $$
 As above, the function $\eta_{k+1}^1$ is $\mathcal{T}$-measurable. It follows that
 the first coordinates of $u_{\eta_k^1(t)}$ converge to~$L_1(t)$.

 The second step is to pick a subsequence in $\{\eta_k^1(t)\}$ for which the second coordinates
 will converge to~$L_2(t)$. To this end, we take for $\eta_1^2(t)$
 the minimal number $n>\eta_1^1(t)$ among the numbers $\eta_{k}^1(t)$ such that
 $$
|u_{n}^2(t)- L_2(t)|<1.
 $$
 We have
 \begin{align*}
 &
 \{t\in T\colon \eta_1^2(t)=m\}
 \\
 &=\{t\colon
 |u_{\eta_1^n(t)}^2(t)- L_2(t)|\ge 1, n=1,\ldots, m-1,
 |u_{\eta^1_m(t)}^2(t)- L_2(t)|<1\},
 \end{align*}
 which shows that $\eta_1^2$ is $\mathcal{T}$-measurable. We proceed inductively and find
 $\mathcal{T}$-measurable functions $\eta_{k}^2$ such that $\eta_k^2(t)$ is the minimal
 number in $\{\eta_k^1(t)\}$ for which the difference between $L_2(t)$ and the second
coordinate of $u_{\eta_n^1(t)}(t)$ becomes less than~$1/k$.

 We continue this process inductively and obtain embedded subsequence $\{\eta_k^m(t)\}$
 such that the functions $\eta_k^m$ are $\mathcal{T}$-measurable and the $m$th coordinates of
  $u_{\eta_k^m(t)}(t)$ converge to~$L_m(t)$. For the diagonal sequence
 $\eta_k^k(t)$ we have convergence of all coordinates, which proves convergence
 of~$u_{\eta_k^k(t)}(t)$.
\end{proof}

\begin{corollary}\label{c-select1}
Let $(T,\mathcal{T})$ be a measurable space, let $X$ be a
completely regular Souslin space, and
let $t\mapsto \mu_{t,n}$, $T\to \mathcal{M}(X)$ be a sequence of $\mathcal{T}$-measurable
mappings such that the sequence of measures $\{\mu_{t,n}\}$
has weakly compact closure {\rm(}for example, is uniformly tight{\rm)}
for every fixed~$t\in T$. Then there is a sequence of $\mathcal{T}$-measurable
functions $t\mapsto \eta_k(t)$ with values in $\mathbb{N}$ such that, for every~$t$,
the numbers  $\eta_k(t)$ increase to infinity and the sequence of measures $\mu_{t,\eta_k(t)}$ converges
to some measure $\mu_t$ such that $t\mapsto \mu_t$ is $\mathcal{T}$-measurable.
\end{corollary}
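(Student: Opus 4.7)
The plan is to apply Lemma~\ref{select1} directly, taking $E=\mathcal{M}(X)$ equipped with its weak topology and $u_n(t):=\mu_{t,n}$. First I would verify that this choice of $E$ meets the space hypothesis of the lemma: since $X$ is a completely regular Souslin space, so is $\mathcal{M}(X)$ with the weak topology, as recalled in Section~2 (following \cite[Chapter~8]{B07}). In particular, the Borel structure on $\mathcal{M}(X)$ with respect to which $t\mapsto \mu_{t,n}$ is measurable agrees with the one used in Lemma~\ref{select1}.

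Next, I would match hypotheses. The $\mathcal{T}$-measurability of each $u_n \colon t\mapsto \mu_{t,n}$ is given. The compact closure hypothesis on $\{u_n(t)\}$ in $E$ is exactly the assumed weak compactness of the closure of $\{\mu_{t,n}\}$ in $\mathcal{M}(X)$ for each fixed~$t$; in the special case where uniform tightness is assumed in place of weak compactness, the closure is automatically weakly compact by Prohorov's condition as stated in Section~2 (recall that a uniformly tight family bounded in variation is relatively weakly compact). So both hypotheses of Lemma~\ref{select1} are in place.

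Applying the lemma then yields a sequence of $\mathcal{T}$-measurable functions $\eta_k\colon T\to \mathbb{N}$ with $\eta_k(t)$ increasing to infinity for every~$t$, together with a $\mathcal{T}$-measurable mapping $t\mapsto \mu_t\in \mathcal{M}(X)$ such that $\mu_{t,\eta_k(t)}\to \mu_t$ weakly for each~$t$, which is exactly the conclusion of the corollary.

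There is no substantive obstacle here: the corollary is essentially a direct translation of Lemma~\ref{select1} into the language of measures, and the only point needing comment is the Souslin property of $\mathcal{M}(X)$, which is standard for completely regular Souslin~$X$. The parenthetical remark about uniform tightness is handled via Prohorov's theorem, as noted above.
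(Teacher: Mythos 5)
Your proposal is correct and coincides with the paper's own proof, which likewise applies Lemma~\ref{select1} with $E=\mathcal{M}(X)$ in the weak topology, noting that this space is again a completely regular Souslin space. The extra remarks on Prohorov's condition for the uniformly tight case are accurate and consistent with what is recalled in Section~2.
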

\begin{proof}
The previous lemma applies, since the space of measures on $X$ with the weak topology is also
Souslin.
\end{proof}

\begin{proof}[Proof of Theorem~\ref{tmain1}]
By Corollary~\ref{c-select1} for completing the proof of Theorem \ref{tmain1}
it suffices to find approximate $\mathcal{T}$-measurable
solutions $\sigma_{t,n}$ with
$I_h(\sigma_{t,n})\to K(t)$ for each~$t$. To this end, we find
$\mathcal{T}$-measurable
solutions $\pi_{t,n}$ for bounded Lipschitz cost functions $h_n$ increasing to $h$
and constructed according to Remark~\ref{rem-appr}.
Therefore, the general case reduces to the case in which
every function $h_t$ is bounded by $1$ and Lipschitz with constant~$1$.
Moreover, by Lemma~\ref{lem1} and Lemma~\ref{lem2} it suffices to consider the case
in which the measures $\mu_t$ and $\nu_t$ have compact supports, so that for each $t$
there is a compact set $S_t$ on which all measures from $\Pi(\mu_t,\nu_t)$ are
concentrated and $S_t$ depends on $t$ measurably.

Let us consider the space $\mathcal{K}(X\times Y)$ of nonempty compact subsets
of $X\times Y$ with the Hausdorff distance $d_H$ introduced in~Section~2.
This space is separable,
hence there is a sequence of compacts sets $Q_j$ dense in the union of~$S_t$.
Let fix $n$  and consider the sets
$$
T_j=\{t\in T\colon {\rm dist}_H(S_t,Q_j)\le 1/n\}.
$$
Note that $T_j\in \mathcal{T}$ (this follows from the proof of Lemma~\ref{lem1}).
The set of $1$-Lipschitz functions on $Q_j$ with values in $[0,1]$ is compact
in the sup-norm, hence there is a sequence $h_{j,m}$ dense in it. Each function
$h_{j,m}$ has an extension (denoted by the same symbol) to all of~$X\times Y$ with
values in $[0,1]$ and $1$-Lipschitz.

We further define the sets
$$
T_{j,m}=\{t\in D_j\colon \sup_{(x,y)\in Q_j} |h_t(x,y)-h_{j,m}(x,y)|\le 1/n\}.
$$
The supremum can be taken over a countable set dense in $Q_j$, hence $T_{j,m}\in \mathcal{T}$.
Using these sets we obtain a partition of $T$
into nonempty disjoint sets $D_k\in \mathcal{T}$
with the following property: for each $D_k$ there are numbers $j$ and $m$
such that ${\rm dist}_H(S_t,Q_j)\le 1/n$ and $\sup_{(x,y)\in Q_j}
|h_t(x,y)-h_{j,m}(x,y)|\le 1/n$ for all $t\in D_k$.
In every set $D_k$ take a point $t_k$.
The cost function $h_{t_k}$ differs from any other cost function $h_t$ with $t\in D_k$
by at most~$3/n$ on the set~$S_t$. Indeed, if $(x,y)\in S_t$, then
we can find $(u,v)\in Q_j$ with $d((x,y),(u,v))\le 1/n$.
Since on $Q_j$ the functions $h_t$ and $h_{t_k}$ differ by at most~$1/n$,
we have
\begin{multline*}
|h_t(x,y)-h_{t_k}(x,y)|
\\
\le
|h_t(x,y)-h_t(u,v)|+|h_t(u,v)-h_{t_k}(u,v)|+|h_{t_k}(u,v)-h_{t_k}(x,y)|
\le 3n^{-1}.
\end{multline*}
Finally, on each $D_k$ we solve the Kantorovich problem with the cost function $h_{t_k}$
independent of $t$ and the original marginals. Hence there is a solution $\pi_t^k\in \Pi(\mu_t,\nu_t)$
that is $\mathcal{T}$-measurable. Clearly,
$|K_{h_t}(\mu_t,\nu_t)-K_{h_k}(\mu_t,\nu_t)|\le 3/n$ for all $t\in D_k$.
Therefore, on all of $T$ we obtain the desired approximation.
\end{proof}

\begin{proof}[Proof of Theorem~\ref{tmain2}]
By Lemma~\ref{lastlem} it suffices to prove the Borel measurability of the transportation
cost~$K_t=K_{h_t}(\mu_t,\nu_t)$.
Using Lemma~\ref{lem-lim} and the truncations
$\min(h_t,N)$ we can pass to uniformly bounded cost functions. So we can assume that $h_t<1$.
Lemma~\ref{lem2} reduces the assertion to the case of measures $\mu_t^n$ and
$\nu_t^n$ with compact supports $Z_1^n(t)$ and $Z_2^n(t)$.
The value of the cost does not change if we redefine $h_t$ outside
$Z_1^n(t)\times Z_2^n(t)$ by the value~$2$. Since the set
 $\bigcup_t (\{t\}\times Z_1^n(t)\times Z_2^n(t))$
belongs to $\mathcal{T}\otimes \mathcal{B}(X)\otimes \mathcal{B}(Y)$ by Lemma~\ref{lem1},
this new  cost function is Borel. It is readily seen that it is lower semicontinuous.

Now we are in the situation of Lemma~\ref{lem-appr}. Therefore,
Lemma~\ref{lem-lim} further reduces everything to continuous
cost functions. This case is covered by the first (and easy)
part of Theorem~\ref{tmain1}.
\end{proof}

\begin{remark}\label{rem-Zh}
\rm
(i)
As already noted in the introduction,
Zhang \cite{Zhang} proved that if cost functions $h_t$ are continuous, then
the space $M$ of nonnegative continuous cost functions can be regarded as a
parametric space and equipped with its natural Borel $\sigma$-algebra
(generated by the metric introduced above) and
the set-valued mapping $(h,\mu,\nu)\mapsto {\rm Opt}(h,\mu,\nu)$
has a Borel measurable selection.

However, it is not clear how this can be applied to the assertion
announced in \cite{Zhang} that a measurable selection exists for any
parametric measurable space~$(T,\mathcal{T})$. The point is that the mapping $t\mapsto h_t$
with values in $M$ generated by a function $h$ Borel measurable
in $t$ can fail to be measurable when $M$ is equipped with the Borel $\sigma$-algebra.
For example, this happens if $T=C_b(B\times B)$ with its sup-norm,
where $B$ is the unit ball in~$l^2$, $\mathcal{T}$ is generated by evaluation
functionals $t\mapsto t(x,y)$,
$X=Y=B$, and  $h(t,x,y)=t(x,y)$. Here $h$ is  bounded and continuous in $(x,y)$
and $\mathcal{T}$-measurable, but $t\mapsto h_t$ is not measurable
with values in $C_b(B\times B)$ equipped with the Borel $\sigma$-algebra.

To see this, let us observe that the Borel
$\sigma$-algebra of the space $C_b(\mathbb{N})$
is not countably generated, because its cardinality is greater
than that of the continuum.
Indeed, this space contains a closed discrete set of cardinality of the continuum; all subsets of this set
are also closed. It follows that $\mathcal{B}(C_b(\mathbb{N}))$
is not generated by the evaluation functionals $f\mapsto f(n)$.
The same is true for any metric space containing a discrete countable subset, hence for any
noncompact metric space. Similarly, the Borel $\sigma$-algebra
of the metric space $M=C(X\times Y)$ mentioned in the introduction is not generated
by evaluation functionals if the balls in $X\times Y$ are not compact.

However, for any compact metric space $K$ the Borel $\sigma$-algebra of the space $C_b(K)$ is
generated by the evaluation functionals $f\mapsto f(k)$,
because this space is separable and these functionals separate its points.
Hence the proof in \cite{Zhang} for general $(T,\mathcal{T})$ is correct
if $X$ is a locally compact Polish space. We have not succeeded to fix the general
case in a simple way and needed several steps.
Recall also that for lower semicontinuous cost functions
we still assume that $T$ is Souslin.

(ii)
The question also arises whether
Theorem~\ref{tmain2} extends to Souslin spaces $X$ and~$Y$. A~major problem is to extend
Lemma~\ref{lem2} to Souslin spaces~$X$. Suppose that for a lower semicontinuous $h_t$ and
Souslin spaces $X$ and $Y$ we know that there are measurable set-valued mappings
$t\mapsto Z_n(t)$ as in Lemma~\ref{lem2}. We take a bounded continuous metric $d$
on $X\times Y$ and observe that the functions
$h_k(t,x,y)=\inf \{h(t,u,v)+kd((x,y), (u,v))\colon (u,v)\in Z_n(t)\}$
increase on $Z_n(t)$ to $h(t,x,y)$, because on $Z_n(t)$ the topology of $X\times Y$
is metrizable by~$d$ by compactness. Moreover, the assumed measurability of $Z_n(t)$
ensures (for each fixed~$n$)
the existence of a sequence of measurable mappings $\xi_j\colon T\to X\times Y$
such that $\xi_j(t)\in Z_n(t)$ and $Z_n(t)$ is the closure of $\{\xi_j(t)\}$.
So the infimum defining $h_k(t,x,y)$ can be evaluated over $\{\xi_j(t)\}$, which shows
the measurability of~$h_k(t,x,y)$.

Of course, if we agree to leave the safe area of Borel measurability,
for Souslin spaces it is possible
to impose the following stronger condition on $\mu_t$ and~$\nu_t$: let these mappings
be measurable when $X$ and $Y$ are equipped with the $\sigma$-algebras
$\sigma(\mathcal{S}(X))$ and
$\sigma(\mathcal{S}(X))$. Then $K(t)$ is $\sigma(\mathcal{S}(T))$-measurable
and $\sigma_t$ can be made $\sigma(\mathcal{S}(T))$-measurable. Indeed, there are
continuous surjections $g_1\colon \mathbb{R}^\infty\to X$,
$g_2\colon \mathbb{R}^\infty\to Y$. Hence we obtain two families
$\mu_t^1=\mu_t\circ g_1^{-1}$, $\nu_t^2=\nu_t\circ g_2^{-1}$ of measures on~$\mathbb{R}^\infty$
that are $\sigma(\mathcal{S}(T))$-measurable. The obtained results apply to these measures
and the cost function $h^0(t,u,v)=h(t, g_1(u), g_2(u))$, which satisfies our hypotheses.
The corresponding transportation cost and optimal measures will be
$\sigma(\mathcal{S}(T))$-measurable. Then we take the images of optimal measures under
the mapping $(g_1,g_2)$.
\end{remark}

Closing this section we mention that similar results can be obtained
for the Kantorovich problem with density constraints studied by
Korman and McCann~\cite{KM} (see also~\cite{Doled}).
The density constraint is an additional requirement on admissible
optimal measures: in place of the set $\Pi(\mu,\nu)$ we consider its subset $\Pi^\theta(\mu,\nu)$
consisting of measures having densities with respect to a given measure $\lambda$
on $X\times Y$ bounded by a given nonnegative Borel function $\theta\in L^1(\lambda)$.
If  $\Pi^\theta(\mu,\nu)$ is not empty and the cost function is bounded and lower semicontinuous,
then the set of minimizing measures is not empty. A~straightforward modification
of the reasoning above shows that also in this case there is a measurable choice of
optimal measures depending on the parameter on which marginal measures and the cost function
depend measurably. In a separate paper we shall consider a more general situation where
the constraint $\theta$ and the reference measure $\lambda$ also depend on a parameter.

\section{The Skorohod parametrization with a parameter}

In this short section we consider another parametric problem in the same circle of ideas.
It was shown by Skorohod \cite{Sk} that for any weakly convergent sequence of
Borel probability measures $\mu_n$ on a complete separable metric space~$X$ there is a sequence
of Borel mappings $\xi_n\colon [0,1]\to X$ with $\mu_n=\lambda\circ \xi_n^{-1}$,
where $\lambda$ is Lebesgue measure,  converging almost everywhere.
This important result was generalized by Blackwell and Dubins~\cite{Black2}
and Fernique~\cite{Fernique88}, who proved that for every measure $\mu\in \mathcal{P}(X)$ there is a
Borel mapping $\xi_\mu\colon [0,1]\to X$ such that $\mu$ is the image
of Lebesgue measure $\lambda$ under~$\xi_\mu$ and measures $\mu_n$ converge weakly
to $\mu$ if and only if the mappings $\xi_{\mu_n}$ converge to $\xi_\mu$ almost everywhere.
A~topological proof of this result along with some generalizations was given in~\cite{BoKol}
(see also \cite{BBK01}, \cite{B07}, and \cite{B18} on this topic).
The purpose of this section is to verify that this topological proof actually
yields the following result.

\begin{theorem}
Let $X$ be a complete separable metric space.
For every measure $\mu\in \mathcal{P}(X)$
there is a Borel mapping $\xi_\mu\colon [0,1]\to X$
with $\mu=\lambda\circ \xi_\mu^{-1}$
such that the mapping
$(\mu,t)\mapsto \xi_\mu(t)$
is Borel measurable on $\mathcal{P}(X)\times [0,1]$ and measures
$\mu_n$ converge weakly to $\mu$ if and only if the mappings $\xi_{\mu_n}$
converge to $\xi_\mu$ almost everywhere.

Therefore, for any family
of measures $\mu_\omega\in\mathcal{P}(X)$ measurably depending on a parameter~$\omega$
from a measurable space~$(\Omega,\mathcal{A})$, the mapping
$(\omega,t)=\xi_{\mu_\omega}(t)$ with values in~$X$ is $\mathcal{A}\otimes\mathcal{B}[0,1]$-measurable.
\end{theorem}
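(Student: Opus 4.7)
The plan is to revisit the topological construction of~$\xi_\mu$ from the cited work of Bogachev--Kolesnikov and observe that, carried out in a canonical way, it produces $\xi_\mu(t)$ as a jointly Borel function of~$(\mu,t)$. Fix a refining sequence of finite Borel partitions $\{A_{n,1},\ldots,A_{n,k_n}\}$ of~$X$ with ${\rm diam}(A_{n,i}) < 2^{-n}$, arranged so that $A_{n+1,j}\subset A_{n,\phi(n+1,j)}$ for an index map $\phi$ and so that the children of each $A_{n,i}$ appear consecutively at level~$n+1$; pick representative points $x_{n,i}\in A_{n,i}$ independent of~$\mu$. For each $\mu\in\mathcal{P}(X)$, set $F_{n,i}(\mu)=\sum_{j\le i}\mu(A_{n,j})$ and partition $[0,1)$ into the (possibly empty) left-closed right-open intervals $I_{n,i}(\mu)=[F_{n,i-1}(\mu),F_{n,i}(\mu))$; define $\xi_\mu^n(t)=x_{n,i}$ when $t\in I_{n,i}(\mu)$, with $\xi_\mu^n(1)$ given by a fixed convention.

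Because of the consecutive-children ordering, $I_{n+1,j}(\mu)\subset I_{n,\phi(n+1,j)}(\mu)$, so both $\xi_\mu^n(t)$ and $\xi_\mu^{n+1}(t)$ lie in a common parent $A_{n,i}$, yielding $d(\xi_\mu^n(t),\xi_\mu^{n+1}(t))<2^{-n}$ for every~$(\mu,t)$. Hence $\{\xi_\mu^n(t)\}$ is Cauchy at every~$(\mu,t)$, and its pointwise limit $\xi_\mu(t)$ exists everywhere on $\mathcal{P}(X)\times[0,1]$. Each $F_{n,i}$ is Borel on~$\mathcal{P}(X)$ since $\mu\mapsto\mu(A_{n,j})$ is Borel, so the graph $\{(\mu,t)\colon t\in I_{n,i}(\mu)\}$ is Borel in $\mathcal{P}(X)\times[0,1]$. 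Thus each $(\mu,t)\mapsto \xi_\mu^n(t)$ is jointly Borel, and the pointwise limit $(\mu,t)\mapsto \xi_\mu(t)$ inherits joint Borel measurability via the standard representation of preimages under pointwise limits into a metric space.

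For a fixed~$\mu$, the identity $\lambda\circ \xi_\mu^{-1}=\mu$ follows from $\xi_\mu(t)\in\overline{A_{n,i}}$ whenever $t\in I_{n,i}(\mu)$, giving $\lambda(\xi_\mu^{-1}(\overline{A_{n,i}}))\ge \mu(A_{n,i})$ and, by additivity over the partition and refinement as $n\to\infty$, agreement on the Borel sets generated by $\{A_{n,i}\}$, which extends to all of~$\mathcal{B}(X)$ by the monotone class theorem. The equivalence of weak convergence $\mu_k\to\mu$ with almost-everywhere convergence $\xi_{\mu_k}\to\xi_\mu$ is the content of the topological proof in Bogachev--Kolesnikov; that argument uses precisely the construction above and requires no modification, since the canonical choice of $\xi_\mu$ made here is exactly the one analyzed there.

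The final assertion is immediate: for a measurable family $\omega\mapsto\mu_\omega$, the map $(\omega,t)\mapsto(\mu_\omega,t)$ is $\mathcal{A}\otimes\mathcal{B}([0,1])$-to-$\mathcal{B}(\mathcal{P}(X))\otimes\mathcal{B}([0,1])$-measurable, and its composition with the jointly Borel map $(\mu,t)\mapsto\xi_\mu(t)$ is $\mathcal{A}\otimes\mathcal{B}([0,1])$-measurable with values in~$X$. The main obstacle in executing this plan is the careful bookkeeping required at degenerate intervals (where $\mu(A_{n,i})=0$) and at the shared endpoints $F_{n,i}(\mu)$: the conventions must ensure that the Cauchy estimate holds at \emph{every} $(\mu,t)$, not merely for $\lambda$-almost every~$t$, since only then is the joint Borel measurability of the everywhere-defined limit automatic and no measurable selection on a $\mu$-dependent null set is needed.
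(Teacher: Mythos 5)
Your construction is genuinely different from the paper's, which defines $\xi_\mu$ on $X=[0,1]$ by the quantile formula $\xi_\mu(t)=\sup\{x\colon \mu([0,x))\le t\}$, obtains joint Borel measurability there from monotonicity and right-continuity in~$t$, and then transfers the parametrization to a general Polish space through Borel subspaces, the Cantor set, a Milyutin operator on $\mathcal{P}([0,1]^\infty)$, and a $G_\delta$-embedding. The divergence is exactly where the gap lies. A first, repairable, issue: a complete separable metric space that is not totally bounded (already $X=\mathbb{R}$) admits no \emph{finite} Borel partition into sets of diameter $<2^{-n}$, so you must use countable partitions with a tree-indexed (lexicographic) ordering of cells; with that fix your map is indeed jointly Borel and satisfies $\lambda\circ\xi_\mu^{-1}=\mu$.

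The fatal issue is the equivalence of weak convergence with almost-everywhere convergence. The direction ``a.e.\ convergence implies weak convergence'' is automatic for any parametrization, but the converse is the whole content of the Skorohod/Blackwell--Dubins/Fernique theorem, and it is \emph{not} a property of an arbitrary partition-based map: your appeal to Bogachev--Kolesnikov does not apply, because their argument analyzes the quantile construction transported by a Milyutin operator, not the map you build. Your map fails the converse: the functions $\mu\mapsto\mu(A_{n,i})$ are not weakly continuous at measures charging $\partial A_{n,i}$, and when mass crosses a cell boundary the fixed enumeration of cells can permute the intervals $I_{n,i}(\mu)$. Concretely, take $X=[0,1]$ with level-one cells enumerated as $A_{1,1}=[1/2,1)$, $A_{1,2}=[0,1/2)$, $A_{1,3}=\{1\}$, the deeper levels refining these left to right, and let $\mu=\tfrac12\delta_{1/4}+\tfrac12\delta_{1/2}$, $\mu_k=\tfrac12\delta_{1/4}+\tfrac12\delta_{1/2-1/k}$. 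Then $\mu_k\to\mu$ weakly, while $\xi_\mu$ equals $1/2$ on $[0,1/2)$ and $1/4$ on $[1/2,1)$, whereas $\xi_{\mu_k}$ converges pointwise to the function equal to $1/4$ on $[0,1/2)$ and $1/2$ on $[1/2,1)$; so $\xi_{\mu_k}\not\to\xi_\mu$ on a set of full measure. Thus your argument produces a jointly Borel measurable family with the correct image measures, but not a Skorohod parametrization; establishing both properties simultaneously is precisely why the paper routes the construction through the quantile map and Milyutin's theorem rather than through a fixed system of partitions.
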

\begin{proof}
We verify that the proof suggested in \cite{BoKol} and also presented
in \cite[\S8.5]{B07}
and \cite[\S2.6]{B18} gives the desired version. This proof is very simple. First we explicitly
define the desired mapping for the space~$X=[0,1]$:
$$
\xi_\mu(t)=\sup\{x\in [0,1]\colon \mu([0,x))\le t\}.
$$
It is shown in \cite[Theorem~2.6.4]{B18} that this is the desired parametrization.
We only need to show that $\xi_\mu(t)$ is jointly Borel measurable
on $\mathcal{P}([0,1])\times [0,1]$. Note that $\xi_\mu(t)$ is increasing and right-continuous
in~$t$.
It is known that if a function $\xi_\mu(t)$ is increasing and right-continuous in~$t$
for every fixed~$\mu$ and is Borel measurable in $\mu$ for each fixed~$t$, then it is jointly
Borel measurable. Indeed, it suffices to observe that it is the limit of the decreasing sequence
of functions $\xi_n(\mu,t)$ defined as follows: for each~$n$,
we partition $[0,1]$ into $2^n$ intervals $I_1=[0,2^{-n})$, $I_2=[2^{-n}, 2^{2-n}), \ldots,
I_{2^n}=[1-2^{-n}, 1]$ and set $\xi_n(\mu,t)=\xi_\mu(r_k)$ if $t\in I_k$ and $r_k$ is the right
end of~$I_k$.

The next step is to observe that once this theorem is established for some space~$X$,
it remains valid for every Borel subspace $E\subset X$. Indeed, every measure
$\mu\in\mathcal{P}(E)$ extends to a measure on $X$ by letting $\mu(X\backslash E)=0$.
We take a jointly Borel measurable mapping $(\mu,t)\mapsto \xi_\mu(t)$ for~$X$
and for measures concentrated on~$E$ redefine it by
$\eta_\mu(t)=\xi_\mu(t)$ if $\xi_\mu(t)\in E$
and $\eta_\mu(t)=x_0$ if $\xi_\mu(t)\not\in E$, where $x_0\in E$ is a fixed element.
Since $\xi_\mu(t)\in E$ for almost all~$t$ for $\mu$ concentrated on~$E$, we do not change
the image of Lebesgue measure. The obtained mapping is obviously Borel measurable
and gives the desired parametrization for~$\mathcal{P}(E)$.

It follows from the previous step that the theorem is true for the Cantor set~$C$.
It is known that every compact metric space is the image of $C$ under some continuous
mapping, in particular, there is a continuous surjection $h\colon C\to [0,1]^\infty$.
Then the induced mapping
$H\colon \mathcal{P}(C)\to \mathcal{P}([0,1]^\infty)$ defined by
$H(\mu)=\mu\circ h^{-1}$
is also a continuous surjection.
By the Milyutin theorem (see \cite[\S2.6]{B18} for details)
there is a continuous affine mapping
$G\colon \mathcal{P}([0,1]^\infty)\to \mathcal{P}(C)$ that is a right inverse for~$H$,
i.e., $H(G(\nu))=\nu$ for all $\nu\in \mathcal{P}([0,1]^\infty)$. Therefore,
using a jointly Borel measurable parametrization $\xi_\mu(t)$ for $\mathcal{P}(C)$ we obtain
a jointly Borel measurable parametrization $h(\xi_{G(\mu)}(t))$ for
$\mathcal{P}([0,1]^\infty)$. Hence the desired parametrization exists for every Borel
subspace in~$[0,1]^\infty$, but every Polish space is homeomorphic to a $G_\delta$-set
in~$[0,1]^\infty$, see \cite[Theorem 4.2.10, Theorem 4.3.24, Corollary 4.3.25]{Eng},
which completes the proof.
\end{proof}

\begin{remark}
\rm
A drawback of convergence almost everywhere is that there is no topology
in which convergent sequences are precisely the sequences converging almost everywhere.
For this reason it may be more convenient to consider on the space of Borel mappings from
$[0,1]$ to $X$ the semimetric of convergence in measure
defined by
$$
d_0(\xi,\eta)=\int_0^1 \min(d(\xi(t),\eta(t)),1)\, dt,
$$
where $d$ is a complete metric on~$X$. The corresponding quotient space is
also complete separable. It is clear that for the obtained parametrization
convergence of mappings in this semimetric is equivalent to weak convergence
of their laws.
Actually, this parametrization gives a homeomorphism
of the quotient space $L^0(\lambda,X)$ of $X$-valued mappings with convergence
in measure and the space~$\mathcal{P}(X)$.
\end{remark}

{\bf Acknowledgements.}
This work has been supported by the Russian Science Foundation
Grant 17-11-01058 at Lomonosov Moscow State University.
The results presented in Section~6 were obtained within the project of the second
author supported by the Foundation for the Advancement of Theoretical Physics and Mathematics ``BASIS''.
We are very grateful to Sergey Kuksin and Armen Shirikyan for inspiring
discussions and useful comments.


\begin{thebibliography}{10}

\bibitem{AY}
 G.A. Alekseev, E.V. Yurova,
On Gaussian conditional measures depending on a parameter, Theory Stoch. Processes 22 (2)
(2017), 1--7.

\bibitem{AG}
L. Ambrosio, N. Gigli,
A user's guide to optimal transport,
Lecture Notes in Math.  2062 (2013), 1--155.

\bibitem{AubinF}
J.-P. Aubin,  H. Frankowska,
Set-valued analysis.  Birkh\"auser Boston, Boston, 1990.

\bibitem{BBK01}
 T.O. Banakh, V.I. Bogachev, A.V. Kolesnikov,
Topological spaces with the strong Skorokhod property,
Georgian Math. J. 8 (2) (2001), 201--220.

\bibitem{BeigLS14}
 M. Beiglb\"ock,  C. Leonard, W. Schachermayer,
  On the duality theory for the Monge--Kantorovich transport problem,
   In:  Optimal transportation, pp.~216--265.
    London Math. Soc. Lecture Note Ser., V.~413.
     Cambridge Univ. Press, Cambridge, 2014.

\bibitem{BeigS}
 M. Beiglb\"ock, W. Schachermayer,
 Duality for Borel measurable cost functions,
 Trans. Amer. Math. Soc. 363 (8) (2011), 4203--4224.

 \bibitem{Black2}
 D. Blackwell, L.E. Dubins,
An extension of Skorohod's almost sure representation theorem,
Proc. Amer. Math. Soc. 89 (4) (1983), 691--692.

\bibitem{BR}
 D. Blackwell, C. Ryll-Nardzewski,
Non-existence of everywhere proper conditional distributions,
Ann. Math. Statist. 34 (1963), 223--225.

\bibitem{B07}
 V.I. Bogachev,
Measure Theory, vols.~1,~2, Springer, Berlin, 2007.

\bibitem{B10}
V.I. Bogachev,
Differentiable Measures and the Malliavin Calculus,
Amer. Math. Soc., Providence, Rhode Island, 2010.

\bibitem{B17}
 V.I. Bogachev, Surface measures in infinite-dimensional spaces, In:
 Measure theory in non-smooth spaces, pp.~52--97,
 Partial Differ. Equ. Meas. Theory, De Gruyter Open, Warsaw, 2017.

\bibitem{B18}
V.I. Bogachev, Weak Convergence of Measures, Amer. Math. Soc., Providence,
Rhode Island, 2018.

\bibitem{BoKol}
V.I. Bogachev, A.V. Kolesnikov,
Open mappings of probability measures and the Skorohod
representation theorem,
Teor. Veroyatn. Primen. 46 (1) (2001), 3--27 (in Russian);
English transl.:
Theory Probab. Appl. 46 (1) (2001), 20--38.

\bibitem{BK}
 V.I. Bogachev, A.V. Kolesnikov,
 The Monge--Kantorovich problem: achievements, connections, and prospects,
Uspekhi Matem. Nauk 67 (5) (2012), 3--110 (in Russian); English transl.:
Russian Math. Surveys 67 (5) (2012), 785--890.

\bibitem{BM}
V.I. Bogachev, I.I. Malofeev,  Surface measures generated by differentiable measures,
  Potential Anal. 44 (4) (2016), 767--792.

  \bibitem{CRFV}
 C. Castaing,  P. Raynaud de Fitte, M. Valadier,
Young Measures on Topological Spaces. With Applications in
Control Theory and Probability Theory, Kluwer, Dordrecht, 2004.

\bibitem{CV}
C. Castaing, M. Valadier,
Convex Analysis and Measurable Multifunctions, Lecture Notes in Math.
V.~580, Springer-Verlag, Berlin -- New York, 1977.

  \bibitem{DePR}
  J. Dedecker,  C. Prieur, P. Raynaud De Fitte,
 Parametrized Kantorovich--Rubin\v{s}tein theorem
 and application to the coupling of random variables,
 In: Dependence in probability and statistics,
 pp.~105--121, Lect. Notes Stat., V.~187, Springer, New York, 2006.

  \bibitem{Del}
C. Dellacherie,
 Un cours sur les ensembles analytiques,
In: Analytic sets, pp.~184--316. Academic Press, New York, 1980.

\bibitem{Doled}
A.N. Doledenok,  On a Kantorovich problem with a density constraint.
 Mat. Zametki 104 (1) (2018), 45--55 (in Russian);
English transl.: Math. Notes 104 (1) (2018), 39--47.

\bibitem{Eng}
P. Engelking, General Topology, Polish Sci. Publ., Warszawa, 1977.

\bibitem{Ev}
I.V. Evstigneev,
Regular conditional expectations of random variables depending on parameters,
Teor. Veroyatnost. i Primenen. 31 (3) (1986), 586--589 (in Russian);
 English transl.:  Theory Probab. Appl. 31 (3) (1987), 515--518.

\bibitem{Fernique88}
X. Fernique,
Un mod\`ele presque s\^ur pour la convergence en loi,
C.~R. Acad. Sci. Paris, S\'er.~1 306 (1988), 335--338.

\bibitem{GM}
W. Gangbo, R.J. McCann, The geometry of optimal transportation,
Acta Math. 177 (1996), 113--161.

\bibitem{HT}
P.-L. Hennequin, A. Tortrat,
Th\'eorie des Probabilit\'es et Quelques Applications, Masson et Gie, Paris, 1965.

\bibitem{HP}
 J. Hille,  D. Plachky, J. Roters,
Versions of conditional expectations depending continuously on parameters,
Math. Methods Statist.  8 (1) (1999), 99--108.

\bibitem{H}
J. Hoffmann-J{\o}rgensen,
Existence of conditional probabilities,
Math. Scand. 28 (2) (1971), 257--264.

\bibitem{H94}
J. Hoffmann-J{\o}rgensen, Probability with a View Toward Statistics, vols.~I,~II,
Chapman \& Hall, New York, 1994.

\bibitem{Kech}
 A.S. Kechris,
     Classical Descriptive Set Theory,
     Springer, Berlin -- New York, 1995.

     \bibitem{Kellerer84}
H.G. Kellerer,
Duality theorems for marginal problems, Z.~Wahrsch.
verw. Geb. 67 (4) (1984), 399--432.

 \bibitem{KM}
J. Korman, R.J. McCann,  Optimal transportation with capacity constraints,
Trans. Amer. Math. Soc. 367 (3) (2015), 1501--1521.

 \bibitem{KNS}
 S. Kuksin,   V. Nersesyan, A. Shirikyan,
Exponential mixing for a class of dissipative PDEs with bounded degenerate noise,
Arxiv 1802.03250v2.

\bibitem{Malofeev}
 I.I. Malofeev,  Measurable dependence of conditional measures on a parameter,
 Dokl. Akad. Nauk 470 (1) (2016), 13--17 (in Russian);
 English transl.: Dokl. Math. 94 (2) (2016), 493--497.

\bibitem{Pf}
J. Pfanzagl,
Parametric Statistical Theory,  Walter de Gruyter, Berlin, 1994.

 \bibitem{RR}
 S.T. Rachev, L. R{\"u}schendorf,
 Mass Transportation Problems, vols.~I,~II, Springer, New York, 1998.

\bibitem{Ram}
 D. Ramachandran,
A note on regular conditional probabilities in Doob's sense,
Annals Probab. 9 (5) (1981), 907--908.

\bibitem{Rao}
M.M. Rao,
Conditional Measures and Applications,
2nd ed. Chapman and Hall/CRC, Boca Raton, Florida, 2005.

\bibitem{Sk}
A.V. Skorohod,
Limit theorems for stochastic processes,
Teor. Veroyatn. Primen. 1 (1956), 261--290 (in Russian);
English transl.: Theory Probab. Appl. 1 (1956), 261--290.

\bibitem{Tjur}
T. Tjur,
Conditional Probability Distributions,
Lecture Notes, No.~2, Institute of Mathematical Statistics,
University of Copenhagen, Copenhagen, 1974.

\bibitem{Trumbo}
B.E. Trumbo,
Sufficient conditions for the weak
 convergence of conditional probability distributions in a metric space,
Thesis (Ph.D.) The University of Chicago, 1965.

\bibitem{V03}
 C. Villani,   Topics in Optimal Transportation,
Amer. Math. Soc., Providence, Rhode Island, 2003.

\bibitem{V}
C. Villani,  Optimal Transport, Old and New, Springer, New York, 2009.

\bibitem{Zhang}
X. Zhang,  Stochastic Monge--Kantorovich problem and its duality,
           Stochastics 85 (1) (2013), 71--84.

\end{thebibliography}
\end{document}